\documentclass[12pt, reqno]{amsart}

\usepackage{Preamble/packages}
\usepackage{Preamble/environments}
\usepackage{Preamble/commands}

\begin{document}

\title{Finite group actions on the non-minimal nilpotent orbit closures in type A}

	\author[G. Bellamy]{Gwyn Bellamy}

    \author[M. Feigin]{Misha Feigin}

    \author[P. Shlykov]{Pavel Shlykov}

	\address{School of Mathematics and Statistics, University of Glasgow, University Place,
		Glasgow, G12 8QQ.}
	\email{gwyn.bellamy@glasgow.ac.uk}
        \email{misha.feigin@glasgow.ac.uk}
            \email{pavel.shlykov@glasgow.ac.uk}

	\begin{abstract}
We study quotients of the closures of the nilpotent orbits of type A by finite groups of inner automorphisms. Assuming the nilpotent orbit is not minimal, we show that the $\Q$-factorial terminalizations of quotients of nilpotent orbit closures are precisely the quotients of the symplectic resolutions of the orbit closures. This identifies the Mori fan of each $\Q$-factorial terminalization of the quotient with the Mori fan of a symplectic resolution of the orbit closure. 


	\end{abstract}

    \maketitle

\tableofcontents

  \section{Introduction} \label{sec:1.Introduction}
  Nilpotent orbit closures and their normalizations are key examples of conic symplectic singularities and play a central role in geometric representation theory, for instance in the (generalized) Springer correspondence. Beginning with Kraft--Procesi's seminal work on normality of orbit closures in classical type \cite{KPMinimal,KraftProcesi1979,KP}, a great deal of work has been done to understand the geometry \cite{FuJuteauLevySommers} of nilpotent orbit closures and their $\Q$-factorial terminalizations \cite{FuNilpotentResolutions, NamikawaInducedOrbits,FuNipQfact,NamikawaNilBirationalDuke}. Another large class of conic symplectic singularities is given by finite group quotients. One can also try to unite these two classes and consider quotients of these orbit closures by finite groups. This was done, for instance, in \cite{juteau2023minimalspecialdegenerationsduality,fu_local_geometry_special_pieces_2024} where quotients by finite groups of outer automorphisms are considered in order to prove Lusztig's conjecture on special pieces in exceptional Lie algebras. In this paper, we consider instead quotients by finite groups of inner automorphisms. We concentrate on the simplest non-trivial case, that of nilpotent orbit closures of type $\mathsf{A}$. Already this case is quite involved. 

\subsection*{Main result}

Recall that the nilpotent $G := GL(n)$-orbits $\mc{O}$ in $\mf{sl}_n$ are labelled by partitions of $n$, corresponding to the Jordan type of the matrices in the orbit. 
Their closures $\overline{\mc{O}}$ are normal and have symplectic singularities. 
We pick a finite group $\Gamma \subset G$ that intersects the centre of $G$ trivially and consider the quotient $\overline{\mc{O}}/\Gamma$. This is again a conic symplectic singularity. 
As shown by Fu \cite{FuNilpotentResolutions}, all projective symplectic resolutions of $\overline{\mc{O}}$ are of the form $\rho \colon T^* (G/P) \to \overline{\mc{O}}$ for an appropriate choice of parabolic subgroup $P \subset G$. 
Since $G$ acts on $T^* (G/P)$ such that the resolution $\rho$ is $G$-equivariant, we can consider the induced morphism $(T^* (G/P)) / \Gamma \to \overline{\mc{O}}/\Gamma$. Our main result is the following:

\begin{theorem}\label{thm:mainintro}
    Assume $\mc{O}$ is not the minimal nilpotent orbit $\mc{O}_{\mr{min}}$ (and not the zero orbit). Then taking quotients
    \[
    ( X \to \overline{\mc{O}} ) \mapsto (X / \Gamma \to \overline{\mc{O}}/\Gamma )
    \]
    defines a bijection between the (finitely many) isomorphism classes of projective symplectic resolutions of $\overline{\mc{O}}$ and the isomorphism classes of $\Q$-factorial terminalizations of $\overline{\mc{O}}/\Gamma$. 
\end{theorem}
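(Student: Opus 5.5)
The plan has three steps: show each quotient $X/\Gamma$ is a $\Q$-factorial terminalization, show every $\Q$-factorial terminalization arises this way, and show the assignment is injective. Throughout, let $X = T^*(G/P)$ with $\rho\colon X \to \overline{\mc{O}}$ a projective symplectic resolution.

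\textbf{Step 1: $X/\Gamma \to \overline{\mc{O}}/\Gamma$ is a $\Q$-factorial terminalization.} The map is projective and birational, and $X/\Gamma$ has symplectic singularities. Quotients of smooth varieties by finite groups are $\Q$-factorial, so the issue is terminality. For a symplectic quotient singularity, terminality is equivalent to codimension $\geq 4$ of the singular locus. By Verbitsky's criterion, the singular locus of $X/\Gamma$ is the image of the fixed loci $X^\gamma$ for $1 \neq \gamma \in \Gamma$, and we need $\operatorname{codim} X^\gamma \geq 4$, that is, no symplectic reflections. I would check this directly. Since $\gamma$ is a non-central semisimple element, $X^\gamma$ is a disjoint union of cotangent bundles $T^*(L/(L\cap gPg^{-1}))$, where $L = Z_G(\gamma)$ is a Levi subgroup $\prod GL(n_i)$. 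This gives
\[
\operatorname{codim} X^\gamma = 2\bigl(\dim G/P - \dim L/(L\cap gPg^{-1})\bigr).
\]
A codimension-$2$ component would force $\dim G/P - \dim L/L\cap {}^gP = 1$. Using the partition description of $P$ (block sizes given by the dual partition of $\mc{O}$), one should be able to show this happens only when $G/P = \mathbb{P}^{n-1}$ (or its dual) and $L = GL(n-1)\times GL(1)$. That case is exactly $\mc{O} = \mc{O}_{\min}$, so it is excluded by hypothesis. This combinatorial codimension count is where the non-minimality hypothesis enters, and I expect it to be the main technical obstacle. One must handle fixed components in all $P$-orbit types and the case where $\gamma$ has several eigenvalue blocks.

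\textbf{Step 2: every $\Q$-factorial terminalization arises.} Here I would compare Mori fans (equivalently, movable cones). For conic symplectic singularities, Namikawa's theory says the $\Q$-factorial terminalizations of $Y = \overline{\mc{O}}/\Gamma$ correspond bijectively to the chambers of $\operatorname{Mov}(Z) \subset \operatorname{Pic}(Z)_\R$ cut out by a Weyl-group-type hyperplane arrangement, for any one terminalization $Z$. Take $Z = X/\Gamma$. Because $\Gamma$ acts without codimension-one fixed loci (Step 1), $\operatorname{Pic}(Z)_\Q = \operatorname{Pic}(X)^\Gamma_\Q = \operatorname{Pic}(X)_\Q$; the last equality holds because $\Gamma \subset G$ is connected-group-induced and hence acts trivially on $\operatorname{Pic}(X) = X^*(P)$. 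Nef and movable cones are preserved under finite quotient pullback, since ampleness descends and ascends. Moreover, flops of $X$ ($G$-equivariant, given by changing $P$ to another parabolic with the same Levi) descend to flops of $Z$. Hence the chambers of $\operatorname{Mov}(Z)$ are exactly the images of the ample cones of the various $T^*(G/P')$, so every terminalization of $Y$ is $T^*(G/P')/\Gamma$.

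\textbf{Step 3: injectivity.} Non-isomorphic resolutions correspond to distinct chambers of $\operatorname{Mov}(X)$, and these remain distinct chambers in $\operatorname{Pic}(Z)_\R$ under the identification above. Therefore the quotients are non-isomorphic over $Y$, and the map in the theorem is a bijection.
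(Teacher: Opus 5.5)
\textbf{Step 1.} Your approach here is the paper's. The identity $\operatorname{codim} X^\gamma = 2\operatorname{codim}_{G/P}(G/P)^\gamma$ is the paper's doubling-dimension formula. At a fixed point, conjugate so that $\gamma$ is diagonal in a basis adapted to the standard flag. Then $\operatorname{codim}_{G/P}(G/P)^\gamma$ is the number $N$ of pairs $(i,j)$ with $i$ in a later block than $j$ and $\xi_i\neq\xi_j$.

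However, you leave unproved the claim that $N=1$ forces $G/P\cong\mathbb{P}^{n-1}$. This is the only place where non-minimality enters, so as written the argument has a hole at its core. It closes in a few lines. Let the blocks be $B_1,\dots,B_k$ with $k\geq 2$.
\begin{itemize}
\item If $N=0$, indices in different blocks have equal eigenvalues, and indices in the same block are linked through an index of another block. So $\gamma$ is scalar, which is excluded since $\Gamma$ meets the centre trivially.
\item If $N=1$, let $\{i_0,j_0\}$ be the unique unequal pair. An index $m$ in a third block would give $\xi_m=\xi_{i_0}$ and $\xi_m=\xi_{j_0}$, so $k=2$. If both blocks had at least two elements, choose $i\neq i_0$ in the block of $i_0$ and $j\neq j_0$ in the block of $j_0$; this gives $\xi_{i_0}=\xi_j=\xi_i=\xi_{j_0}$, a contradiction.
\end{itemize}
Hence the composition is $(1,n-1)$ or $(n-1,1)$, i.e.\ $\lambda=(2,1^{n-2})$. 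This is shorter than the paper's reduction to maximal parabolics with the separate cases $(1,k,1)$ and $n=3$.

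\textbf{Steps 2--3.} Here you take a different route. The paper proves a general statement, \cref{thm:pistarmorifan}. The Cox ring of $X/\Gamma$ is the $\Gamma$-invariant part of that of $X$, the GIT fans of $V$ and $V/\Gamma$ coincide (\cref{prop:GITfansthesame}), and the linearization maps commute with $\pi^*$. So the Mori fans agree, and the small $\Q$-factorial modifications of $X/\Gamma$ are exactly the $X_i/\Gamma$. \Cref{lem:Qfactorialterminaliffsmallmodification} then identifies these with the $\Q$-factorial terminalizations of $\overline{\mc{O}}/\Gamma$.

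You instead use the explicit list of small modifications $T^*(G/P')$ of $X$, descend the $G$-equivariant flops, and compare nef cones chamber by chamber. That works and avoids the Cox ring and GIT machinery. The cost is that it needs the small modifications of $X$ explicitly, together with compatible $\Gamma$-actions. The paper's argument applies to any $\Q$-factorial terminalization of a conic symplectic singularity with a connected group action, and it matches the whole Mori fan, hence all crepant partial resolutions.

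To complete your version, three points need attention.
\begin{itemize}
\item Check that the strict-transform identifications of rational Picard groups commute with the quotient pullbacks. This holds because the flops are isomorphisms between $G$-stable open sets whose complements have codimension at least two.
\item Ampleness ascending and descending under finite maps only controls nef cones; for movable cones you need base loci. The inclusion $\pi^*\mathrm{Mov}(X/\Gamma)\subseteq\mathrm{Mov}(X)$ is all your argument uses. It holds because pulled-back invariant sections have base locus contained in $\pi^{-1}$ of the original base locus, and $\pi$ is finite.
\item The identification $\operatorname{Pic}(X/\Gamma)_{\Q}\cong\operatorname{Pic}(X)^{\Gamma}_{\Q}$ holds for every finite group action. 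It does not depend on the absence of codimension-one fixed loci, as you suggest.
\end{itemize}
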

Here, two resolutions (respectively, $\Q$-factorial terminalisations) $X \to Y, X' \to Y$ of $Y$ are isomorphic if there is an isomorphism $X \iso X'$ commuting with the morphisms to the base $Y$. 
In particular, \cref{thm:mainintro} implies that $\overline{\mc{O}}/\Gamma$ does not admit a projective symplectic resolution when the group $\Gamma$ is non-trivial, as $X/\Gamma$ is not smooth in this case, as we explain before \cref{thm:non-minimal_quotients_are_q-factorial_and_terminal}. Moreover, we show in \cref{cor:pistarmorifannip} that if $X = T^* (G/P)$ resolves $\overline{\mc{O}}$ and $\pi \colon X \to X / \Gamma$ is the quotient morphism then there is an isomorphism 
\[
\pi^* \colon \mr{Pic}(X/\Gamma) \otimes_{\Z} \Q \iso \mr{Pic}(X) \otimes_{\Z} \Q, 
\]
which identifies the Mori fan of $X/\Gamma$ with the Mori fan of $X$.  In particular, as in Remark~\ref{rem:Morifan}, this implies that every crepant projective partial resolution of $\overline{\mc{O}}/\Gamma$ is of the form $X / \Gamma \to \overline{\mc{O}}/\Gamma$ for a unique crepant partial resolution $X \to \overline{\mc{O}}$.

As we show in our companion paper \cite{BFS_minimal_nilpotent}, Theorem~\ref{thm:mainintro} does not necessarily hold for the minimal nilpotent orbit $\mc{O}_{\mr{min}}$. We consider in detail the quotient $\overline{\mc{O}}_{\mr{min}}/\s_n$, where $\s_n$ is realised as permutation matrices in $G$. In this case, the projective symplectic resolutions are of the form $T^* \mathbb{P}^{n-1} \to \overline{\mc{O}}_{\mr{min}}$ and the crepant partial resolutions $(T^* \mathbb{P}^{n-1}) / \s_n$ do not have terminal singularities. Moreover, $\overline{\mc{O}}_{\mr{min}}/\s_n$ has many more $\Q$-factorial terminalizations than $\overline{\mc{O}}_{\mr{min}}$ has projective symplectic resolutions (there are only two of the latter, barring the case $n=2$ which we consider separately).  In \cite{BFS_minimal_nilpotent} we construct all $\Q$-factorial terminalizations of $\overline{\mc{O}}_{\mr{min}}/\s_n$ via variation of Geometric Invariant Theory (GIT). 

One can ask to what extent our results generalize to nilpotent orbit closures of other types. The theory we develop is completely general. But we require that the complement to the locus in $G/P$ where $\Gamma$ acts freely has codimension at least two. We show that this is true for all nontrivial parabolics in type $\mathsf{A}$ \textit{except} for the two maximal parabolics giving rise to $\mathbb{P}^{n-1}$. We have not analysed the extent to which it is also true in other types -- where it also holds, then our results will also be true.

\subsection*{AI declaration}
 ChatGPT $5.6$ Sol and $6.0$ Astra were used for proofreading, as well as for helping us locate several citations. The proofs of \cref{lem:Qfactorialterminaliffsmallmodification}, \cref{lem:functorial-resolution-codim-two} and \cref{prop:Gactionlifts} were proposed by ChatGPT 
 $5.6$ Sol. The authors bear full responsibility for the contents of this paper.

\subsection*{Acknowledgements}

The first author was partially supported by a Research Project Grant from the Leverhulme Trust and by the Engineering and Physical Sciences Research Council [grant numbers EP/W013053/1, EP/R034826/1]. 
The second and third authors were supported by the Engineering and Physical Sciences Research Council [grant number EP/W013053/1].


  \section{Preliminaries} \label{sec:2.Preliminaries}

Denote by $\Gm$  the algebraic group $\Cs$. 
If $H$ is an abelian group then $H_{\Q} := H \otimes_{\Z} \Q$. If $G$ is a group then $\mathbb{X}^{\bullet}(G)$ is the group of linear characters $G \to \Gm$.
For a group $G$ acting on a variety $M$ we denote by $M^g$ the set of points in $M$  fixed under the action of an element $g \in G$. 
\subsection{Algebraic geometry definitions}

Throughout we consider separated schemes of finite type over $\C$. A \textit{variety} will mean a reduced and irreducible scheme. Moreover, we assume throughout that our varieties are quasi-projective and have canonical singularities. In particular, if $\Gamma$ is a finite group acting on a variety $X$ then this assumption ensures that $X$ admits a cover by $\Gamma$-stable affine open subsets and hence the quotient $X/\Gamma$ exists (and is again quasi-projective) \cite[Lemma~1.3]{BertinNotes}. Let $K(X)$ be the field of rational functions on  $X$ and denote by $K(X)^{\times}$ the set $K(X)\setminus \{0\}$. Denote by $\mathcal{O}_X$ the sheaf of regular functions on $X$, and let $\mathcal{O}_X^\times$ be the subsheaf of units in $\mathcal{O}_X$. 

Assume that $X$ is a normal variety. We recall that $\mathrm{Div}(X)$ denotes the free $\Z$-module generated by all irreducible subvarieties (prime Weil divisors) of codimension one in $X$ and $\mathrm{Cl}(X)$ is the quotient of $\mathrm{Div}(X)$ by the subgroup generated by all principal (Weil) divisors. We denote by $\mathrm{Pic}(X)$ the Picard group of $X$, whose elements are isomorphism classes of line bundles and addition is given by tensor product of line bundles. The following is standard:

\begin{lemma}\label{lem:PicClnormalembedding}
    If $X$ is a normal variety then the map $\mathrm{Pic} (X) \rightarrow \mathrm{Cl}(X)$ is injective. 
\end{lemma}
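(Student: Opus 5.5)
We first make the map $\mathrm{Pic}(X) \to \mathrm{Cl}(X)$ explicit, and then show its kernel is trivial. Since $X$ is normal, every local ring $\mathcal{O}_{X,x}$ is integrally closed. So for each prime divisor $D$ the local ring $\mathcal{O}_{X,\eta_D}$ at the generic point $\eta_D$ of $D$ is a discrete valuation ring, with valuation $v_D \colon K(X)^\times \to \Z$. Let $L$ be a line bundle. Choose an open cover $\{U_i\}$ on which $L$ is trivialised, with transition functions $g_{ij} \in \mathcal{O}_X^\times(U_i \cap U_j)$. A nonzero rational section $s$ of $L$ is then given by local functions $f_i \in K(X)^\times$ satisfying $f_i = g_{ij} f_j$. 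We put $\mathrm{div}(s) := \sum_D v_D(f_i) D$, where for each $D$ we use any $i$ with $U_i \cap D \neq \emptyset$. This is well defined because $v_D(g_{ij}) = 0$ for units. Changing $s$ to $hs$ with $h \in K(X)^\times$ changes $\mathrm{div}(s)$ by the principal divisor $\mathrm{div}(h)$. Isomorphic line bundles give the same class, and tensor products go to sums. Hence $L \mapsto [\mathrm{div}(s)]$ is a well-defined group homomorphism $\mathrm{Pic}(X) \to \mathrm{Cl}(X)$.

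For injectivity, suppose $[\mathrm{div}(s)] = 0$. Then $\mathrm{div}(s) = \mathrm{div}(h)$ for some $h \in K(X)^\times$. Replacing $s$ by $h^{-1}s$, we may assume $\mathrm{div}(s) = 0$. This means $v_D(f_i) = 0$ for every prime divisor $D$ meeting $U_i$. We claim that $s$ is then a nowhere vanishing regular section, which trivialises $L$. It suffices to check on an affine open $V \subset U_i$ with normal coordinate ring $A$ that $f_i \in A^\times$.

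The key input, and the only real obstacle, is the algebraic Hartogs property. A normal Noetherian domain is the intersection of its localisations at height one primes:
\[
A = \bigcap_{\mathrm{ht}\, \mathfrak{p} = 1} A_{\mathfrak{p}}.
\]
This is standard, from Serre's criterion $(R_1)+(S_2)$ or Matsumura, Theorem 11.5. Applying it to $f_i$, and separately to $f_i^{-1}$, both of which have valuation $0$ at every height one prime, we get $f_i, f_i^{-1} \in A$. Hence $f_i \in \mathcal{O}_X^\times(V)$. Gluing over a cover, the section $s$ gives an isomorphism $\mathcal{O}_X \iso L$, so $[L] = 0$ in $\mathrm{Pic}(X)$.

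Everything else is formal bookkeeping. One could alternatively identify $\mathrm{Pic}(X)$ with Cartier divisors modulo principal ones, using that $X$ is integral, and note that a Cartier divisor with trivial associated Weil divisor is locally given by units, by the same Hartogs argument.
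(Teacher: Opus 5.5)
Your proof is correct and is the standard argument (Hartshorne, Ch.~II, \S 6): define the map via the valuations of local equations of a rational section, and get injectivity from $A=\bigcap_{\mathrm{ht}\,\mathfrak p=1}A_{\mathfrak p}$ for a normal Noetherian domain, applied to $f_i$ and $f_i^{-1}$. The paper gives no proof and simply calls the lemma standard; the same valuation description, $\sum_{\mathfrak p} v_{\mathfrak p}(f_i)[\mathfrak p]$, is what it later uses for the Cartier-to-Weil map in the proof of \cref{lem:Qfactorialquotient}.
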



\begin{defn} \label{defn:cartier}
A Cartier divisor on $X$ is an equivalence class of a collection $\{ (U_i, f_i)  \}$, where $\{ U_i \}$ is an open cover of $X$ and the $f_i$ are sections of $K(X)^{\times}$ on $U_i$ with $f_i f_j^{-1} \in \mc{O}_X(U_i \cap U_j)^{\times}$. 
\end{defn}

The group $\mathrm{CaCl}(X)$ of classes of Cartier divisors modulo global principal divisors can be identified with $\mathrm{Pic} (X)$ and hence is a subgroup of $\mathrm{Cl}(X)$ when $X$ is normal. 
A Weil divisor $D$ is called $\Q$\textit{-Cartier} if 
 there exists a positive integer $n$ such that $nD$ is a Cartier divisor. 

    \begin{defn} \label{defn:q-factorial}
        A normal variety $X$ is \textit{$\Q$-factorial} if every Weil divisor $D \in \mathrm{Div}(X)$ is $
        \Q$-Cartier.
    \end{defn}

\subsection{Line bundles on quotient varieties}
Denote by $\mathrm{Pic}(A)$ the Picard group of the ring $A$, the group of isomorphism classes of invertible $A$-modules.
\begin{lemma}\label{lem:triviallinebundlesoncone}
	Let $A$ be an $\N$-graded affine, integrally closed $\C$-algebra with $A_0 = \C$. Then $\mathrm{Pic}(A) = \{ 0 \}$. 
\end{lemma}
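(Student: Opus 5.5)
The plan is to reduce to the statement that a finitely generated graded projective module over a connected graded ring is free, via Nakayama's lemma. Let $\mathfrak{m} = A_+ = \bigoplus_{i>0} A_i$, the unique graded maximal ideal. Let $M$ be an invertible $A$-module, i.e.\ a finitely generated projective module of rank one. If $M$ admits a grading compatible with that of $A$, the argument is standard. Choose homogeneous elements of $M$ whose images form a basis of $M/\mathfrak{m}M$; this is a one-dimensional space since $M$ has rank one and $A/\mathfrak{m} = \C$. The graded Nakayama lemma then gives a surjection $A(-d) \to M$. This is a surjection between torsion-free rank-one modules over the domain $A$, so it is an isomorphism and $M \cong A$ up to shift.

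The main obstacle is that an arbitrary invertible module need not come with a grading. To get around this I would use normality, which is exactly why the hypothesis ``integrally closed'' appears. Since $A$ is a normal noetherian domain, $\mathrm{Pic}(A) \hookrightarrow \mathrm{Cl}(A)$ by Lemma~\ref{lem:PicClnormalembedding}, applied to $X = \mathrm{Spec}\, A$. So it suffices to show that every invertible module is isomorphic to a graded one. Equivalently, a Cartier divisor class can be represented by a $\Gm$-stable divisor.

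The grading corresponds to a $\Gm$-action on $X = \mathrm{Spec}\, A$ that contracts everything to the vertex, since $A_0 = \C$. Because $\Gm$ is connected, it acts trivially on the discrete group $\mathrm{Cl}(X)$. More concretely, for a prime divisor $D$ and $t \in \Gm$, the translate $t\cdot D$ is algebraically equivalent to $D$ through the family parametrized by $\Gm$. One can make this rigorous as follows. Consider $\mathrm{Cl}(X) \to \mathrm{Cl}(X \times \Gm) \cong \mathrm{Cl}(X)$, using that $\mathrm{Cl}(\Gm) = 0$ and $\mathrm{Cl}(X\times \mathbb{A}^1\setminus 0) = \mathrm{Cl}(X)$. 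Under this identification, both the action map and the projection pull back $D$ to the same class. Restricting to the fibre over $t$ then gives $[t\cdot D] = [D]$.

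Given this, take any Weil divisor $D$ representing the class of $M$. The $\Gm$-invariant rational functions are homogeneous of degree zero, and the ring of graded fractions lets one replace $D$ by a linearly equivalent homogeneous divisor. For instance, one can take the divisor of $\Gm$-stable components obtained as the flat limit $\lim_{t\to 0} t\cdot D$, or argue directly that $\mathrm{Cl}(A)$ is generated by classes of homogeneous height-one primes. This last fact is a known result (Samuel/Fossum: for a graded Krull domain, $\mathrm{Cl}(A)$ is generated by homogeneous primes). With this in hand, the reflexive module $A(D)$ is graded. If it is invertible, it is free by the Nakayama step above. Hence $\mathrm{Pic}(A) = 0$.
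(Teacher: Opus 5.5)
Your proof is correct, but it takes a different route from the paper's. The paper never puts a grading on an invertible module. It compares $\mathrm{Pic}(A)\hookrightarrow\mathrm{Cl}(A)$ with $\mathrm{Pic}(A_{\mf{m}})\hookrightarrow\mathrm{Cl}(A_{\mf{m}})$ and quotes Samuel's theorem that $\mathrm{Cl}(A)\to\mathrm{Cl}(A_{\mf{m}})$ is an isomorphism for a graded Krull ring with $A_0$ a field. It concludes that $\mathrm{Pic}(A)$ embeds in $\mathrm{Pic}(A_{\mf{m}})=0$.

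You instead produce a graded model of each invertible module and then finish with graded Nakayama. The key input is that $\mathrm{Cl}(A)$ is generated by homogeneous height-one primes. This follows from Nagata's theorem applied to the set $S$ of nonzero homogeneous elements, since $S^{-1}A$ is a field or a Laurent polynomial ring over a field, hence factorial. Both arguments rest on Samuel's theory of graded Krull domains and use normality at the same point. The paper's version is shorter given the citation. Yours is closer to first principles and explains why the module is actually free, not merely why its class dies.

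Two things to tidy up. First, your paragraph showing that $\Gm$ acts trivially on $\mathrm{Cl}(X)$ does no work. For a non-finite group, an invariant class does not by itself give an invariant representative, since there is no averaging. Once you cite the homogeneous-generation result, that paragraph and the flat-limit suggestion can be deleted.

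Second, the claim that $A(D)$ is graded when $D$ is supported on homogeneous primes needs a line of justification. We have $A(D)\subset S^{-1}A$ because the non-homogeneous height-one primes are exactly those missing $S$, and $D$ has no components there. For a homogeneous height-one prime $P$, the symbolic powers $P^{(k)}$ are homogeneous, being the $P$-primary component of the homogeneous ideal $P^k$. Hence $v_P$ of an element of $S^{-1}A$ is the minimum of $v_P$ over its homogeneous components, and $A(D)$ is a graded submodule.
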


\begin{proof}
	We follow the proof given in Lemma 5.1 of \cite{Murthy}. Let $\mf{m} = A_+$ be the augmentation ideal.
Since $A$ is normal, we have a commutative diagram 
	$$
	\begin{tikzcd}
	\mathrm{Pic}(A) \ar[r,hook] \ar[d] & \mathrm{Cl}(A) \ar[d] \\
	\mathrm{Pic}(A_{\mf{m}}) \ar[r,hook] & \mathrm{Cl}(A_{\mf{m}})  
	\end{tikzcd}
	$$
	where the horizontal arrows are inclusions by \cref{lem:PicClnormalembedding}. 

    We recall that Proposition 7.4 from \cite{Samuel} says the following. Let $A$ be an $\N$-graded Krull ring with $A_0 = \C$ a field. Let $\mf{m} = A_+$ be the augmentation ideal. Then the canonical map $\mathrm{Cl}(A) \rightarrow \mathrm{Cl}(A_{\mf{m}})$ is an isomorphism. 

    It follows that the vertical map on the right is an isomorphism. Therefore the left vertical arrow is an inclusion. But $\mathrm{Pic}(A_{\mf{m}}) = \{ 0 \}$ since every finitely generated projective module over a local ring is free. 
\end{proof}
	
Geometrically,  \cref{lem:triviallinebundlesoncone} says that if $X$ is a normal affine cone of finite type over $\C$ then $\mathrm{Pic}(X) = 0$. 




\begin{lemma} \label{lem:ker_of_factor_map_for_CL}
    Let $X$ be a normal affine variety on which a finite group $\Gamma$ acts. Then pullback along $\pi \colon X \to X / \Gamma$ induces an isomorphism $\pi^* \colon \mr{Cl}(X/\Gamma)_{\Q} \to \mr{Cl}(X)^{\Gamma}_{\Q}$.  
\end{lemma}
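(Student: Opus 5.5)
We construct an explicit inverse up to torsion, using the norm (pushforward) map on Weil divisors. Write $Y = X/\Gamma$. Since $\pi$ is finite and surjective between normal varieties, pullback of Weil divisors $\pi^* \colon \mr{Div}(Y) \to \mr{Div}(X)$ is well defined. For a prime divisor $E \subset Y$, set
\[
\pi^* E = \sum_{D \subset \pi^{-1}(E)} e_D D,
\]
where the $e_D$ are ramification indices. Concretely, one localises at the generic point of $E$, where $Y$ is regular, pulls back a local equation, and extends by normality. Principal divisors go to principal divisors via $\pi^*(\mr{div} f) = \mr{div}(f \circ \pi)$, so $\pi^*$ descends to $\mr{Cl}(Y) \to \mr{Cl}(X)$. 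The image is $\Gamma$-invariant because $\pi \circ \gamma = \pi$. This gives the map in the statement.

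In the other direction, define the pushforward $\pi_* \colon \mr{Div}(X) \to \mr{Div}(Y)$ by $\pi_* D = [K(D):K(\pi(D))]\, \pi(D)$. It sends principal divisors to principal divisors via the norm, $\pi_*(\mr{div} g) = \mr{div}(N_{K(X)/K(Y)}(g))$, where $N(g) = \prod_{\gamma} \gamma^* g$. This is the standard compatibility of proper pushforward with rational equivalence (Fulton, Prop.~1.4); alternatively, one checks it directly at each height-one prime of the Dedekind localisation $\mc{O}_{Y,E}$. So $\pi_*$ descends to $\mr{Cl}(X) \to \mr{Cl}(Y)$. We then verify two identities.

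\textbf{First identity.} We claim $\pi_* \pi^* = |\Gamma| \cdot \mr{id}$ on $\mr{Div}(Y)$. This is the fundamental identity $\sum_D e_D f_D = [K(X):K(Y)] = |\Gamma|$ for the finite extension of Dedekind domains obtained by localising at $E$; recall $K(X)/K(Y)$ is Galois with group the image of $\Gamma$. If $\Gamma$ does not act faithfully, we replace $|\Gamma|$ by the order of its image, and the argument is unchanged.

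\textbf{Second identity.} We claim $\pi^* \pi_* D = \sum_{\gamma \in \Gamma} \gamma D$ for a prime divisor $D \subset X$. Since $\Gamma$ acts transitively on the primes over $E = \pi(D)$, with equal $e$ and $f$ and stabiliser of order $ef$, both sides equal $ef \sum_{D' \subset \pi^{-1}(E)} D'$. On a $\Gamma$-invariant class $c \in \mr{Cl}(X)$ this yields $\pi^* \pi_* c = |\Gamma| c$.

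After tensoring with $\Q$, these identities show that $\frac{1}{|\Gamma|} \pi_*$, restricted to $\mr{Cl}(X)^{\Gamma}_{\Q}$, is a two-sided inverse of $\pi^*$. For this we use that taking invariants commutes with $\otimes \Q$, since $\Q$ is flat and invariants of a finite group over $\Q$ are computed by averaging. Hence $\pi^*$ is an isomorphism onto $\mr{Cl}(X)^{\Gamma}_{\Q}$.

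The main obstacle is the well-definedness of $\pi_*$ on classes, i.e.\ the norm compatibility of divisors. We handle it by reducing to the Dedekind domain $\mc{O}_{Y,E}$ and its integral closure in $K(X)$, where it is classical. Note that affineness of $X$ is only used to ensure the quotient exists, and normality to work with Weil divisors.
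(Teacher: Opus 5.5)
Your proof is correct and is essentially the paper's argument. The paper deduces the lemma from Fulton's Example~1.7.6, which says that rational Chow groups of a finite quotient are the $\Gamma$-invariant rational Chow groups, taken in degree $\dim X-1$, where $A_{\dim X-1}=\mathrm{Cl}$ for normal $X$. You instead prove the codimension-one case of that statement directly, using the same push--pull identities $\pi_*\pi^*=|\Gamma|$ and $\pi^*\pi_*=\sum_{\gamma}\gamma_*$, checked via ramification theory in the Dedekind localisations $\mathcal{O}_{Y,E}$. The one bookkeeping point is that for a non-faithful action, both the norm and the sum $\sum_{\gamma}\gamma D$ should run over the image of $\Gamma$; you essentially note this already.
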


\begin{proof}
This follows from \cite[Example~1.7.6]{FultonIntersection} by considering the degree $\dim X - 1$ part of the rational Chow group. 

\end{proof}



\begin{lemma}\label{lem:pullbackgroupiso}
    Let $X$ be a variety and $\Gamma$ a finite group acting on $X$.
    \begin{enumerate}
        \item[(i)] Pullback induces an isomorphism $\pi^* \colon \Pic(X/\Gamma)_{\Q} \iso \Pic(X)_{\Q}^{\Gamma}$ of rational vector spaces. 
        \item[(ii)] If $\Gamma$ acts trivially on $\mathrm{Pic}(X)$ then $\pi^* \colon \Pic(X/\Gamma)_{\Q} \iso \Pic(X)_{\Q}$. 
        \item[(iii)] If $X$ is normal and the action of $\Gamma$ is the restriction of the action of a connected linear algebraic group $G$ then $\pi^* \colon \Pic(X/\Gamma)_{\Q} \iso \Pic(X)_{\Q}$. 
    \end{enumerate}
\end{lemma}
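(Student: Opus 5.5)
The plan is to prove (i) with a norm construction, and then deduce (ii) and (iii) from it.

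For (i), there are two maps to construct. Pullback is a homomorphism $\pi^* \colon \Pic(X/\Gamma) \to \Pic(X)$. Its image is $\Gamma$-invariant, since $\gamma^*\pi^*L \cong \pi^*L$ for all $\gamma \in \Gamma$ because $\pi\circ\gamma=\pi$. In the other direction there is the norm map $N \colon \Pic(X) \to \Pic(X/\Gamma)$. To build it, cover $X/\Gamma$ by affine opens $V$ whose preimages $U=\pi^{-1}(V)$ are $\Gamma$-stable affines on which $L$ is trivialised by a generator $s$. Define $N(L)$ on $V$ to be trivialised by $\prod_{\gamma}\gamma^*s$. The transition functions of $N(L)$ are then the products $\prod_\gamma \gamma^* g_{ij}$ of the transition functions $g_{ij}$ of $L$. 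These are $\Gamma$-invariant units on $U_i\cap U_j$, hence units on $V_i \cap V_j$, since $\mathcal{O}_{X/\Gamma}=(\pi_*\mathcal{O}_X)^\Gamma$. Equivalently, $N(L)$ is the descent of the $\Gamma$-equivariant line bundle $\bigotimes_{\gamma}\gamma^*L$; this form makes it clear that $N$ is well defined on isomorphism classes and is a homomorphism. We then check two identities directly from the cocycle description:
\[
N(\pi^*M)\cong M^{\otimes|\Gamma|},\qquad \pi^*N(L)\cong \bigotimes_{\gamma\in\Gamma}\gamma^*L .
\]
When $L$ is $\Gamma$-invariant in $\Pic(X)$, the second identity gives $\pi^*N(L)\cong L^{\otimes|\Gamma|}$. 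After tensoring with $\Q$, the map $\frac{1}{|\Gamma|}N$ is therefore a two-sided inverse of $\pi^*$ between $\Pic(X/\Gamma)_\Q$ and $\Pic(X)_\Q^\Gamma$. Here $\Pic(X)^\Gamma_\Q = (\Pic(X)^\Gamma)_\Q$, because taking invariants of a finite group commutes with $\otimes\Q$, which is exact. The main technical point is checking that the descent or norm is well defined on isomorphism classes, and the cocycle computation handles it.

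Part (ii) is immediate from (i), since the hypothesis gives $\Pic(X)^\Gamma=\Pic(X)$.

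For (iii), by (ii) it suffices to show that each $g\in G$ acts trivially on $\Pic(X)$, that is, $g^*L\cong L$. The plan is to use that $G$ is connected. Consider the action map $a\colon G\times X\to X$ and the projection $p\colon G\times X\to X$. Since $X$ is normal and $G$ is smooth and rational, $G\times X$ is normal, and there is a seesaw-type statement:
\[
\Pic(G\times X)=\Pic(X)\oplus \Pic(G)\quad\text{up to the group of characters contributing units.}
\]
Concretely, we use the result (Fossum–Iversen, or Knop–Kraft–Luna–Vust) that for normal $X$ and connected linear $G$, $a^*L\otimes p^*L^{-1}$ is pulled back from $\Pic(G)$. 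Restricting this to $\{g\}\times X$ then gives $g^*L\otimes L^{-1}$ trivial for every $g$.

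A cleaner alternative, if one prefers to avoid that theorem, is to pass to $\Q$-coefficients directly. A finite power of $L$ admits a $G$-linearisation, since $\Pic(G)$ is finite for connected linear $G$ and $X$ is normal (Knop–Kraft–Luna–Vust). A $G$-linearised bundle satisfies $g^*L^{m}\cong L^{m}$ for all $g$, so $\Gamma$ acts trivially on $\Pic(X)_\Q$, and (ii) applies. I expect the citation and verification of this linearisation result to be the main obstacle. Normality of $X$ is exactly the hypothesis it requires.
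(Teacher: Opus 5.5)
Your proof is correct, and for part (i) it takes a different route from the paper. Injectivity is essentially the same idea in both: the paper cites a norm lemma from the Stacks project.

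The difference is surjectivity onto invariants. The paper argues equivariantly. If $[L]$ is $\Gamma$-fixed, Brion's result gives a power $L^{\otimes m}$ admitting a $\Gamma$-linearization. After enlarging $m$, the stabilizers $\Gamma_x$ act trivially on fibres, and Kempf descent produces $M$ with $\pi^*M\cong L^{\otimes m}$. You instead use the norm in both directions. The identity $\pi^*N(L)\cong\bigotimes_{\gamma}\gamma^*L$ gives surjectivity at once, so you need neither the linearization result nor a descent lemma, and you get the explicit inverse $\frac{1}{|\Gamma|}N$. The paper's version is shorter given the citations, and it matches the equivariant-descent viewpoint used later (the construction of $L_\theta$ and Lemma~\ref{lem:LCLCWcomm}).

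Two small points should be justified:
\begin{itemize}
\item \textbf{The cover.} Since $X$ is quasi-projective, each finite orbit lies in an affine open on which $L$ is trivial. Intersecting its $\Gamma$-translates gives a $\Gamma$-stable affine open with the same property.
\item \textbf{The descent description of $N(L)$.} If you use it, you must check that stabilizers act trivially on the fibres of $\bigotimes_\gamma\gamma^*L$. They do, because they only permute tensor factors that are the same line.
\end{itemize}

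For (iii), your fallback route is exactly the paper's: KKLV Prop.~2.4 makes a power of $L$ $G$-linearizable, hence $\Gamma$-invariant. That only gives a trivial action on $\mathrm{Pic}(X)_{\mathbb{Q}}$, so you should invoke (i) rather than (ii), as the paper does.

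Your primary route proves more, namely that connected $G$ acts trivially on $\mathrm{Pic}(X)$ itself, after which (ii) applies literally. It rests on $\mathrm{Pic}(G\times X)=p_G^*\mathrm{Pic}(G)+p_X^*\mathrm{Pic}(X)$ for normal $X$. This holds exactly, not ``up to units'': units enter the linearization exact sequence, not this decomposition. You can see it by comparing Weil divisor classes over an open cell $\mathbb{A}^a\times\mathbb{G}_m^b\subset G$.
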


\begin{proof} 
Part (i). We first note that by \cite[0BCY]{stacks} $\pi^* \colon \Pic(X/\Gamma)_{\Q} \to \Pic(X)_{\Q}$ is injective, with image contained in $\Pic(X)_{\Q}^{\Gamma}$. 
Therefore, it suffices to show that $\pi^*$ is surjective after tensoring by $\Q$. Let $L$ be a line bundle on $X$ whose isomorphism class is fixed by $\Gamma$. Then, by \cite[Proposition 3.4.5]{Brion-linearization} there exists $m > 0$ such that $L^{\otimes m}$ can be endowed with the structure of a $\Gamma$-equivariant line bundle. 
Moreover, since the stabilizers of points in $X$ are all finite, replacing $m$ by a multiple if necessary, we may assume that $\Gamma_x$ acts trivially. Then it follows from descent \cite[Proposition~3]{KraftGbundles} that there is a line bundle $M$ on $X/\Gamma$ such that $L^{\otimes m} \cong \pi^* M$.  

Part (ii) follows directly from (i). 

Part (iii). We need to show that $\pi^*$ is surjective. Let $L \in \Pic(X)$. Since $G$ is connected and $X$ is normal,  \cite[Proposition~2.4]{KKLV} says that $L^{\otimes n}$ is $G$-linearizable for some $n > 0$. In particular, $L^{\otimes n}$ is $\Gamma$-linearizable and hence $\Gamma$ acts trivially on the isomorphism class of $L^{\otimes n}$. By part (i), there exists $m > 0$ and $M \in \Pic(X/\Gamma)$ such that $\pi^* M \cong (L^{\otimes n})^{\otimes m}$. Hence $[L] = \frac{1}{mn} \pi^* [M]$ in $\Pic(X)_{\Q}$.  
\end{proof}

The following was noted without proof in \cite[\S~2.4]{BellSchedler3large}. 

\begin{lemma}\label{lem:Qfactorialquotient}
    If $X$ is a $\Q$-factorial normal variety and $\Gamma$ a finite group acting on $X$ then $X/\Gamma$ is $\Q$-factorial and normal. 
\end{lemma}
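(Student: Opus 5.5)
Normality of $Y := X/\Gamma$ is standard, and I will check it first. Cover $X$ by $\Gamma$-stable affine opens $U = \mathrm{Spec}\, A$, which exist by our standing assumptions. Then $U/\Gamma = \mathrm{Spec}\, A^{\Gamma}$. Since $A$ is integrally closed, so is $A^{\Gamma} = A \cap K(X)^{\Gamma}$: an element of $K(X)^{\Gamma}$ integral over $A^{\Gamma}$ lies in $A$ and is $\Gamma$-invariant. Moreover $K(X)^{\Gamma}$ is the fraction field of $A^{\Gamma}$. Hence $Y$ is normal.

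For $\Q$-factoriality, let $D$ be a prime Weil divisor on $Y$, with quotient map $\pi \colon X \to Y$. Since $\pi$ is finite and surjective, $\pi^{-1}(D)$ is a union of prime divisors $E_1, \dots, E_r$ permuted transitively by $\Gamma$. Define $\pi^*D := \sum_i e_i E_i$, where $e_i$ is the ramification index of $\pi$ along $E_i$; these indices all equal a common $e$ by transitivity. This pullback can also be defined by restricting to the smooth locus of $Y$, whose complement has codimension at least $2$, and taking the closure. By hypothesis some multiple $m\pi^*D$ is Cartier. Since $\pi^*D$ is $\Gamma$-invariant, the line bundle $L = \mathcal{O}_X(m\pi^*D)$ carries a natural $\Gamma$-linearization as a subsheaf of the constant sheaf $K(X)$, on which $\Gamma$ acts. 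I would then proceed as in the proof of \cref{lem:pullbackgroupiso}(i). Replace $m$ by $m|\Gamma|$ so that every stabilizer $\Gamma_x$ acts trivially on the fibre $L_x$. Descent \cite[Proposition~3]{KraftGbundles} then gives a line bundle $M$ on $Y$ with $\pi^*M \cong L$. Concretely, $M = (\pi_*L)^{\Gamma}$.

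It remains to identify $M$ with $\mathcal{O}_Y(m|\Gamma|D)$, up to the factor $e$, which is the main step. Both $M$ and $\mathcal{O}_Y(m|\Gamma| D)$ are reflexive rank one subsheaves of $K(Y)$. Indeed, $(\pi_*\mathcal{O}_X(m\pi^*D))^{\Gamma}$ is the sheaf of invariant rational functions $f$ with $\mathrm{div}_X(f) + m\pi^*D \ge 0$. For invariant $f$ we have $\mathrm{div}_X(f) = \pi^*\mathrm{div}_Y(f)$, and $\pi^*$ is injective and order-preserving on divisors. So this condition is equivalent to $\mathrm{div}_Y(f) + mD \ge 0$. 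Hence $(\pi_*\mathcal{O}_X(m\pi^*D))^{\Gamma} = \mathcal{O}_Y(mD)$ for every $m$ (taking the multiple large enough for descent). Therefore $\mathcal{O}_Y(m|\Gamma|D) \cong M$ is invertible, and $D$ is $\Q$-Cartier.

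The point needing the most care is the equality of the descended bundle $M$ with $\mathcal{O}_Y(kD)$. This requires the linearization on $L$ to be the canonical one coming from $\Gamma$ acting on rational functions; any twist of it by a character must be killed by taking the power. The check $\mathrm{div}_X(\pi^*f) = \pi^*\mathrm{div}_Y(f)$ is local at generic points of divisors, where it reduces to a computation in DVRs.

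An alternative route avoids descent altogether. Use the norm map: if $kD'$ is locally $\mathrm{div}(g)$ on a $\Gamma$-stable affine open $U$, where $D' = \pi^*D$, then $N(g) = \prod_{\gamma\in\Gamma}\gamma^*g$ is invariant. Its divisor is $|\Gamma| k D'$, so $\mathrm{div}_{U/\Gamma}(N(g)) = |\Gamma| k D$ locally, and $D$ is $\Q$-Cartier. I would present this norm argument, since it is shorter. Its only subtle point is that $X$ must be covered by $\Gamma$-stable affine opens on which $kD'$ is principal. This can be arranged by shrinking around a $\Gamma$-orbit $\Gamma x$: the semilocal ring at the finite set $\Gamma x$ has trivial Picard group, so the local generator can be chosen on a common neighbourhood of the orbit, which can then be replaced by a $\Gamma$-stable affine neighbourhood.
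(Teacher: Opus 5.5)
Your proposal is correct, and your preferred norm argument takes a genuinely different route from the paper.

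\textbf{The paper's route.} The paper reduces to a $\Gamma$-stable affine $X$ and argues at the level of groups. It forms the square comparing $\mathrm{Pic}(X/\Gamma)\hookrightarrow\mathrm{Cl}(X/\Gamma)$ with $\mathrm{Pic}(X)^{\Gamma}\hookrightarrow\mathrm{Cl}(X)^{\Gamma}$. It checks commutativity via ramification indices, which is the same identity $\mathrm{div}_X(f)=\pi^*\mathrm{div}_{X/\Gamma}(f)$ that you use. It then cites \cref{lem:ker_of_factor_map_for_CL} (Fulton) and \cref{lem:pullbackgroupiso}(i) (Brion's linearization plus descent). Together these make both vertical maps isomorphisms after $\otimes\Q$, so the bottom arrow is an isomorphism over $\Q$.

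\textbf{Your route.} You work one prime divisor at a time and produce a local equation for a multiple of it on the quotient.

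Your descent version stays close to the paper in using descent. It replaces the class-group comparison by the explicit identification $(\pi_*\mathcal{O}_X(k\pi^*D))^{\Gamma}=\mathcal{O}_Y(kD)$. Your caveat ``up to the factor $e$'' is unnecessary, since the ramification index is already built into $\pi^*D$ and your computation shows no correction is needed.

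Your norm version is the more elementary one. It needs only three ingredients:
\begin{enumerate}
\item injectivity and positivity of $\pi^*$ on Weil divisors;
\item triviality of $\mathrm{Pic}$ of a semilocal ring, to find $\Gamma$-stable affine opens on which $k\pi^*D$ is principal;
\item the invariant norm $N(g)=\prod_{\gamma}\gamma^*g$.
\end{enumerate}
It avoids Fulton, the linearization theorem and descent altogether. It also gives an explicit exponent: if $k\pi^*D$ is Cartier, then $k|\Gamma|D$ is Cartier.

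\textbf{Comparison.} In the paper's deduction, what is really used is injectivity of $\pi^*$ on $\mathrm{Cl}(X/\Gamma)_\Q$ and surjectivity of $\pi^*$ onto $\mathrm{Pic}(X)^{\Gamma}_\Q$. Your argument proves the first locally by hand, via the norm trick. It sidesteps the second by working where the line bundle is already trivial. What the paper's route buys in exchange is the structural statement $\mathrm{Pic}(X/\Gamma)_\Q\cong\mathrm{Pic}(X)^{\Gamma}_\Q\cong\mathrm{Cl}(X)^{\Gamma}_\Q\cong\mathrm{Cl}(X/\Gamma)_\Q$ on $\Gamma$-stable affines. It gets this cheaply from lemmas the paper needs elsewhere anyway.
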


\begin{proof}

From now on, denote by $\pi$ the morphism $X \rightarrow X/\Gamma$. 
Being $\Q$-factorial is a Zariski local property. Moreover, since all varieties (in particular $X$) are assumed to be quasi-projective, we may cover $X$ by $\Gamma$-stable affine open subsets. 
Therefore, considering each affine open set in turn, we may assume that $X$ is affine. 
This gives rise to the diagram 
\begin{equation}
\begin{tikzcd}
    \mathrm{Pic}(X)^{\Gamma} \ar[r,hook,"d_X"] & \mathrm{Cl}(X)^{\Gamma} \\
    \mathrm{Pic}(X/\Gamma) \ar[u,"\pi^*"] \ar[r,hook,"d_{X/\Gamma}"] & \mathrm{Cl}(X / \Gamma) \ar[u,"\pi^*"']
\end{tikzcd}
\label[diagram]{diag:Qfactorialquotient}
\end{equation}
where the pullback $\pi^*$ on the right is the map in \cref{lem:ker_of_factor_map_for_CL}. We claim that this diagram is commutative. If we identify line bundles with Cartier divisors then $\pi^* \{ (U_i,f_i) \} = \{ (\pi^{-1}(U_i),f_i) \}$, where $f_i \in \mathrm{Frac} \, \mc{O}(X)^{\Gamma} = (\mathrm{Frac} \, \mc{O}(X))^{\Gamma} \subset \mathrm{Frac} \, \mc{O}(X)$. Then $d_{X/\Gamma}( \{ (U_i,f_i) \} ) = \sum_{\mf{p}} v_{\mf{p}}(f_i) [\mf{p}]$, where the sum is over all prime ideals of height one in $\mc{O}(X)^{\Gamma}$, $[\mf{p}]$ is the class of $\mf{p}$ in $ \mathrm{Cl}(X / \Gamma)$ and the expression is independent of the choice of $i$; see \cite[Proposition~6.11]{Hartshorne}. The ramification index $e(\mf{B},\mf{p})$ is defined so that $v_{\mf{B}} = e(\mf{B},\mf{p}) v_{\mf{p}}$ if $\mf{B} \cap \mc{O}(X)^{\Gamma} = \mf{p}$. If $\pi^*(\mf{p}) = \sum_{\mf{B}} e(\mf{B},\mf{p}) \mf{B}$ then 
\begin{align*}
\pi^*(d_{X/\Gamma}(\{ (U_i,f_i) \})) & = \pi^* \left(\sum_{\mf{p}} v_{\mf{p}}(f_i) [\mf{p}] \right) \\
& = \sum_{\mf{p}} \sum_{\mf{B}} v_{\mf{p}}(f_i) e(\mf{B},\mf{p}) [\mf{B}] \\
& = \sum_{\mf{B}} v_{\mf{B}}(f_i) [\mf{B}] = d_X(\pi^* \{ (U_i,f_i) \} ),
\end{align*}
as claimed. By \cref{lem:ker_of_factor_map_for_CL}, $\pi^*$ is an isomorphism. Together with \cref{lem:pullbackgroupiso}(i), this implies that, after tensoring with $\Q$, the vertical maps are isomorphisms. Since $X$ is assumed to be $\Q$-factorial, the map $\mathrm{Pic}(X)_{\Q} \to \mathrm{Cl}(X)_{\Q}$ is an isomorphism. Since $\Gamma$ is finite, taking invariants is exact over $\Q$ and gives
\[
\mathrm{Pic}(X)^{\Gamma} \otimes_{\Z} \Q = (\mathrm{Pic}(X)_{\Q})^{\Gamma} \iso (\mathrm{Cl}(X)_{\Q})^{\Gamma} = \mathrm{Cl}(X)^{\Gamma} \otimes_{\Z} \Q. 
\]
Thus, the top horizontal map is also an isomorphism over $\Q$. We deduce that the bottom horizontal map is an isomorphism over $\Q$. Lastly, normality is preserved under quotients by finite groups.
\end{proof}

\begin{example}
    The converse to \cref{lem:Qfactorialquotient} is false. For instance, if $X = \overline{\mathcal{O}}_{\mathrm{min}} \subset \lie{sl}_n$ for $n>2$ then matrix transposition $(-)^t$ is an involution on $X$. Thus, there is an action of $\Z_2$ on $X$. One can show that this acts non-trivially on $\mathrm{Cl}(X) = \Z$. Therefore, $\mathrm{Cl}(X)^{\Z_2} = 0$ and hence \cref{lem:ker_of_factor_map_for_CL} implies that the quotient $\overline{\mathcal{O}}_{\mathrm{min}} / \Z_2$ has finite class group. In particular, it is $\Q$-factorial. However, $\overline{\mathcal{O}}_{\mathrm{min}}$ is not $\Q$-factorial since its Picard group is trivial by \cref{lem:triviallinebundlesoncone}.
\end{example}



The following is presumably well-known but we do not know of a reference.

\begin{lemma}\label{lem:grouprojquotient}
    Let $A = \oplus_{n \ge 0} A_n$ be a graded affine $\C$-algebra and $\Gamma$ a finite group acting on $A$ by graded algebra automorphisms. Let $X = \Proj A$ and $Y = \Proj A^{\Gamma}$. Then the morphism $\pi \colon X \to Y$ induced by the inclusion $A^{\Gamma} \hookrightarrow A$ is the categorical quotient. 
\end{lemma}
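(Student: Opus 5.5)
The plan is to reduce to the affine statement on the standard affine cover of $\Proj$ and then glue. First observe that $A$ is a finite module over $A^{\Gamma}$: it is integral over $A^\Gamma$ because each $a$ is a root of $\prod_{\gamma}(t - \gamma a)$, and it is finitely generated as an algebra, so Noether's theorem applies. Hence $A^\Gamma$ is a finitely generated graded algebra and $Y$ is a projective scheme. The first point to settle is that $\pi$ is an everywhere defined morphism. The irrelevant ideal $A_+$ is integral over $A^\Gamma_+$, so $\sqrt{A^\Gamma_+ A} = \sqrt{A_+}$. Therefore the open sets $D_+(f) \subset X$, for $f$ homogeneous of positive degree in $A^{\Gamma}$, cover $X$, and $\pi$ is given on them by $\Spec A_{(f)} \to \Spec (A^{\Gamma})_{(f)}$.

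Next I would identify the local pieces. For $f \in A^\Gamma$ homogeneous of positive degree, $\Gamma$ fixes $f$, so it acts on $A_f$ preserving the grading and therefore on the degree-zero part $A_{(f)}$. Taking invariants commutes with localization at the invariant element $f$ and with taking the degree-zero part, since averaging by the Reynolds operator $\frac{1}{|\Gamma|}\sum_\gamma \gamma$ is exact in characteristic zero and preserves degrees. This gives $(A_{(f)})^\Gamma = (A^\Gamma)_{(f)}$. Consequently $D_+(f) \to D_+^{Y}(f)$ is the affine quotient $\Spec B \to \Spec B^\Gamma$ with $B = A_{(f)}$. It is a standard fact that this is a categorical quotient, and in fact a good quotient: it is surjective, its fibres are $\Gamma$-orbits, and $\mc{O}_{Y} = (\pi_*\mc{O}_X)^\Gamma$. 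Also $\pi^{-1}(D_+^Y(f)) = D_+(f)$, so $\pi$ is affine and $\Gamma$-invariant.

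Finally I would glue. The open sets $D_+^Y(f)$ cover $Y$ and have $\Gamma$-stable preimages. Let $\phi \colon X \to Z$ be any $\Gamma$-invariant morphism. On each $D_+(f)$, the categorical property gives a unique morphism $\psi_f \colon D_+^Y(f) \to Z$. Here I would use the good-quotient property, specifically that images of $\Gamma$-stable closed sets are closed and that $\mc{O}_Y = (\pi_*\mc{O}_X)^\Gamma$, rather than the bare affine universal property, because $Z$ need not be affine. The maps $\psi_f$ agree on overlaps $D_+^Y(fg)$ by uniqueness, so they glue to a morphism $\psi \colon Y \to Z$. Uniqueness of $\psi$ holds because $\pi$ is surjective and $\mc{O}_Y \hookrightarrow \pi_*\mc{O}_X$, which makes $\pi$ an epimorphism.

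The main obstacle is to handle a target $Z$ that is not affine. I would deal with it through the standard argument that a good quotient is categorical. Take an affine open $W \subset Z$. Then $\phi^{-1}(Z\setminus W)$ is $\Gamma$-stable and closed, so its image is closed. The complement of that image is an open set $V \subset Y$ with $\pi^{-1}(V) = \phi^{-1}(W)$, using that the fibres of $\pi$ are orbits. On $V$ the map is defined through the invariant functions, by $\mc{O}(W) \to \mc{O}(\phi^{-1}W)^\Gamma = \mc{O}(V)$. Everything else reduces to routine commutative algebra.
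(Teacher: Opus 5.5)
Your proof is correct and follows essentially the same route as the paper: cover $X$ by the sets $D_+(f)$ with $f \in A^\Gamma$ homogeneous of positive degree, use $\sqrt{A\cdot A^\Gamma_+}=\sqrt{A_+}$ to see that these sets cover $X$, identify $(A_{(f)})^\Gamma=(A^\Gamma)_{(f)}$, and glue the affine quotients. There are two minor differences. You get the radical equality directly from the monic polynomial $\prod_\gamma(t-\gamma a)$, whose non-leading coefficients lie in $A^\Gamma_+$; this is shorter than the paper's degree-bounding argument using finitely many generators of $A_+$ over $A^\Gamma$. You also write out the standard argument that a good quotient is categorical, which the paper takes for granted when it says that exhibiting such a cover suffices.
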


\begin{proof}
    To show that $\pi$ is a categorical quotient it suffices to exhibit an affine open cover $\{ V_i \}$ of $Y$ such that $\{ U_i = \pi^{-1}(V_i) \}$ is an affine ${\Gamma}$-stable covering of $X$ with $\mc{O}(V_i) = \mc{O}(U_i)^{\Gamma}$ for all $i$. In fact, we construct the $U_i$ first as $U_i = U_{f_i}$, where $U_{f_i} = \Spec A_{(f_i)}$ and $f_i \in A^{\Gamma}_{> 0}$ is homogeneous. If we take $V_i = V_{f_i}$ similarly, then it is clear that 
    \[
    \mc{O}(V_i) := (A^{\Gamma})_{(f_i)} = (A_{(f_i)})^{\Gamma} = \mc{O}(U_i)^{\Gamma}.
    \]
    We just need to show that there exist $f_1, \dots, f_k \in A_{> 0}^{\Gamma}$ such that the $U_{f_i}$ cover $X$ and the $V_{f_i}$ cover $Y$. Take the $f_i$ to be homogeneous generators of $A_+^{\Gamma}$ over $A^{\Gamma}$. Then the latter claim is clear. Let $I = A \cdot A_{> 0}^{\Gamma}$ be the ideal in $A$ generated by the $f_i$. We need to show that if $\mf{p} \lhd A$ is a graded prime ideal containing $I$ then $\mf{p}$ contains $A_+$ (and thus $\mf{p} \notin \Proj A$). It suffices to show that there exists some $N > 0$ such that $A_+^N \subset I$. Now $A$ is a finite graded $A^{\Gamma}$-module, therefore so too is $A_+$. This means we can choose homogeneous elements $h_1, \dots, h_r \in A_+$ such that $A_+ = A^{\Gamma} \cdot \{ h_1, \dots, h_r \}$. The images $\bar{h}_i$ of the $h_i$ in $A_+/ I$ form a spanning set as a $(A^{\Gamma} / A_+^{\Gamma})\cong A_0^{\Gamma}$-module. Take any homogeneous $h \in A_+ / I$ and note that $\deg h > 0$. Then we may write 
    \[
    h^N = \sum_{i} a_{i,N} \bar{h}_{i},
    \]
    where $a_{i,N} \in A_0^{\Gamma}$ and $a_{i,N} = 0$ unless $\deg \bar{h}_i = \deg h^N$. But this implies that $h^N = 0$ if $N > \max \{ \deg h_i \}$. Thus, $A_+^N \subset I$ as required. 
\end{proof}

\begin{remark}
    Note that the key point in the proof of \cref{lem:grouprojquotient} is that $\sqrt{A_+^{{}}} = \sqrt{A \cdot A_+^{\Gamma}}$.  
\end{remark}

\begin{lemma}\label{lem:finitequotientsing}
    Suppose a finite group $\Gamma$ acts symplectically on a smooth symplectic variety $M$ and that this action is faithful. Let $\pi \colon M \to M / \Gamma$ be the quotient morphism. A point $\pi(x) \in M/ \Gamma $  is singular if and only if $x \in M$ has non-trivial stabilizer. 
\end{lemma}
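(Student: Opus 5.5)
The question is local at a point $x \in M$, so I will reduce it to a statement about a finite linear group acting on $T_xM$ and then invoke the Chevalley--Shephard--Todd theorem. The key input from symplecticity is that symplectic linear maps have no reflections, so the only finite symplectic group with a smooth quotient is the trivial one.

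\textbf{Step 1: reduce to the linear action.} Let $\Gamma_x$ denote the stabilizer of $x$. Choose a $\Gamma$-stable affine open neighbourhood of $x$. Then $\pi$ factors étale-locally near $x$ as
\[
M \to M/\Gamma_x \to M/\Gamma ,
\]
and the second map is étale at the image of $x$, since $\Gamma_x$ is the full stabilizer. This is the standard Luna-type slice argument for finite groups, which can be done directly by taking a $\Gamma_x$-stable affine open on which the $\Gamma/\Gamma_x$-translates are disjoint. Consequently $\pi(x)$ is singular in $M/\Gamma$ if and only if the image of $x$ is singular in $M/\Gamma_x$.

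Next I pass to the tangent space. Since $\Gamma_x$ is finite, we can linearize. Choose a $\Gamma_x$-equivariant splitting $\mathfrak m_x/\mathfrak m_x^2 \to \mathfrak m_x$, using averaging over $\Gamma_x$. This gives a $\Gamma_x$-equivariant map $M \to T_xM$ that is étale at $x$. Passing to completions, $\widehat{\mathcal O}_{M/\Gamma_x,\bar x} \cong \C[[T_xM]]^{\Gamma_x}$. Hence $\bar x$ is a smooth point if and only if $\C[[V]]^{\Gamma_x}$ is a regular local ring, where $V = T_xM$. Because the grading is compatible, this happens if and only if $\C[V]^{\Gamma_x}$ is a polynomial ring.

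\textbf{Step 2: faithfulness on the tangent space.} The action of $\Gamma$ on $M$ is faithful and $M$ is irreducible. An element of $\Gamma_x$ that acts trivially on $T_xM$ must therefore act trivially on a neighbourhood of $x$, by linearization: a finite-order automorphism that is linearizable and has trivial differential is the identity near $x$. Hence it is trivial on $M$, by density. So $\Gamma_x \hookrightarrow \mathrm{Sp}(V)$, where $V$ carries the symplectic form $\omega_x$. This is where the faithfulness hypothesis is used.

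\textbf{Step 3: apply Chevalley--Shephard--Todd.} By that theorem, $\C[V]^{\Gamma_x}$ is polynomial if and only if $\Gamma_x$ is generated by pseudo-reflections, that is, elements whose fixed space is a hyperplane. A symplectic automorphism $g$ cannot be a pseudo-reflection. The fixed space $V^g$ is the kernel of $g-1$, and its $\omega$-orthogonal complement is the image of $g-1$. For symplectic $g$, the fixed space and the image of $g-1$ are symplectic complements, so $\operatorname{codim} V^g = \dim \operatorname{im}(g-1)$. Since $g$ has finite order it is semisimple, so $V = V^g \oplus \operatorname{im}(g-1)$. Then $\omega$ restricts nondegenerately to $\operatorname{im}(g-1)$, forcing its dimension to be even. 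Thus $\operatorname{codim} V^g \neq 1$, and $\Gamma_x$ contains no pseudo-reflections.

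It follows that the quotient is smooth if and only if $\Gamma_x$ is trivial. Conversely, if $\Gamma_x = 1$ then $\pi$ is étale at $x$, so $\pi(x)$ is smooth.

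\textbf{Main obstacle.} I expect the main obstacle to be the careful justification of the étale-local reduction in Step 1, namely identifying the complete local ring of $M/\Gamma$ at $\pi(x)$ with $\C[[V]]^{\Gamma_x}$. I would handle this by citing the standard fact that completion commutes with taking invariants of a finite group, combined with the decomposition of $\pi^{-1}(\pi(x))$ into the orbit $\Gamma x$.
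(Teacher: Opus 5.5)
Your proof is correct and follows essentially the same route as the paper. Both arguments linearize the stabilizer action at $x$ (Cartan's lemma), show that a finite-order symplectic automorphism has fixed space of even codimension so that $\Gamma_x$ contains no pseudo-reflections, and conclude with Chevalley--Shephard--Todd; the paper phrases the no-reflection step globally as $\operatorname{codim}_M M^g \ge 2$ via $T_x M^g = (T_xM)^g$, whereas you work directly on $T_xM$ and spell out the reduction from $\Gamma$ to $\Gamma_x$ and the faithfulness on the tangent space in more detail.
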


\begin{proof}
Let $g \in \Gamma$ and assume that $g \cdot m = m$. Then $g$ acts symplectically on $T_m M$. This means that the set of fixed points $(T_m M)^g$ is a symplectic subspace of $T_m M$ and hence has even codimension. By Cartan's Lemma, $M^g$ is smooth and $(T_m M)^g = T_m M^{g}$. Thus if $(T_m M)^g = T_m M$ then $\dim M^g = \dim M$ which implies that $M^g = M$ and hence $g = 1$ since the action is assumed faithful. Therefore the codimension of $(T_m M)^g$ in $T_m M$ is at least two. This implies that the codimension of $M^g$ in $M$ is at least two. 

    \begin{claim}
Suppose $M$ is a smooth variety with a faithful action of a finite group $\Gamma$ such that $\mathrm{codim}_M M^g \ge 2$ for all $g \in \Gamma \setminus \{ 1 \}$. Let $Z = \left \{ x \in M \, | \,  \on{Stab}_{\Gamma}(x) \neq \{ 1 \} \right \}$. Then 
\[
\on{Sing}(M/\Gamma)= \pi( Z).
\]
\end{claim}

\begin{proof}
    The claim follows from Cartan's Lemma, see \cite[Lemma 2.8]{Etingof_finite_group_action}, which says that in a formal neighbourhood of each $m \in M$ the action can be linearized, together with the Chevalley–Shephard–Todd Theorem.
\end{proof}
 
The claim applies and the lemma follows. 
\end{proof}

  \section{Geometric invariant theory} \label{sec:3.GIT}
  
\subsection{Geometric invariant theory}

In this section, we provide a reminder of the basic facts about GIT quotients for affine varieties that are required later. 

 Suppose $G$ is a  connected reductive group acting on an affine variety $Y = \Spec A$. The affine GIT quotient is the variety $Y \git G:= \Spec A^G$, where $A^G$ is the ring of $G$-invariant elements in $A$. Fix a character $\chi \in \ch(G) $. For $n \ge 0$, the space of semi-invariants of weight $n\chi$ is then given by
 \begin{equation*}
  A_{n\chi}= \{ f \in A \, | \,  f(g^{-1} \cdot y)= \chi(g)^nf(y) \text{ for all } g \in G, y \in Y \} . 
 \end{equation*}
 The \textit{semi-stable locus} of $\chi$ in $Y$ is the open set     
 \begin{equation*}
     Y^{\chi} := \{ y\in Y \, | \,  \text{ there exists } n>0, f\in A_{n\chi} \text{ such that } f(y) \neq 0    \}.
 \end{equation*}
 We define the GIT quotient with respect to $\chi$ to be 
 \begin{equation*}
     Y \git_{\chi} \, G:=  \Proj \left ( \bigoplus_{n \geq 0} A_{n\chi} \right ).
 \end{equation*}
The semi-stable locus varies as we vary the character $\chi$. The space  $\ch(G)_\Q$ can be decomposed into regions, under the equivalence relation 
\begin{equation*}
    \chi, \chi' \text{ belong to the same region} \Leftrightarrow \text{the sets } Y^{\chi} \text{ and } Y^{\chi'} \text{ coincide.}  
\end{equation*}
 The part of this decomposition, supported on the (positive multiples of) characters with non-empty semistable locus is called the \textit{GIT fan} $\Sigma(Y)$ of the action of $G$ on $Y$. It is shown, for instance in \cite[Theorem 3.2]{ARZHANTSEV-git_via_cox_rings}, that it is given by a finite fan consisting of rational polyhedral cones. The interiors of the cones of top dimension are called \textit{chambers}.

\subsection{GIT fan of a quotient variety}
 
Let $Y$ be an affine variety, $G$ a connected reductive group and $\Gamma$ a finite group. Assume that $G \times \Gamma$ acts on $Y$. In this section, we prove the following result. 

\begin{proposition}\label{prop:GITfansthesame}
The GIT fan $\Sigma(Y)$ for $G$ acting on $Y$ coincides with the GIT fan $\Sigma(Y/\Gamma)$ for $G$ acting on $Y/\Gamma$.
\end{proposition}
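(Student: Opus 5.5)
Write $Y = \Spec A$ and $\pi \colon Y \to Y/\Gamma = \Spec A^{\Gamma}$. Since the actions of $G$ and $\Gamma$ commute, $G$ acts on $A^{\Gamma}$. For each $\chi \in \ch(G)$ the semi-invariant spaces satisfy $(A^{\Gamma})_{n\chi} = (A_{n\chi})^{\Gamma}$, because $\Gamma$ preserves each $A_{n\chi}$. The plan is to show that for every $\chi$ we have
\[
Y^{\chi} = \pi^{-1}\bigl((Y/\Gamma)^{\chi}\bigr).
\]
Since $\pi$ is surjective, this gives $(Y/\Gamma)^{\chi} = \pi(Y^{\chi})$. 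Hence $Y^{\chi} = Y^{\chi'}$ if and only if $(Y/\Gamma)^{\chi} = (Y/\Gamma)^{\chi'}$. The two equivalence relations on $\ch(G)_{\Q}$ therefore coincide, and so do the supports: $Y^\chi \neq \emptyset$ if and only if $(Y/\Gamma)^\chi \neq \emptyset$. So the two fans are the same.

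The inclusion $\pi^{-1}((Y/\Gamma)^{\chi}) \subset Y^{\chi}$ is immediate. If $f \in (A^{\Gamma})_{n\chi}$ does not vanish at $\pi(y)$, then $f$, viewed in $A_{n\chi}$, does not vanish at $y$.

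For the reverse inclusion, take $y \in Y^{\chi}$ and $f \in A_{n\chi}$ with $f(y) \neq 0$. I would form the norm
\[
F = \prod_{\gamma \in \Gamma} \gamma^* f.
\]
Each $\gamma^* f$ lies in $A_{n\chi}$ because $\Gamma$ commutes with $G$. So $F \in A_{|\Gamma| n \chi}$, and $F$ is $\Gamma$-invariant, which puts it in $(A^{\Gamma})_{|\Gamma| n\chi}$. The difficulty is that $F(y) = \prod_{\gamma} f(\gamma^{-1} y)$ may vanish, since $f$ need only be nonzero at $y$, not on its $\Gamma$-orbit.

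Handling this vanishing is the main obstacle. I would first use that $Y^{\chi}$ is $\Gamma$-stable, since $\gamma^* A_{n\chi} = A_{n\chi}$. Thus for each point $\gamma^{-1}y$ of the orbit there is some semi-invariant of a multiple of $\chi$ that is nonzero there. Raising these to suitable powers puts them all in a common degree $A_{m\chi}$. A generic linear combination $h$ of them is then nonzero at every one of the finitely many points of $\Gamma y$, because the set of combinations vanishing at a given point is a proper linear subspace. The norm $H = \prod_{\gamma} \gamma^* h$ is then a $\Gamma$-invariant element of $A_{|\Gamma| m\chi}$ with $H(y) \neq 0$, so $\pi(y) \in (Y/\Gamma)^{\chi}$.

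This argument works for arbitrary characters, including $\chi = 0$, which gives the whole space on both sides. The chamber structure and the rational polyhedral cones of \cite[Theorem 3.2]{ARZHANTSEV-git_via_cox_rings} are determined by this equivalence relation, so no further comparison is needed. If one also wants the corresponding GIT quotients to agree, one can note that $(Y/\Gamma)\git_{\chi} G = (Y \git_{\chi} G)/\Gamma$ by \cref{lem:grouprojquotient}, but this is not required for the fans.
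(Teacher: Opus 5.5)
Your proof is correct, and it takes a different route from the paper. Both arguments rest on the same key equality $Y^{\chi} = \pi^{-1}\bigl((Y/\Gamma)^{\chi}\bigr)$, which the paper states as $Y^{\theta}/\Gamma = (Y/\Gamma)^{\theta}$ in \cref{lem:GITtheWTequal}.

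The paper proves this equality geometrically. It embeds $Y$ equivariantly into a $G\times\Gamma$-representation $U$. It then uses the Hilbert--Mumford criterion, in King's form, to write $U^{\theta}$ as an intersection of semistable loci for one-parameter subgroups. Finally it treats a linear $\Gm\times\Gamma$-representation by splitting it into weight spaces. There, positive-degree $\Gamma$-invariants on the negative-weight part separate nonzero vectors from the origin (\cref{lem:HMCsWact}).

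You stay at the level of the coordinate ring. You use that $\Gamma$ preserves each $A_{n\chi}$, then take a generic linear combination of semi-invariants followed by the norm $\prod_{\gamma}\gamma^{*}h$. This produces a $\Gamma$-invariant semi-invariant that does not vanish at $y$. Your route is shorter and avoids both the equivariant embedding and the reduction to one-parameter subgroups. The key equality also does not use reductivity of $G$. It is the same trick the paper itself uses later for relative base loci in the proof of \cref{thm:pistarmorifan}(ii). The paper's approach yields a one-parameter-subgroup description of the semistable loci, which is more than this proposition needs.

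One small simplification is available. You do not need $\Gamma$-stability of $Y^{\chi}$ or the passage to a common degree. The translates $\gamma^{*}f$ of your original $f$ already lie in $A_{n\chi}$, and each is nonzero at the corresponding point of $\Gamma y$. So the span of these translates already supplies your generic $h$.
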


\begin{proof}
    The statement of the proposition is equivalent to the following. If $\theta,\theta'$ are characters of $G$ then $Y^{\theta} = Y^{\theta'}$ if and only if $(Y / \Gamma)^{\theta} = (Y / \Gamma)^{\theta'}$. Since the set $Y^{\theta}$ is $\Gamma$-stable, $Y^{\theta} = \pi^{-1}(Y^{\theta} / \Gamma)$, where $\pi \colon Y \to Y/\Gamma$ is the quotient morphism. Therefore, the required statement is a consequence of \cref{lem:GITtheWTequal} below. 
\end{proof}

\begin{lemma}\label{lem:GITtheWTequal}
For any linear character $\theta$ of $G$, $Y^{\theta} / \Gamma \;=\; (Y/\Gamma)^{\theta}$.
\end{lemma}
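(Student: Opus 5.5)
We will prove the equality of subsets of $Y/\Gamma$ by showing that $\pi^{-1}((Y/\Gamma)^{\theta}) = Y^{\theta}$, where $\pi \colon Y \to Y/\Gamma$ is the quotient morphism. Since $Y^{\theta}$ is $\Gamma$-stable and $\pi$ is surjective, this gives $Y^{\theta}/\Gamma = \pi(Y^{\theta}) = (Y/\Gamma)^{\theta}$. Write $Y = \Spec A$, so that $Y/\Gamma = \Spec A^{\Gamma}$. Because the actions of $G$ and $\Gamma$ commute, $\Gamma$ preserves each space of semi-invariants $A_{n\theta}$. Hence $(A^{\Gamma})_{n\theta} = (A_{n\theta})^{\Gamma}$, and this is exactly the space of semi-invariants of weight $n\theta$ on $Y/\Gamma$. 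We will show the two inclusions separately.

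The inclusion $\pi^{-1}((Y/\Gamma)^{\theta}) \subseteq Y^{\theta}$ is immediate. Suppose $f \in (A^{\Gamma})_{n\theta}$ with $n>0$ satisfies $f(\pi(y)) \neq 0$. Then $\pi^*f \in A_{n\theta}$ and $(\pi^* f)(y) \ne 0$, so $y \in Y^{\theta}$.

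For the reverse inclusion, take $y \in Y^{\theta}$ and choose $f \in A_{n\theta}$ with $n>0$ and $f(y) \neq 0$. Set
\[
F := \prod_{\gamma \in \Gamma} \gamma \cdot f .
\]
Each factor $\gamma\cdot f$ lies in $A_{n\theta}$, because $\Gamma$ commutes with $G$. Since semi-invariants multiply with additive weights, $F \in A_{|\Gamma| n\theta}$. The product is also visibly $\Gamma$-invariant, so $F \in (A^{\Gamma})_{|\Gamma|n\theta}$. Evaluating at $y$ gives $F(y) = \prod_{\gamma} f(\gamma^{-1} y)$ (up to the convention for the action). This value is nonzero provided $f$ does not vanish at any point of the orbit $\Gamma y$.

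This is the only genuine obstacle, since a single $f$ may vanish somewhere on $\Gamma y$. To handle it, we replace $f$ by a suitable element of the finite-dimensional space $A_{n\theta}$. For each $\gamma$, the translate $\gamma \cdot f$ lies in $A_{n\theta}$ and is nonzero at $\gamma y$. So for each point $y' = \gamma y$ of the orbit, the linear functional "evaluate at $y'$" on $A_{n\theta}$ is nonzero, and its kernel $H_{y'}$ is a proper hyperplane. A complex vector space is not a finite union of proper hyperplanes, so there is some $h \in A_{n\theta}$ outside every $H_{y'}$, that is, with $h(y') \neq 0$ for all $y' \in \Gamma y$. Running the norm construction above with $h$ in place of $f$ gives $F \in (A^{\Gamma})_{|\Gamma|n\theta}$ with $F(y) \neq 0$. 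Hence $\pi(y) \in (Y/\Gamma)^{\theta}$, which completes the argument.
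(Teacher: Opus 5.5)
Your proof is correct, and it takes a genuinely different route from the paper. The paper proceeds in four steps:
\begin{enumerate}
\item it embeds $Y$ equivariantly in a linear representation $U$ of $G \times \Gamma$ and reduces to the case $Y = U$;
\item it uses the Hilbert--Mumford criterion to write $U^{\theta}$ as the intersection of the $\Gm$-semistable loci $U^{\theta,\lambda}$ over all one-parameter subgroups $\lambda$;
\item it treats a linear $\Gm \times \Gamma$-representation in Lemma~\ref{lem:HMCsWact}, describing the unstable locus as $\bigoplus_{n \ge 0} U_n$;
\item it concludes using the fact that the null-fibre of $U^{-} \to U^{-}/\Gamma$ is the origin.
\end{enumerate}

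Your norm argument works directly with semi-invariants and needs none of this. It uses no equivariant embedding, no numerical criterion, and not even reductivity or connectedness of $G$; only that $\Gamma$ is finite and commutes with $G$. It also sidesteps the reduction steps the paper passes over quickly. One is the compatibility of semistable loci with closed embeddings, which needs semi-invariants to lift, i.e.\ reductivity. The other is the embedding of $U/\Gamma$ into a $G$-representation. What the paper's route buys is a one-parameter-subgroup description of the semistable loci, in keeping with its GIT framework, but that is not needed for the lemma itself. Your hyperplane-avoidance plus norm trick is exactly the one the paper uses later, in the proof of Theorem~\ref{thm:pistarmorifan}(ii), for relative base loci.

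One small correction: $A_{n\theta}$ is in general not finite-dimensional. It is a module over $A^{G}$; in the Cox ring application it is a module over $\mathcal{O}(Y)$. This is harmless. A vector space over an infinite field is never a finite union of proper subspaces, whatever its dimension. Alternatively, you can work inside the finite-dimensional span of $\{\gamma \cdot f \mid \gamma \in \Gamma\}$, as the paper does.
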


\begin{proof}
First, we may choose a linear representation $U$ of $G \times \Gamma$ such that $Y$ embeds equivariantly as a closed subvariety of $U$.  
Then
\[
  Y^{\theta} = U^{\theta} \cap Y, \qquad 
  (Y/\Gamma)^{\theta} = (U/\Gamma)^{\theta} \cap (Y/\Gamma), \qquad 
  Y^{\theta}/\Gamma = (U^{\theta}/\Gamma) \cap (Y/\Gamma).
\]
Therefore, it suffices to assume that $U = Y$.  Let $\mathbb{Y}(G)$ denote the space of one-parameter subgroups of $G$. If $\chi_n \colon \Gm \to \Gm$ is the character given by $\chi_n(t) = t^n$ then there is a pairing $\langle - , - \rangle \colon \mathbb{Y}(G) \times \mathbb{X}^{\bullet}(G) \to \Z$ defined by $\langle \lambda, \theta \rangle = n$ if $\theta \circ \lambda = \chi_n$. A one-parameter subgroup $\lambda \in \mathbb{Y}(G)$ acts on $U$ by the composition $\Gm \stackrel{\lambda}{\rightarrow} G \to \mathrm{Aut}(U)$. Then we write $U^{\theta,\lambda}$ for the set of $\chi_n$-semistable points with respect to this action of $\Gm$, where $n = \langle \lambda, \theta \rangle$. We claim that 
\[
U^{\theta} = \bigcap_{\lambda \in \mathbb{Y}(G)} U^{\theta,\lambda}.
\]
It is clear that if $u$ is $\theta$-semistable for the $G$-action then it is also $\chi_{n}$-semistable (for $n = \langle \lambda, \theta \rangle$) with respect to the $\Gm$-action defined by $\lambda$. Thus, $U^{\theta} \subset \bigcap_{\lambda \in \mathbb{Y}(G)} U^{\theta,\lambda}$. For the converse, we apply the Hilbert–Mumford criterion, as stated in \cite[Proposition~2.5]{KingStable}. If $u \in U \setminus U^{\theta}$ then there exists a one-parameter subgroup 
$\lambda \colon \Gm \to G$ such that the limit 
\[
  \lim_{t \to 0} \lambda(t) \cdot u
\]
exists and $\langle \lambda, \theta \rangle > 0$. But this means that, under the action of $\Gm$ on $U$ via $\lambda$: 
\begin{enumerate}
    \item $U = \bigoplus_{n \in \Z} U_n$ with $\lambda(t) \cdot v = t^n v$ for $v \in U_n$ and $t \in \Gm$ 
    \item $U \setminus U^{\theta,\lambda} = \bigoplus_{n \ge 0} U_n$.
\end{enumerate} 
Point (2) implies that $u \in U \setminus U^{\theta,\lambda}$. Hence $U^{\theta} \supset \bigcap_{\lambda \in \mathbb{Y}(G)} U^{\theta,\lambda}$.

Choosing an embedding of $U / \Gamma$ into some linear representation of $G$, the same argument shows that $(U/\Gamma)^{\theta} = \bigcap_{\lambda \in \mathbb{Y}(G)} (U/\Gamma)^{\theta,\lambda}$. Hence the equality $U^{\theta} / \Gamma = (U/\Gamma)^{\theta}$ follows from \cref{lem:HMCsWact} below.
\end{proof}

\begin{lemma}\label{lem:HMCsWact}
Let $U$ be a finite-dimensional linear representation of $\Gm \times \Gamma$ and let $\theta  \in \Z_{> 0}$, thought of as a character of $\Gm$. Then $U^{\theta} / \Gamma \;=\; (U/\Gamma)^{\theta}$. 
\end{lemma}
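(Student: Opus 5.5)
We will compare the two semistable loci by writing down their complements explicitly. Since $\Gm$ and $\Gamma$ commute, decompose $U = \bigoplus_{n \in \Z} U_n$ into $\Gm$-weight spaces; each $U_n$ is $\Gamma$-stable. By the description used in the proof of \cref{lem:GITtheWTequal}, the unstable locus for $\theta>0$ is
\[
U \setminus U^{\theta} = \{ u \in U : \lim_{t \to 0} t \cdot u \text{ exists} \} = \bigoplus_{n \ge 0} U_n ,
\]
up to the sign convention relating $\theta$ to the weights of semi-invariants, which we fix once and keep throughout. This set is $\Gamma$-stable. Hence its image $\pi(U \setminus U^{\theta})$ is closed, and $U^{\theta}/\Gamma = \pi(U^{\theta})$ is exactly its complement in $U/\Gamma$. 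So it suffices to show that a point $\pi(u)$ is $\theta$-unstable in $U/\Gamma$ if and only if $u \in \bigoplus_{n\ge 0} U_n$.

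We will argue directly with semi-invariants rather than re-embedding $U/\Gamma$. The coordinate ring of $U/\Gamma$ is $\C[U]^{\Gamma}$, and its $\Gm$-semi-invariants of weight $m\theta$ are $(\C[U]_{m\theta})^{\Gamma}$, since the two actions commute. One inclusion is immediate. If $f \in (\C[U]_{m\theta})^{\Gamma}$ satisfies $f(u) \neq 0$, then $f$, viewed on $U$, is a semi-invariant of weight $m\theta$ that does not vanish at $u$. Thus $\pi^{-1}((U/\Gamma)^{\theta}) \subset U^{\theta}$.

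For the converse, suppose $f \in \C[U]_{m\theta}$ with $f(u)\ne 0$. We will form the norm $F = \prod_{\gamma\in\Gamma} \gamma\cdot f$. Each $\gamma \cdot f$ is again a semi-invariant of weight $m\theta$, because $\Gamma$ commutes with $\Gm$. Hence $F \in (\C[U]_{|\Gamma| m\theta})^{\Gamma}$. However, $F(u) = \prod_\gamma f(\gamma^{-1}u)$ might vanish. To fix this, we use that $U^{\theta}$ is $\Gamma$-stable: each $\gamma^{-1}u$ lies in $U^{\theta}$. So for each $\gamma$ there is a semi-invariant $f_\gamma$, of some weight $m_\gamma\theta$, with $f_\gamma(\gamma^{-1}u)\neq 0$. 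Raising these to suitable powers, we may assume all have a common weight $M\theta$. A generic linear combination $h = \sum c_\gamma f_\gamma$ then does not vanish at any point of the finite orbit $\Gamma u$. Its norm $\prod_\gamma \gamma\cdot h$ is a $\Gamma$-invariant semi-invariant of weight $|\Gamma|M\theta$ that is nonzero at $u$. This shows $\pi(u)\in (U/\Gamma)^\theta$, and hence $U^\theta/\Gamma = (U/\Gamma)^\theta$.

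The main obstacle is this nonvanishing step: the naive norm of a single semi-invariant can vanish on the orbit. The generic linear combination of common-weight semi-invariants, one for each orbit point, is exactly what overcomes it. The argument never uses that $U$ is linear, so in fact we could bypass the reduction in \cref{lem:GITtheWTequal} entirely.
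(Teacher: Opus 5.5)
Your proof is correct, but it takes a different route from the paper.

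\textbf{The paper's route.} The paper works with the weight decomposition. It writes $u = z + u^-$, with $u^-$ in the sum of the weight spaces of the appropriate sign, and notes that $u^- \neq 0$ by semistability. It then uses that the fibre over $0$ of $U^- \to U^-/\Gamma$ is just the origin. This produces a $\Gamma$-invariant, $\Gm$-homogeneous function on $U^-$ of positive weight that does not vanish at $u^-$. Pulled back along the projection $U \to U^-$, it certifies $\pi(u) \in (U/\Gamma)^{\theta}$. That argument needs linearity and the explicit shape of the unstable locus. This is why the paper first reduces to a linear $\Gm$-action via an equivariant embedding and the Hilbert--Mumford criterion in \cref{lem:GITtheWTequal}.

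\textbf{Your route.} Your norm argument uses only that $\Gamma$ commutes with $\Gm$, so $\Gamma$ preserves each space of semi-invariants $\C[U]_{m\theta}$. It therefore shows $\pi^{-1}\bigl((Y/\Gamma)^{\theta}\bigr) = Y^{\theta}$ for any affine variety $Y$ with commuting actions of $\Gamma$ and an arbitrary group $G$, and any character $\theta$. So, as you observe, it proves \cref{lem:GITtheWTequal} directly, with no embedding and no Hilbert--Mumford criterion. It is also insensitive to the sign convention that the weight-space argument has to track. It is essentially the same trick the paper itself uses for relative base loci in the proof of \cref{thm:pistarmorifan}(ii): take a generic element of the span of the $\Gamma$-translates, then take the product over $\Gamma$.

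\textbf{Two simplifications.} Your first paragraph's description of the unstable locus as $\bigoplus_{n \ge 0} U_n$ is never actually used, so it can be dropped. You also do not need the $\Gamma$-stability of $U^{\theta}$ or the raising to a common weight. The translates $\gamma \cdot f$ of your single $f$ all lie in $\C[U]_{m\theta}$, and $(\gamma \cdot f)(\gamma u) = f(u) \neq 0$, so a generic linear combination of them already avoids vanishing on the orbit $\Gamma u$.
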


\begin{proof}
We decompose $U = \bigoplus_{n \in \mathbb{Z}} U_{n}$ as a $\Gm$-module, where each $U_{n}$ is a $\Gamma$-module. Write $\mathcal{O}(U) = \C[x_1, \ds, x_k]$, where each $x_i$ is an eigenvector for $\Gm$, $k = \dim U$. Then $U \setminus U^{\theta} = Z$, where 
\[
  Z = V(x_{i} \mid \deg(x_{i}) > 0) = \bigoplus_{n \geq 0} U_{n}.
\]
We write $U = Z \oplus U^{-}$, where $U^{-} = \bigoplus_{n < 0} U_{n}$.  

Now
\[
  (U/\Gamma) \setminus (U^{\theta}/\Gamma) = Z/\Gamma = V(I^{\Gamma}),
\]
where $I$ is the ideal in $\mathbb{C}[x_{1},\dots,x_{k}]$ generated by the set $\{x_{i} \mid \deg(x_{i}) > 0\}$.  

On the other hand,
\[
  (U/\Gamma) \setminus (U/\Gamma)^{\theta} = V(J),
\]
where $J$ is the ideal generated by the space $\mathbb{C}[U]^{\Gamma}_{>0}$ of $\Gamma$-invariant functions of positive weight with respect to $\Gm$. Since $J \subset I^{\Gamma}$, we have $Z/\Gamma \subset V(J)$, or equivalently, $(U/\Gamma)^{\theta} \subset U^{\theta}/\Gamma$. 

It remains to prove the converse inclusion. Let $[u] \in U^{\theta}/\Gamma$ with $u \in U^{\theta}$. We decompose $u = z + u^{-}$ with $z \in Z$, $u^{-} \in U^{-}$.  
By assumption, $u^{-} \neq 0$.  

The fibre over $0$ of the morphism $U^{-} \to U^{-}/\Gamma$ is just the origin $\{ 0 \}$. Therefore, there exists $f \in \mathbb{C}[U^{-}]^{\Gamma}_{+}$ such that $f(u^{-}) \neq 0$.  
We may assume that $f$ is homogeneous with respect to the $\Gm$-action.  
Thus $f \in \mathbb{C}[U]^{\Gamma}_{>0}$ and $f(u) = f(u^{-}) \neq 0$. Hence $[u] \in (U/\Gamma)^{\theta}$.  
This proves the desired equality.
\end{proof}

  \section{Birational geometry and symplectic singularities} \label{sec:4.Birgeom}
  \subsection{Birational geometry and Mori dream spaces}\label{sec:birationaldefn}

In this section, we recall the basic definitions and results from the minimal model program that we require. Since we wish to describe certain partial resolutions of an affine symplectic singularity, we work in the relative setting, as in \cite{KawamataMatsudaMatsuki}. In particular, we wish to classify all \textit{$\Q$-factorial terminalizations} of our singularity.
The fact that these $\Q$-factorial terminalizations exist (and are finite) is the famous result \cite{BCHM}. However, we can construct them directly in our case and do not require this abstract existence result.  

Let $Y$ be a normal variety such that its canonical class $K_Y$ is $\Q$-Cartier. Let $\rho: X \rightarrow Y$ be a resolution of singularities. If we denote by $E_i$ the prime $\rho$-exceptional divisors, then
\begin{equation}   
\label{terminal_resolution}
K_X= \rho^*K_Y+\sum a(E_i,Y)E_i, \quad \textrm{for some $a(E_i,Y) \in \Q$,}
\end{equation}
in $\mathrm{Cl}(X)$. The coefficients $a(E_i,Y)$ are called \textit{discrepancies} of $E_i$ with respect to $Y$.    Moreover, if all $a(E_i,Y)=0$ then such a resolution is called \textit{crepant}. 

\begin{defn}\label{defn:do-not-use-terminal}
A normal variety $X$ is said to have terminal singularities if there exists a resolution $\rho$ such that all the coefficients $a_i$ in \eqref{terminal_resolution} are positive.  
\end{defn}

\begin{defn} 
Let $X$ and $Y$ be normal quasi-projective varieties and $\rho \colon X \to Y$ a projective morphism. 
    \begin{enumerate}[label=(\roman*), ref=\thedefn(\roman*)]
        \item \label[Definition]{defn:algfibrespace} 
        We say that $\rho$ is an \emph{algebraic fiber space} if $\rho_{*}\mathcal{O}_{X} = \mathcal{O}_{Y}$.
        \item \label[Definition]{dfn:crepant-partial} Suppose in addition to (i) that the canonical divisor $K_Y$ is $\Q$-Cartier, $\rho$ is crepant and an isomorphism over the regular locus. 
        We then call $\rho$ \textit{crepant projective partial resolution.}
        \item \label[Definition]{dfn:q-factorial_terminalisation} Suppose also that $X$ is $\Q$-factorial and has terminal singularities. 
        Then $\rho$ is called a $\Q$\textit{-factorial terminalisation.}
    \end{enumerate}
\end{defn}
Throughout the remainder of \cref{sec:birationaldefn}, we assume that $\rho \colon X \to Y$ is a fixed algebraic fiber space. 
We will also assume our (partial) resolutions to always be projective and omit the word projective starting from now.

\begin{defn} \label{defn:picard_united_general_notation}
    We will be using the following objects and notation for them in this paper:
    \begin{enumerate}
        \item We mainly consider the vector space $\mathrm{Pic}(X)_{\Q}$. An element of $\mathrm{Pic}(X)_{\Q}$ will be called a \textit{fractional line bundle}. Note that some multiple of a fractional line bundle is represented by an actual line bundle.  
        \item Assume now that $\rho \colon X \to Y$ is a projective morphism between normal varieties. The \textit{relative Picard group} is $\Pic(X/Y ) := \Pic(X)/ \rho^* \Pic(Y )$.
        \item Recall that a line bundle $\mc{L}$ is \textit{ample} if  $\mc{L}^{\otimes m}$ is \textit{very ample} for some $m\in \N$, i.e., its global sections give an embedding to a projective space. Now a line bundle $\mc{L}$ on $X$ is $\rho$\textit{-ample} if for every affine open subset $U \subset Y$ the restriction $\mc{L}_{\rho^{-1}(U)}$ is an ample line bundle on $\rho^{-1}(U)$.
        \item A line bundle $\mc{L} \in \Pic(X/Y )_{\Q}$ is $\rho$-\textit{nef} if $\deg \mc{L}|_{\ell} \ge 0$ for every (proper) curve $\ell$ contracted by $\rho$.  
        \item We say that a (fractional) line bundle $\lineb$ is $\rho$\textit{-semiample} (or semiample over $Y$) if there exists a morphism $X \to X'$ over $Y$ and $m > 0$ such that $\lineb^{\otimes m}$ is the pull-back of a line bundle $M$ on $X'$ which is $\rho'$-ample, where $\rho' \colon X' \to Y$ is the structure morphism. 
        \item The line bundle $\lineb$ is $\rho$\textit{-movable} if the support of the cokernel of $\rho^* \rho_* \lineb^{\otimes m} \to \lineb^{\otimes m}$ has codimension at least two in $X$ for all sufficiently divisible positive integers $m$. 
        \item The \textit{N\'eron–Severi space} $N^1(X/Y )_{\Q} := \Pic(X/Y )_{\Q}/ \equiv$ is the vector space of numerical equivalence classes of line bundles, where $\lineb \equiv \lineb'$ if and only if $\deg(\lineb|_{\ell}) = \deg(\lineb'|_{\ell})$ for every proper curve $\ell$ in $X$ contracted by $\rho$. As noted in \cite[Page~291]{KawamataMatsudaMatsuki}, the $\Q$-vector space $N^1(X/Y )_{\Q}$ is finite-dimensional. 
        \item The \textit{relative ample cone} $\on{Amp}(X/Y )$ is the open cone in $N^1(X/Y )$ which consists of $\rho$-ample divisors.  
        \item The \textit{nef cone} of $X$ over $Y$ is the closed convex cone $\mr{Nef}(X/Y )$ in $N^1(X/Y)$ generated by line bundles on $X$ that are $\rho$-nef. The relative version of Kleiman’s ampleness criterion \cite[Theorem~0-1-2]{KawamataMatsudaMatsuki} implies that the relative ample cone $\mr{Amp}(X/Y )$ is the interior of $\mr{Nef}(X/Y )$.
        \item The \textit{movable cone} $\on{Mov}(X/Y)$ of $X$ over $Y$ is the closed convex cone in $N^1(X/Y)$ generated by (classes of) $\rho$-movable line bundles.
        \item A rational map $\phi \colon X \dashrightarrow X'$ (over $Y$) between normal varieties $X,X'$ projective over $Y$ is a \emph{rational contraction} if for some resolution (equivalently, for any resolution) $(p, q) \colon W \to X \times X'$ of the graph of $\phi$ such that $W$ is nonsingular, every $p$-exceptional effective divisor $E$ on $W$ satisfies $q_* \mathcal{O}_W(E) = \mathcal{O}_{X'}$ (see \cite[0.1~Definitions]{HuKeel} and \cite[Definition~2.8]{Ohta}). In the case the contraction is birational, this is equivalent to $E$ being $q$-exceptional.  
    \end{enumerate}
\end{defn}

If $\psi \colon X \dashrightarrow X'$ is a rational contraction and $\psi$ is birational then it is called a \textit{birational contraction}. 

\begin{defn}\label{defn:SQM}
   A \textit{small $\Q$-factorial modification} of $X$ is an algebraic fiber space $X' \to Y$, with birational contraction $\begin{tikzcd} X \ar[r,dotted,"\psi"] & X' \end{tikzcd}$ over $Y$ such that
\begin{enumerate}
    \item $X,X'$ are $\Q$-factorial,
    \item $\psi$ is an isomorphism in codimension one.
\end{enumerate}  
\end{defn}

Part (2) of \cref{defn:SQM} implies that for a resolution $(p, q) \colon W \to X \times X'$, an effective divisor $E$ is $p$-exceptional if and only if it is $q$-exceptional.

If $\phi \colon X \dashrightarrow X'$ is a birational map between normal varieties which is an isomorphism $\phi_U \colon U \to U'$, with $\mathrm{codim}_{X'} X' \setminus U' \ge 2$ then the \textit{strict transform along $\phi$} of a Weil divisor $D \subset X'$ is 
\[
\phi^* D := \overline{\phi_U^* (D _{U'})}.
\]
If $X \stackrel{\psi}{\dashrightarrow} X'$ is a small $\Q$-factorial modification then there is a canonical identification $\psi^* \colon \mathrm{Pic}(X'/Y)_{\Q} \iso \mathrm{Pic}(X/Y)_{\Q}$ given by taking the strict transform of the corresponding Weil divisor and realising some multiple of this as a Cartier divisor on $X'$. 

Let $Y$ be a normal variety. If $\rho \colon X \to Y$ is a proper birational morphism from a normal variety $X$ and $E \subset X$ a prime divisor such that $\rho(E)$ has codimension at least two in $Y$ then we say that $E$ is \textit{contracted} by $\rho$. The prime divisor $E$ defines a discrete valuation $\nu_E$ on the field $K(X) = K(Y)$. Conversely, given a discrete valuation $\nu$ on $K(Y)$, we say that $\nu$ is \textit{exceptional over $Y$} if there exists a projective birational morphism $\rho \colon X \to Y$, with $X$ normal, and a prime divisor $E \subset X$ contracted by $\rho$ such that $\nu = \nu_E$. Note that for a given $\nu$ there will be infinitely many pairs $(\rho \colon X \to Y,E)$ with $\nu = \nu_E$ since we can pick one pair and blowup away from $E$ to get another. In this way, we can also assume that $X$ is smooth; that is, $\rho$ is a resolution of singularities. If the canonical divisor $K_Y$ is $\Q$-Cartier then the discrepancy $a(E,Y)$ depends only on $\nu$ and not the choice of pair $(\rho \colon X \to Y, E)$. Thus, we can talk about the discrepancy $a(\nu,Y)$ of a discrete valuation $\nu$ exceptional over $Y$.    

\begin{lemma}\label{lem:Qfactorialterminaliffsmallmodification}
   Let \(Y\) be a normal variety whose canonical divisor $K_Y$ is $\Q$-Cartier and let $\rho \colon X \to Y$ be a projective \(\mathbb{Q}\)-factorial terminalization. Let $\xi \colon X'\to Y$ be a projective birational morphism, where \(X'\) is normal and \(\mathbb{Q}\)-factorial. Then \(\xi \) is a \(\mathbb{Q}\)-factorial terminalization of \(Y\) if and only if the induced birational map $\phi\colon X\dashrightarrow X'$ is a small $\Q$-factorial modification. 
\end{lemma}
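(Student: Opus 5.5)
Both directions will be proved by comparing discrepancies on a common resolution. Fix a resolution $(p,q)\colon W \to X \times X'$ of the graph of $\phi$, with $W$ smooth, and write $\tau := \rho\circ p = \xi \circ q \colon W \to Y$. Since $\rho$ is crepant we have $K_X = \rho^* K_Y$, and hence
\[
K_W = \tau^*K_Y + \sum_F a(F,Y) F = p^*K_X + \sum_F a(F,X)F ,
\]
so $a(F,X) = a(F,Y)$ for every prime divisor $F$ on $W$. Because $X$ is terminal, $a(F,Y) > 0$ for every $p$-exceptional $F$. Here crepancy of $\rho$ means $a(E,Y) = 0$ for every prime divisor $E$ of $X$ contracted by $\rho$, and I will use this characterization throughout.

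\emph{($\Leftarrow$)} Assume $\phi$ is a small $\Q$-factorial modification. Then $p$-exceptional and $q$-exceptional divisors coincide, and the divisors of $X$ and $X'$ correspond under strict transform. Hence $\xi$ contracts exactly the strict transforms of the divisors contracted by $\rho$. These have discrepancy $0$, so $\xi$ is crepant; that is, $K_{X'} = \xi^*K_Y$ as $\Q$-Cartier divisors. For $\Q$-factoriality, we can note either that it is assumed or that it is part of the definition of a small $\Q$-factorial modification. For terminality, let $\nu$ be any valuation exceptional over $X'$. Then $a(\nu,X') = a(\nu,Y)$. If $\nu$ is also exceptional over $X$, this equals $a(\nu,X) > 0$. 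If instead $\nu$ is the valuation of a divisor on $X$, its centre on $X'$ would be a divisor, since $\phi$ is an isomorphism in codimension one; this contradicts $\nu$ being exceptional over $X'$. So $X'$ is terminal. The algebraic fiber space and isomorphism-over-the-regular-locus conditions follow because $\xi$ is birational and projective with $Y$ normal (Zariski's main theorem). Over the regular locus, a crepant birational morphism from a terminal variety to a smooth one cannot contract divisors, since those have discrepancy $\ge 1$; then $\Q$-factoriality forces the exceptional locus to be divisorial, so the morphism is an isomorphism there.

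\emph{($\Rightarrow$)} Assume $\xi$ is a $\Q$-factorial terminalization. We need $\phi$ to be an isomorphism in codimension one, i.e. no prime divisor of $X$ is $q$-exceptional, and symmetrically. Let $E$ be a prime divisor on $X$ whose strict transform on $W$ is $q$-exceptional. Since $X'$ is terminal, $a(E,X') > 0$. On the other hand, $a(E,X') = a(E,Y)$ because $\xi$ is crepant. If $E$ is $\rho$-exceptional then $a(E,Y) = 0$ by crepancy of $\rho$. If $E$ is not $\rho$-exceptional, its centre on $Y$ is a divisor; that centre is also the image of the $q$-exceptional divisor, so it is both a divisor and of codimension $\ge 2$ (birational morphisms between normal varieties biject non-exceptional divisors), a contradiction. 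Either way we reach a contradiction. Swapping the roles of $X$ and $X'$ gives the symmetric statement. Hence $\phi$ is an isomorphism in codimension one. It is then a birational contraction in the sense of the definition, since $p$- and $q$-exceptional divisors coincide, and both $X$ and $X'$ are $\Q$-factorial, so $\phi$ is a small $\Q$-factorial modification.

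The main obstacle is the bookkeeping around the definitions rather than the discrepancy argument itself. First, the paper's Definition~\ref{defn:do-not-use-terminal} of terminality refers to some resolution, and I will need the standard equivalence with the condition "$a(\nu,\cdot) > 0$ for \emph{all} exceptional $\nu$". Second, I have to verify carefully that $K_{X'}$ being $\Q$-Cartier ($\Q$-factoriality) together with $a(F,Y) = 0$ on the contracted divisors really gives $K_{X'} = \xi^*K_Y$. This holds because $K_{X'} - \xi^*K_Y$ is then a $\Q$-Cartier divisor supported away from all divisors of $X'$, hence zero.
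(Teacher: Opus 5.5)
Your proposal is correct and follows essentially the same discrepancy-comparison argument as the paper. For ($\Leftarrow$) you get crepancy of $\xi$ from the divisor correspondence, and terminality from the fact that any valuation exceptional over $X'$ is also exceptional over $X$. For ($\Rightarrow$) your contradiction argument on a $q$-exceptional divisor is a contrapositive form of the paper's observation: the exceptional divisors of both $X \to Y$ and $X' \to Y$ are exactly the valuations exceptional over $Y$ with discrepancy zero. You also verify the algebraic fibre space and isomorphism-over-$Y_{\mathrm{reg}}$ conditions, which the paper leaves implicit. In that step, make sure the purity argument is applied using the smoothness (hence factoriality) of the target, not the $\Q$-factoriality of $X'$.
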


\begin{proof}
Suppose first that \(\phi\) is an isomorphism in codimension one. Since
\(\rho\) is crepant, \(K_X=\rho^*K_Y\). Since $\phi$ is an isomorphism in codimension one, $K_{X}$ is the strict transform of $K_{X'}$ and hence $K_{X'}=\xi^* K_Y$ too. Thus, \( \xi\) is crepant.

Let \( \nu \) be a discrete valuation exceptional over \(X'\), meaning that there exists a proper birational morphism $f \colon Z \to X'$ with $Z$ normal, a prime divisor $E \subset Z$ with $f(E)$ of codimension at least two and $\nu = \nu_E$. Resolving $Z$ further if necessary, we may assume there also exists $g \colon Z \to X$ proper and birational. Since \(\phi\) is an
isomorphism in codimension one, \( \nu \) is also exceptional over \(X\): indeed, if the center $g(E)$ of $E$ on \(X\) were a prime divisor, its strict transform
on \(X'\) would be a prime divisor representing the same divisorial valuation $\nu$, contradicting the fact that \( \nu \) is exceptional over \(X'\). By crepancy,
\begin{equation*}
a(\nu,X')=a(\nu,Y)=a(\nu,X).
\end{equation*}
Since \(X\) is terminal and \( \nu \) is exceptional over \(X\), \(a(\nu,X)>0\) and hence \(a(\nu,X')>0\). This proves that \(X'\) is terminal. Therefore, \(\xi\) is a \(\mathbb{Q}\)-factorial terminalization.

Now suppose that \(\xi\) is a \(\mathbb{Q}\)-factorial terminalization. If $E \subset X$ is an exceptional divisor over \(Y\) then $a(E,Y)=0$ since $\rho$ is crepant. Conversely, if $\nu$ is a discrete valuation exceptional over \(Y\) with $a(\nu,Y)=0$ then $\nu$ cannot be exceptional over $X$: otherwise, the arguments above show that $a(\nu,X)=a(\nu,Y)=0$, which would contradict the fact that \(X\) is terminal. Thus, for $\nu$ a discrete valuation exceptional over \(Y\), $\nu = \nu_E$ for some $E \subset X$ if and only if $a(\nu,Y) = 0$. The same argument applies to \(X'\). Thus, the exceptional divisors of \(X \to Y\) and \(X' \to Y\) are the same. The non-exceptional prime divisors on $X$ and $X'$ are the strict transforms of prime divisors on \(Y\). Hence \(\phi\) contracts no divisor in either direction, and therefore is an isomorphism in codimension one.
\end{proof}

\begin{remark}\label{rem:Morifan}
Let \(Y\) be a normal \(\mathbb{Q}\)-Gorenstein variety and let $\rho \colon X\to Y$ be a projective \(\mathbb{Q}\)-factorial terminalization. 
Then one can show that $X \to Y$ is a relative Mori dream space. The rational contractions $\phi \colon X \dashrightarrow X'$ over $Y$ with $X'$ normal and projective over $Y$ are in one-to-one correspondence with the crepant projective partial resolutions of $Y$. Indeed, for a rational contraction $\phi \colon X \dashrightarrow X'$  \cite[Proposition~4.1]{Ohta} explains that there exists a small birational modification $X_i \to Y$ factoring as $X_i \to X' \to Y$. 
Since \cref{lem:Qfactorialterminaliffsmallmodification} says that $X_i \to Y$ is a  \(\mathbb{Q}\)-factorial terminalization, this implies that $X' \to Y$ is a projective crepant partial resolution. Conversely, we can extend a projective crepant partial resolution $X' \to Y$ to a $\Q$-factorial terminalisation $X_1 \to Y$ \cite{BCHM}. Then  \cref{lem:Qfactorialterminaliffsmallmodification} says that $X_1 \to Y$ is a small $\Q$-factorial modification of $X \to Y$, thus $X \dashrightarrow X'$ is a rational contraction.   
\end{remark}

Let $\tau \colon X \to X_0$ be a birational algebraic fiber space over $Y$. This means that all the morphisms $X \to Y, X_0 \to Y$ and $X \to X_0$ are algebraic fiber spaces. In particular, $X_0$ is also normal and $\tau$ is projective birational with $\tau_* \mc{O}_X = \mc{O}_{X_0}$. If the exceptional locus of $\tau$, denoted $\mathrm{Exc}(\tau)$, does not contain a divisor we call $\tau$ a \textit{small contraction}. 

Suppose $\tau$ is a small contraction and let $L$ be a line bundle on $X$ such that $L^{\otimes -1}$ is $\tau$-ample. The \textit{flip} of $\tau$ with respect to $L$ is a
commutative diagram
\[
\begin{tikzcd}
    X \ar[rr,dotted,"\psi"] \ar[dr,"\tau"'] & & X' \ar[dl,"{\tau'}"] \\
    & X_0 & 
\end{tikzcd}
\]
where $\tau'$ is also a small contraction, $\psi$ is an isomorphism in codimension one and the strict transform of $L$ along $\psi$ is $\tau'$-ample.

The existence and termination of flips is a subtle problem in general, see for instance \cite{BCHM}. But when they exist they are unique \cite[Proposition~5-1-11(2)]{KawamataMatsudaMatsuki}. 
We will always deal with (relative) Mori dream spaces, where the existence and termination of flips is known. Indeed, Mori dream spaces, as introduced by Hu-Keel \cite{HuKeel}, are projective varieties with properties specifically chosen so that the minimal model program can be run on them. In this paper, we are interested in relative Mori dream spaces as defined in \cite{Ohta}, where the base $Y$ will be assumed affine.

\begin{defn}\label{defn:MDS}
    Let $Y$ be an affine variety. Then $\rho \colon X \to Y$ is a 
\emph{relative Mori dream space} if $\rho$ is an algebraic fiber space 
and the following conditions hold:
\begin{enumerate}
  \item $X$ is $\mathbb{Q}$-factorial,
  \item $\operatorname{Pic}(X/Y)_{\mathbb{Q}} = N^{1}(X/Y)_{\mathbb{Q}}$, 
  \item $\mathrm{Nef}(X/Y)$ is a rational polyhedral cone generated by finitely many $\rho$-semiample divisors.
  \item there exist finitely many small $\mathbb{Q}$-factorial modifications $X_{1}, \dots, X_{k}$ over $Y$ satisfying (1),(2) and (3) such that the birational maps $g_{i} \colon X \dashrightarrow X_{i}$ satisfy  
  \begin{equation}\label{eq:movunion}
       \operatorname{Mov}(X/Y) = \bigcup_{i=1}^{k} g_{i}^{*} \operatorname{Nef}(X_{i}/Y).
  \end{equation}
\end{enumerate}
\end{defn}

In particular, $X,Y$ are normal and $\rho$ is projective since $\rho$ is assumed to be an algebraic fiber space; see \cref{defn:algfibrespace}. 

If $\phi \colon X \dashrightarrow X'$ is a rational contraction over $Y$ as in \cref{defn:picard_united_general_notation} (11) then \cite[Proposition~4.1]{Ohta} says that there exists a small $\Q$-factorial modification $X_i$ and a morphism $h \colon X_i \to X'$ such that $\phi = h \circ g_i$. Then $\phi^* = g_i^* \circ h^* \colon \Pic(X'/Y)_{\Q} \to \Pic(X/Y)_{\Q}$ is well-defined, and in fact is injective by \cite[Corollary~4.2]{Ohta}. Therefore, given a rational contraction $\phi$, the image $\phi^* \operatorname{Nef}(X'/Y)$ in $\Pic(X/Y)_{\Q}$ of the cone $\operatorname{Nef}(X'/Y)$ is a rational polyhedral cone. 

Let $\mathcal{M}(X/Y)$ be the set of all faces of all the rational polyhedral cones $g_{i}^{*} \operatorname{Nef}(X_{i}/Y)$ appearing in \eqref{eq:movunion}. Then it is proven in \cite[Theorem~4.4]{Ohta} that $\mathcal{M}(X/Y)$ is a fan whose support equals $\operatorname{Mov}(X/Y)$. Moreover, it is shown that $\phi \mapsto \phi^*\operatorname{Nef}(X'/Y)$ defines a bijection between the set of all rational contractions $\phi \colon X \dashrightarrow X'$ over $Y$ and $\mathcal{M}(X/Y)$. 
We call $\mathcal{M}(X/Y)$ the \textit{Mori fan} of the relative Mori dream space $X \to Y$. 

\begin{defn}
Let $\Lb = \{L_{1}, \dots, L_{n}\}$ be a collection of line bundles on $X$. The \emph{section ring} of $\Lb$ is
\[
  R(X, \Lb) = \bigoplus_{i \in \mathbb{Z}^{n}} 
  \Gamma\!\left(X, L^{\otimes i}\right),
\]
where $L^{\otimes i} = L_{1}^{\otimes i_{1}} \otimes \cdots \otimes L_{n}^{\otimes i_{n}}$. If $L_{1}, \dots, L_{n}$ form a basis of $\operatorname{Pic}(X/Y)_{\mathbb{Q}}$, then the section ring is called the \emph{Cox ring} of $X$ over $Y$. 
\end{defn}

A key result of \cite{Ohta}, upgrading the non-relative statement of \cite{HuKeel},  says that a  algebraic fiber space $\rho \colon X \to Y$ (with $Y$ being affine, $X$ being $\Q$-factorial and $\operatorname{Pic}(X/Y)_{\mathbb{Q}} = N^{1}(X/Y)_{\mathbb{Q}}$) is a relative Mori dream space if and only if the Cox ring of $X$ over $Y$ is finitely generated. For the remainder of this section, \textit{we assume that $\rho$ is a relative Mori dream space.}

Let $\Lb = \{L_{1}, \dots, L_{n}\}$ form a basis of $\operatorname{Pic}(X/Y)_{\mathbb{Q}}$ and denote $V = \operatorname{Spec}(R(X,\Lb))$.
Note that the $\Z^n$-grading on $R(X,\Lb)$, that comes from the basis $\Lb$ of $\operatorname{Pic}(X/Y)_{\mathbb{Q}}$, translates into an action of $T := \Gm^n$ on $V$. 
In this way the space of rational characters $\mathbb{X}^{\bullet}(T)_{\Q}$ is identified with $\operatorname{Pic}(X/Y)_{\mathbb{Q}}$.

The remainder of this section is devoted to the reconstruction of this identification that results in \cref{thm:LCNefMori}. While it may seem a tautology, rephrasing this map in the new way helps us identify the Mori fan of $X \to Y$ with a subfan of the GIT fan.

Crucially, if $\theta \in \mathbb{X}^{\bullet}(T)_{\Q}$ is mapped  to the class of a $\rho$-ample line bundle in $\operatorname{Pic}(X/Y)_{\mathbb{Q}}$ then \cite[Proposition~6.12]{Ohta} says that we can recover $X$ from $V$ as the GIT quotient $X_\theta$, 
\begin{equation} \label{eqn:variety_reconstruction_from_line_bundle}
    X_\theta := V \git_{\theta} T.
\end{equation}
Moreover, \text{loc. cit.} says that $\theta$ is generic with respect to $T$ acting on $V$. 

For each $\zeta \in \mathbb{X}^{\bullet}(T)$ we denote by $\CO_{\zeta}$ the trivial line bundle on $V$, twisted by $\zeta$. If $\zeta \in \mathbb{X}^{\bullet}(T)_{\Q} $ then we choose $m \in \Z_{> 0}$ such that $m \zeta \in \mathbb{X}^{\bullet}(T)$ and define 

\begin{equation*}
\alpha: \mathbb{X}^{\bullet}(T)_{\Q} \rightarrow  (\Pic_T(V)/\Pic(Y))_{\Q}; \hspace{5pt}   \alpha(\zeta) = \frac{1}{m} [\CO_{m\zeta}],
\end{equation*}
where $[\CO_{m\zeta}]$ denotes the class of the $T$-equivariant line bundle $\CO_{m\zeta}$ in $(\Pic_T(V)/\Pic(Y))_{\Q}$. 

Pull-back $q_{\theta}^*$ under the GIT quotient morphism $q_{\theta}$ maps line bundles $L \in \Pic(X_\theta)$ to $T$-equivariant line bundles $q_{\theta}^* L \in \Pic_T(V^{\theta})$. This map is injective by \cite[Proposition~5.1]{KKV} and induces an injection of vector spaces 
\[
q_{\theta}^* \colon \Pic(X_\theta / Y)_{\Q} \hookrightarrow (\Pic_T(V^{\theta}) / \Pic(Y))_{\Q}.
\]
Since $\theta$ is generic, the stabilizer of any point in $V^{\theta}$ is finite and only finitely many subgroups of $T$ can occur as stabilizers of points in $V^{\theta}$. Therefore, for any $\zeta \in \mathbb{X}^{\bullet}(T)_{\Q} $ there exists a sufficiently divisible integer $m$ such that $m \zeta \in \mathbb{X}^{\bullet}(T)$ and $m \zeta |_{T_p} = 1$ for all $p \in V^{\theta}$. Then Kempf's descent Lemma \cite[Proposition~4.2]{KKV} says that there exists a line bundle $L \in \Pic(X_\theta)$ such that $q_{\theta}^* L \cong \CO_{m\zeta}$ as $T$-equivariant line bundles. In other words, the morphism $q_{\theta}^*$ is also surjective. Hence, we obtain the \textit{linearisation} map
\begin{equation}\label{eq:Lthetamap}
    L_{\theta}: \mathbb{X}^{\bullet}(T)_{\Q} \rightarrow \Pic(X_\theta/Y)_{\Q}, \quad L_{\theta} = (q_{\theta}^*)^{-1} \circ \alpha. 
\end{equation}
We note that both $q_{\theta}$ and $\alpha$ only depend on the open set $V^{\theta}$ defined by $\theta$. Therefore, if $\theta' \in \mathbb{X}^{\bullet}(T)_{\Q} $ such that $V^{\theta} = V^{\theta'}$ then $X_{\theta} = X_{\theta'}$ and $L_{\theta} = L_{\theta'}$. In other words, $L_{\theta}$ depends only on the chamber $C$ in which $\theta$ lies. Therefore, we follow conventions and write $L_C := L_{\theta}$.



Let $\Sigma(V)$ be the GIT fan for the action of $T$ on $V$. As in \cite[Theorem~6.7]{Ohta}, which is a relative version of \cite[Theorem~2.3]{HuKeel}, we have: 

\begin{theorem}\label{thm:LCNefMori}
The linearisation map $L_C \colon \mathbb{X}^{\bullet}(T)_{\mathbb{Q}} \to \operatorname{Pic}(X/Y)_{\mathbb{Q}}$ is an isomorphism which identifies the closure $\overline{C}$ of the chamber $C$ with $\mathrm{Nef}(X/Y)$. Moreover, the Mori fan equals the subfan of $L_C(\Sigma(V)) := \{ L_C(\sigma) \, | \, \sigma \in \Sigma(V) \}$ consisting of all cones contained in the movable cone $\on{Mov}(X/Y)$. 
\end{theorem}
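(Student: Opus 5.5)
The plan is to deduce the statement from Ohta's relative Hu--Keel theory, via \cite[Theorem~6.7]{Ohta}. What needs checking is that the conventions set up above, namely the map $\alpha$, the descent isomorphism $q_\theta^*$ and hence $L_C$, match Ohta's. We proceed in three steps.

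First we show that $L_C$ is an isomorphism. The map $\alpha$ sends a rational character to the class of the twisted trivial bundle. Since $V=\Spec R(X,\Lb)$ is built from a basis $L_1,\dots,L_n$ of $\Pic(X/Y)_\Q$, the character $e_i$ should correspond under $L_C$ to $L_i$. Concretely, the GIT quotient $V^\theta\git T=X$ identifies $V^\theta$ (up to codimension two) with the total space of the $T$-torsor $\bigoplus_{\mathbb Z^n}L^{\otimes i}$ minus zero sections. Under this identification, the descent of $\CO_{e_i}$ is $L_i$ (or its inverse, depending on the sign convention in $A_{n\chi}$, which we fix accordingly). Hence $L_C$ sends a basis to a basis and is an isomorphism. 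Independence of the chamber is already noted above.

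Second we identify $\overline C$ with $\mathrm{Nef}(X/Y)$. For $\theta'\in\mathbb X^\bullet(T)$, the $\theta'$-semi-invariants are $R(X,\Lb)_{m\theta'}=\Gamma(X,L_C(\theta')^{m})$. So $V^{\theta'}\supseteq V^\theta$ exactly when the sections of $L_C(\theta')^{m}$ have no base points on $X$, i.e.\ when $L_C(\theta')$ is $\rho$-semiample. Here we use that $V^\theta$ is the preimage of $X$ and that sections of the descended bundle pull back to semi-invariants. By Definition~\ref{defn:MDS}(3), semiample equals nef. By the definition of the GIT fan, the characters whose semistable locus contains $V^\theta$ are exactly those in $\overline C$. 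Thus $L_C(\overline C)=\mathrm{Nef}(X/Y)$, and the interior $C$ maps to the ample cone, consistent with \eqref{eqn:variety_reconstruction_from_line_bundle}.

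Third we handle the Mori fan. Each small $\Q$-factorial modification $g_i\colon X\dashrightarrow X_i$ has the same Cox ring, since $g_i^*$ identifies the Picard groups and sections agree off codimension two. Applying step two to $X_i$ gives chambers $C_i$ with $L_{C_i}(\overline{C_i})=\mathrm{Nef}(X_i/Y)$. The key compatibility is $g_i^*\circ L_{C_i}=L_C$. This holds because both maps are determined by descending $\CO_\zeta$ and agreeing on the common open set in codimension one. Hence, by \eqref{eq:movunion}, the cones $g_i^*\mathrm{Nef}(X_i/Y)$ are images of GIT chamber closures. The support of their union is $\mathrm{Mov}(X/Y)$, which is the locus where base loci have codimension $\ge2$. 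Conversely, a GIT cone inside $\mathrm{Mov}$ is a face of some such chamber closure, and faces correspond to rational contractions by \cite[Theorem~4.4]{Ohta}.

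The main obstacle is this last compatibility, together with showing that every cone of $\Sigma(V)$ lying in $\mathrm{Mov}$ is a face of some $\overline{C_i}$. For this we would cite \cite[Theorem~6.7]{Ohta} directly rather than reprove it.
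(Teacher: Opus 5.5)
Your proposal is correct and is essentially the paper's argument: the paper's proof consists entirely of citing \cite[Theorem~6.7]{Ohta} and its proof for all three assertions, after noting that \cite[Corollary~6.14]{Ohta} guarantees that theorem's hypotheses because $V$ is of finite type; you omit this check and should add it, since your third step relies on the same theorem. Your direct arguments for the first two assertions are sound additions, with two small corrections: for $\theta$ in the ample chamber, $V^{\theta}$ is exactly the $T$-torsor over $X$ determined by $L_1,\dots,L_n$ (not merely up to codimension two), and the equality $\overline{C}=\{\theta' \mid V^{\theta}\subseteq V^{\theta'}\}$ is the standard theorem on GIT cones (cf.\ \cite{ARZHANTSEV-git_via_cox_rings}) rather than a consequence of the definition of $\Sigma(V)$.
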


\begin{proof}
Since $V$ is of finite type, \cite[Corollary~6.14]{Ohta} says that the conditions of \cite[Theorem~6.7]{Ohta} are satisfied. Therefore, $L_C$ is an isomorphism. Moreover, the fact that $L_C$ identifies $\overline{C}$ with $\mathrm{Nef}(X/Y)$ and the movable fan with the subfan of $L_C(\Sigma(V))$ supported on $\on{Mov}(X/Y)$ are both shown in the proof of \cite[Theorem~6.7]{Ohta}.
\end{proof}

\subsection{Symplectic singularities}

\begin{defn}
A variety $Y$ is called a symplectic singularity if it is normal and its smooth locus $U$ admits an everywhere-nondegenerate closed $2$-form $\omega$ such that for an arbitrary resolution $f \colon Z \rightarrow Y$ of singularities $\omega$ extends to a regular $2$-form on $Z$. 
\end{defn}

Denote by $\Sigma$ the singular locus of $Y$. Rather than use directly the definition of terminal singularities given in \cref{defn:do-not-use-terminal}, we will use the following equivalent characterization for varieties with symplectic singularities, which is due to Namikawa \cite[Corollary~1]{NamikawaNote}.

\begin{thm} \label{thm:defn_terminal}
    A symplectic singularity $Y$ has terminal singularities if and only if the codimension of $\Sigma$ in $Y$ is at least $4$.
\end{thm}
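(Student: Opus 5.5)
The plan is to prove the two implications separately. Both rest on two standard structural facts about a symplectic singularity $(Y,\omega)$ of dimension $2n$, due to Kaledin. First, $Y$ has a finite stratification by smooth, locally closed symplectic leaves, each of even dimension. Second, at a point $y$ of a leaf $S$, the formal (or analytic) germ of $Y$ decomposes as $(S,y)\times (Y',y')$, where $Y'$ is a symplectic singularity of dimension $2m=\operatorname{codim}_Y S$ and $y'$ is an isolated singular leaf. Since leaves are even-dimensional, $\operatorname{codim}_Y\Sigma$ is even. So it suffices to show that $Y$ is not terminal when this codimension equals $2$, and that $Y$ is terminal when it is at least $4$.

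Before either implication I would record that $K_Y$ is trivial. The form $\omega^{n}$ is a nowhere-vanishing section of $K_U$ on the smooth locus $U$, and $\operatorname{codim}_Y\Sigma\ge2$ with $Y$ normal, so $K_Y\sim 0$. Hence discrepancies make sense, and for any resolution $f\colon Z\to Y$ they are read off from $f^*\omega$. By the definition of symplectic singularity, $f^*\omega$ extends to a regular $2$-form on $Z$. Therefore $f^*\omega^{n}$ is a regular section of $K_Z$ whose divisor is $\sum a(E_i,Y)E_i$, so every $a(E_i,Y)\ge 0$. Moreover, $a(E,Y)=0$ if and only if $f^*\omega$ is nondegenerate at the generic point of $E$.

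\emph{Codimension $2$ implies not terminal.} Take a generic point $y$ of a codimension-$2$ leaf. By the product decomposition, $Y$ is locally $S\times Y'$ with $Y'$ a $2$-dimensional symplectic singularity, i.e.\ a Du Val (ADE) surface singularity. Its minimal resolution is crepant. Taking the product with $S$ and extending to a resolution of $Y$ (blowing up away from the generic point of this locus), the divisor $E=S\times C$, for $C$ a $(-2)$-curve, has $a(E,Y)=0$. So $Y$ is not terminal.

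\emph{Codimension $\ge4$ implies terminal.} Suppose $f\colon Z\to Y$ is a resolution with an exceptional divisor $E$ satisfying $a(E,Y)=0$; I aim for a contradiction. Choose a generic point $y\in f(E)$ and let $S$ be the leaf containing it, so $\dim f(E)\le\dim S$ near $y$. The codimension assumption gives $2m=\operatorname{codim} S\ge 4$. Using the local product decomposition, I would pass to the slice $Y'$, over which $E$ becomes a divisor whose image lies in the isolated point $y'$. The dimension count in the slice is the main step. A fibre $F$ of $E$ over $y'$ has dimension $2m-1$. By Wierzba and Kaledin, fibres of a resolution of a symplectic singularity are isotropic for the extended form. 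On the other hand, $f^*\omega$ is nondegenerate on $T Z$ at generic points of $E$, so on $TE$ it has rank $2m-2$ and a kernel of dimension $1$. Linear algebra then bounds any isotropic subspace of $TE$ by $1+(2m-2)/2=m$. This gives $2m-1\le m$, i.e.\ $m\le1$, contradicting $m\ge2$. Hence all discrepancies are positive, and $Y$ is terminal.

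The main obstacle is this last step: making the reduction to the transversal slice rigorous. One has to ensure that a crepant divisor over $Y$ restricts to a crepant divisor over the formal slice, and that isotropy of fibres is applied in the right (formal or analytic) category. I would try first to work analytically with Kaledin's decomposition, compatibly with a resolution that is a product near the generic point of $f(E)$.
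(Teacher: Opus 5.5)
The paper gives no proof of this statement (it is quoted from Namikawa), so your sketch has to stand on its own. The ``only if'' half is fine, and could be shortened: terminal singularities are smooth in codimension two (Koll\'ar--Mori, Corollary~5.18), and $\operatorname{codim}\Sigma$ is even.

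The gap is in the key step of the ``if'' half, and it is not only a matter of rigour. You record $\dim f(E)\le\dim S$, but the reduction then tacitly assumes equality. Suppose $f(E)$ is a proper subvariety of $\overline{S}$. A transversal slice through a general point of $S$ then misses $f(E)$ altogether. A slice through a point of $f(E)$ is a special member of the family. There $E$ meets the preimage of the slice in a set of dimension at least $2n-1-\dim f(E)\ge 2m$, which is the dimension of the slice itself. So $E$ induces no divisor, and the preimage is not a resolution of the slice. Moreover, Kaledin's splitting is only formal, so there are no general slices to choose. Finally, the nondegeneracy of the slice form at general points of $E$ is asserted rather than deduced from that of $f^*\omega$ on $TZ$.

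The repair is to run your linear algebra on a fibre at one point rather than on a divisor in a slice.
\begin{itemize}
\item Take $y\in f(E)\cap S$ general and $z$ a smooth point of $E_y:=(E\cap f^{-1}(y))_{\mathrm{red}}$ at which $\tilde\omega:=f^*\omega$ is nondegenerate. Then $\dim E_y=2n-1-\dim f(E)\ge 2m-1$.
\item On the open set where $\tilde\omega$ is nondegenerate, $f$ is Poisson. This holds because $\{f^*a,f^*b\}=f^*\{a,b\}$ on the dense open set where $f$ is an isomorphism.
\item Choose $h_1,\dots,h_{2n-2m}$, regular near $y$, restricting to coordinates on $S$. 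The vectors $X_{f^*h_i}(z)$ have Gram matrix $\pm(\{h_i,h_j\}(y))$, which is invertible because $S$ is a symplectic leaf. So they span a symplectic subspace $H\subset T_zZ$ of dimension $2n-2m$.
\item For $v\in T_zE_y$ we have $\tilde\omega(X_{f^*h_i},v)=\pm\, dh_i(df_z v)=0$, since $f$ is constant on $E_y$. So $T_zE_y$ lies in the $2m$-dimensional symplectic space $H^{\perp}$, where it is isotropic by fibre isotropy.
\item Hence $2m-1\le\dim E_y\le m$, i.e.\ $m\le1$.
\end{itemize}
If the $h_i$ are pulled back from the leaf factor of Kaledin's formal splitting, $H^{\perp}$ is the kernel of the differential of the projection to that factor. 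That is exactly the slice direction you had in mind.

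The same count handles the case $f(E)\not\subset\Sigma$, which your write-up skips. There $m=0$, while $\dim E_y\ge1$ because $\operatorname{codim} f(E)\ge2$.
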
 

\begin{defn}\label{dfn:partial_symplectic_resolution}
    A morphism $f\colon X \rightarrow Y$ of normal varieties is called a partial resolution if $f$ is proper and birational and the restriction of $f$ to $f^{-1}(Y_{\on{reg}})$ is an isomorphism. It is also symplectic if the symplectic form on $f^{-1}(Y_{\on{reg}})$ extends to a symplectic form on $X_{\on{reg}}$.
\end{defn}
If $f \colon Z \to X$ is a symplectic partial resolution then \cite[Lemma 4.3]{Alberto_Malaney_partial_resolutions} says that $Z$ also has symplectic singularities. Moreover, it is shown in \cite[Theorem 4.4]{Alberto_Malaney_partial_resolutions} that a partial resolution of a symplectic singularity is symplectic if and only if it is crepant.

The following technical lemma will be used in the proof of the \cref{prop:Poissomorphismsympsing}.
\begin{lemma}\label{lem:techincainj}
    Let $X$ be an irreducible variety and $\mc{F},\mc{G}$ locally free sheaves on $X$. Suppose $A$ and $B$ are two $\mc{O}$-module morphisms $A \colon \mc{F} \to \mc{G}$ and $B \colon \mc{G} \to \mc{F}$ that are inverse isomorphisms at the generic point of $X$. Then $A,B$ are isomorphisms. 
\end{lemma}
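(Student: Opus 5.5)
The plan is to show that the composites $B \circ A \colon \mc{F} \to \mc{F}$ and $A \circ B \colon \mc{G} \to \mc{G}$ are the identity morphisms. Once this is known, $A$ and $B$ are mutually inverse isomorphisms and the lemma follows. By symmetry it suffices to treat $\phi := B \circ A - \mathrm{id}_{\mc{F}}$. This is an $\mc{O}_X$-linear endomorphism of the locally free sheaf $\mc{F}$. By hypothesis it vanishes at the generic point $\eta$ of $X$, that is, $\phi_\eta = 0$ on the stalk $\mc{F}_\eta$, which is a finite-dimensional $K(X)$-vector space.

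The key step is that a morphism of locally free sheaves on an integral scheme which vanishes at the generic point vanishes identically. The question is local, so we cover $X$ by affine opens $U = \Spec R$ on which $\mc{F}|_U \cong \mc{O}_U^{\oplus r}$. On such a $U$, $\phi$ is given by an $r \times r$ matrix with entries in $R$. Since $X$ is a variety (reduced and irreducible, hence integral), $R$ is a domain and $R \hookrightarrow \mathrm{Frac}(R) = K(X) = \mc{O}_{X,\eta}$ is injective. The matrix of $\phi_\eta$ is the image of the matrix of $\phi|_U$ under this injection. Since $\phi_\eta = 0$, every entry vanishes in $R$, so $\phi|_U = 0$. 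Covering $X$ by such opens gives $\phi = 0$, that is, $B \circ A = \mathrm{id}_{\mc{F}}$. The same argument applied to $A \circ B - \mathrm{id}_{\mc{G}}$ gives $A \circ B = \mathrm{id}_{\mc{G}}$.

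There is no serious obstacle here. The only point needing care is the use of integrality: we need $R \to K(X)$ to be injective, which uses that $X$ is reduced and irreducible. We also need local freeness (torsion-freeness of $\mc{F}$ and $\mc{G}$ would suffice) so that sections are detected at the generic point. Both are supplied by the standing convention that varieties are integral and by the hypotheses of the lemma. Note in particular that the ranks of $\mc{F}$ and $\mc{G}$ agree, since the generic stalks are isomorphic, although this is not needed for the argument.
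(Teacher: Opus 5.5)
Your proof is correct and takes essentially the same route as the paper. Both arguments restrict to affine opens where the sheaves are free, then show that $\mathrm{id} - B \circ A$ vanishes because sections of a free module over a domain inject into the generic stalk. You go slightly further than the paper by also treating $\mathrm{id} - A \circ B$ on $\mc{G}$ explicitly, which the paper leaves implicit.
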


\begin{proof}
    Note that $A,B$ are isomorphisms if they are isomorphisms $\Gamma(Z_i,\mc{F}) \stackrel{\rightarrow}{\leftarrow} \Gamma(Z_i,\mc{G})$ for some affine open cover $\{ Z_i \}$ of $X$. Therefore, we may assume that $X$ is affine and $\mc{F},\mc{G}$ are free. Consider $C := \Id - B \circ A \colon \mc{F} \to \mc{F}$. This is a morphism from the free $\mc{O}_X$-module $\mc{F}$ to itself that is zero at the generic point $\eta$. But we have a commutative diagram 
    \[
    \begin{tikzcd}
    \mc{F}_{\eta} \ar[r,"C = 0"] & \mc{F}_{\eta} \\
    F \ar[u,hook] \ar[r,"C"] & F \ar[u,hook] ,
    \end{tikzcd}
    \]
    where $F = \Gamma(X,\mc{F})$. This forces $C = 0$. 
\end{proof}

\begin{proposition}\label{prop:Poissomorphismsympsing}
    Let $Y$ be a variety with symplectic singularities, $X$ a normal Poisson variety and $\pi \colon X \to Y$ a proper birational Poisson morphism. Then $X$ has symplectic singularities and $\pi$ is a crepant partial resolution. 
\end{proposition}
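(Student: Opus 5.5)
The strategy is to show that the symplectic form $\omega$ on $Y_{\mathrm{reg}}$ pulls back to a symplectic form on $X_{\mathrm{reg}}$. Then $\pi$ is a symplectic partial resolution, hence crepant by \cite[Theorem 4.4]{Alberto_Malaney_partial_resolutions}, and $X$ has symplectic singularities by \cite[Lemma 4.3]{Alberto_Malaney_partial_resolutions}. The comparison will be made on Poisson bivectors. Lemma~\ref{lem:techincainj} then upgrades an isomorphism at the generic point to an isomorphism on all of $X_{\mathrm{reg}}$.

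First, $\pi$ is an isomorphism over $Y_{\mathrm{reg}}$. Since $\pi$ is proper and birational, there is a dense open $U \subset Y_{\mathrm{reg}}$ over which $\pi$ is an isomorphism. On $U$ the Poisson bivector of $X$ is identified with that of $Y$, because $\pi$ is Poisson. I will verify this step, and the statement that $\pi$ is an isomorphism over all of $Y_{\mathrm{reg}}$, by the nondegeneracy argument below, which is local on $X$. It is cleanest to work directly on $X_{\mathrm{reg}}$.

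On $X_{\mathrm{reg}}$ the Poisson bivector $\sigma_X$ gives a morphism of locally free sheaves $A := \sigma_X^\sharp \colon \Omega^1_{X_{\mathrm{reg}}} \to T_{X_{\mathrm{reg}}}$. Let $\tilde\omega := \pi^*\omega$, which a priori is a $2$-form on $\pi^{-1}(Y_{\mathrm{reg}}) \cap X_{\mathrm{reg}}$. By the definition of symplectic singularities applied to a resolution $Z \to X \to Y$, where $Z$ resolves $X$, the form $\omega$ extends regularly to $Z$. Pushing forward along the birational map $Z \to X$ shows that $\tilde\omega$ extends to a regular $2$-form on $X_{\mathrm{reg}}$: the map $Z \to X$ is an isomorphism over a big open subset of $X_{\mathrm{reg}}$, and forms on a smooth variety extend over codimension two. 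This gives $B := \tilde\omega^\flat \colon T_{X_{\mathrm{reg}}} \to \Omega^1_{X_{\mathrm{reg}}}$.

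At the generic point, $\pi$ is an isomorphism of Poisson varieties and $\sigma_Y = \omega^{-1}$. Hence $A$ and $B$ are inverse there, up to the sign convention. Lemma~\ref{lem:techincainj} now says that $A$ and $B$ are inverse isomorphisms on $X_{\mathrm{reg}}$. In particular $\tilde\omega$ is nondegenerate on $X_{\mathrm{reg}}$. It is also closed, since closedness holds generically.

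Second, I check that $\tilde\omega$ is the symplectic form needed for Definition~\ref{dfn:partial_symplectic_resolution}. We need $\pi$ to be an isomorphism over $Y_{\mathrm{reg}}$. Over $Y_{\mathrm{reg}}$, the map $\pi$ is proper and birational onto a smooth target. The pulled-back form $\pi^*\omega$ is nondegenerate on $X_{\mathrm{reg}} \cap \pi^{-1}(Y_{\mathrm{reg}})$, so $d\pi$ is an isomorphism there and $\pi$ is étale at those points.

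The main obstacle is ruling out exceptional loci, including singular points of $X$, over $Y_{\mathrm{reg}}$. I would handle this as follows. On the smooth variety $\pi^{-1}(Y_{\mathrm{reg}}) \cap X_{\mathrm{reg}}$, the Jacobian determinant $\det d\pi$ is nonvanishing. By the standard fact that the exceptional locus of a birational morphism onto a smooth variety is divisorial and equals the zero locus of the Jacobian, no exceptional divisor meets $X_{\mathrm{reg}}$. A normal $X$ has singular locus of codimension at least two, so any exceptional divisor would contain points of $X_{\mathrm{reg}}$. Hence $\pi$ is quasi-finite, and so finite, over $Y_{\mathrm{reg}}$. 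Being finite and birational onto a normal variety, it is an isomorphism by Zariski's main theorem.

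Therefore $\pi$ is a symplectic partial resolution, and the cited results of \cite{Alberto_Malaney_partial_resolutions} give both conclusions.
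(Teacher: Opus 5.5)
Your proof is correct and follows essentially the paper's route. Both show that $\pi^*\omega$ is regular on $X_{\mathrm{reg}}$ using a resolution of $X$ composed with $\pi$, get nondegeneracy by comparing with the Poisson bivector through Lemma~\ref{lem:techincainj}, and use van der Waerden purity to show $\pi$ is an isomorphism over $Y_{\mathrm{reg}}$. The only variation is that you rule out exceptional divisors over $Y_{\mathrm{reg}}$ using \'etaleness of $\pi$ on $X_{\mathrm{reg}}$ together with normality of $X$, and then quote \cite{Alberto_Malaney_partial_resolutions} for crepancy. The paper instead first derives crepancy from triviality of the canonical sheaves and then uses that smooth varieties are terminal.
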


\begin{proof}
Let $\sigma_Y$ be the symplectic form on the regular locus of $Y$. First we argue that $X$ has symplectic singularities. We begin by noting that $\sigma_X := \pi^* \sigma_Y$ is a regular (closed) $2$-form on the smooth locus $U$ of $X$. Indeed, we can fix a resolution of singularities $f \colon Z \to X$ such that $f$ is an isomorphism over $U$. Then $f^* \sigma_X = (\pi \circ f)^* \sigma_Y$ is a regular form on $Z$ because $Y$ is assumed to have symplectic singularities. But $f$ is an isomorphism over $U$ which implies that $\sigma_X = \pi^* \sigma_Y$ must be regular on $U$. Next, we claim that $\sigma_X$ is non-degenerate on $U$. It defines a morphism of $\mc{O}_{U}$-modules $\phi \colon \Theta_{U} \to \Omega^1_{U}$ given by $\phi(v)(w) = \sigma_X(v,w)$. The Poisson bracket on $\mc{O}_X$ similarly defines a morphism $\psi \colon \Omega^1_{U} \to \Theta_{U}$. Note that the Poisson bracket on $Y$ is induced from the symplectic form $\sigma_Y$. Since $\pi$ is Poisson and generically an isomorphism, this means that $\phi,\psi$ are inverse isomorphisms at the generic point of $U$. Since $\Omega^1_{U}, \Theta_{U}$ are locally free and $U$ smooth, this implies that they are everywhere isomorphisms by \cref{lem:techincainj}. Hence $\sigma_X$ is non-degenerate. As noted previously, $f^* \sigma_X = (\pi \circ f)^* \sigma_Y$ is a regular form on $Z$. Thus, $X$ has symplectic singularities. 

A partial resolution is a proper birational morphism. Therefore, we only need to show that it is crepant. That is, if ${\omega}_Y$ is the canonical sheaf (a line bundle in this case) on $Y$ then we wish to show that $\pi^* {\omega}_Y \cong {\omega}_X$. But, in fact, since $Y$ and $X$ have symplectic singularities, we have ${\omega}_Y \cong \mc{O}_Y$ and ${\omega}_X \cong \mc{O}_X$. Therefore, $\pi^* {\omega}_Y = \pi^* \mc{O}_Y = \mc{O}_X = {\omega}_X$. 

By our definition of crepant partial resolution, we also need to show that $\pi$ is an isomorphism over $V:=Y_{\mathrm{reg}}$. The restricted morphism $\pi_V\colon \pi^{-1}(V)\rightarrow V$ is projective and birational. Since $V$ is smooth, it is
terminal; hence crepancy implies that $\pi_V$ has no exceptional prime divisors. Moreover, $V$ is locally factorial, and a projective birational morphism to a normal
$\mathbb Q$-factorial variety with no exceptional prime
divisors is an isomorphism by Van der Waerden's purity theorem. Therefore, $\pi_V$ is an
isomorphism, and $\pi$ is a crepant partial resolution.

\end{proof}



\begin{thm}\label{thm:XYMDS}
Assume that an affine variety $Y$ has symplectic singularities. If $X \to Y$ is a $\Q$-factorial terminalization then it is a relative Mori dream space.
\end{thm}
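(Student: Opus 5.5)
The plan is to use Ohta's criterion recalled above. An algebraic fiber space $\rho \colon X \to Y$ over an affine base, with $X$ $\Q$-factorial and $\Pic(X/Y)_{\Q} = N^1(X/Y)_{\Q}$, is a relative Mori dream space if and only if its Cox ring over $Y$ is finitely generated. So we will check, in order:
\begin{enumerate}
\item $\rho$ is an algebraic fiber space;
\item $X$ is $\Q$-factorial;
\item $\Pic(X/Y)_{\Q} = N^1(X/Y)_{\Q}$;
\item the Cox ring $R(X,\Lb)$ is finitely generated.
\end{enumerate}

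Steps (1) and (2) are immediate. Condition (2) is part of the definition of a $\Q$-factorial terminalization. For (1), $\rho$ is projective and birational with $Y$ normal, so Zariski's main theorem gives $\rho_* \mc{O}_X = \mc{O}_Y$.

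For (3) we use the symplectic structure. By \cite[Theorem 4.4]{Alberto_Malaney_partial_resolutions}, $X$ has symplectic singularities; in particular they are rational, and $K_X = \rho^* K_Y$ is trivial over $Y$. Since $Y$ has rational singularities, $R^i\rho_*\mc{O}_X = 0$ for $i > 0$. Suppose $L$ is numerically trivial over $Y$. Then $L$ and its inverse are $\rho$-nef. Since $-K_X \equiv_Y 0$, the line bundle $L - K_X$ is $\rho$-nef and big (as $\rho$ is birational), so relative Kawamata--Viehweg vanishing applies. Combined with the relative base point free theorem, this shows that some multiple $L^{\otimes m}$ is $\rho$-semiample. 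The map it defines over $Y$ contracts no curves, so it is finite and birational onto a normal variety, hence equal to $Y$. Therefore $L^{\otimes m} \cong \rho^* M$ for some line bundle $M$ on $Y$. This proves that numerical and linear equivalence agree rationally in the relative setting. (Alternatively, one can use Kaledin's and Namikawa's results that $H^1(X,\mc{O}_X)=0$ and that $\Pic$ is discrete and identified with $H^2(X,\Q)$ via $c_1$, and check numerical triviality on curves in fibres.)

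The main obstacle is step (4), finite generation of the Cox ring. Our first approach is to invoke BCHM \cite{BCHM}. Since $K_X \equiv_Y 0$, $X$ is $\Q$-factorial and klt (terminal), and $Y$ is affine, $X$ is a log Fano pair over $Y$: take a small effective $\rho$-ample-ish boundary $\Delta$ with $-(K_X+\Delta)$ $\rho$-ample. Since $\rho$ is birational, every divisor is $\rho$-big, and a small perturbation can be chosen so that $(X,\Delta)$ stays klt. Then \cite[Corollary 1.3.2]{BCHM} gives finite generation of adjoint rings, and its standard consequence that log Fano varieties over a base are relative Mori dream spaces gives finite generation of the full Cox ring. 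The care needed here is in choosing $\Delta$ effective with $-(K_X+\Delta)$ relatively ample; this is possible because $K_X\sim_Y 0$ and any $\rho$-ample $A$ can be written as $\Q$-linearly equivalent over $Y$ to an effective divisor, since $Y$ is affine. Ohta's criterion then yields conditions (3) and (4) of \cref{defn:MDS}, completing the proof.
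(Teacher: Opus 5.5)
Your route is the paper's. The paper's proof just cites \cite[Lemma~1]{Namikawa3}, which rests on \cite[Corollary~1.3.2]{BCHM}: a $\Q$-factorial log Fano variety over an affine base is a Mori dream space. Your steps (1)--(3) and the appeal to Ohta's criterion unpack that citation. However, the one step you actually have to construct, the boundary $\Delta$, has the wrong sign as written.

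Since $K_X=\rho^*K_Y\sim 0$, asking for $-(K_X+\Delta)$ to be $\rho$-ample means $-\Delta$ must be $\rho$-ample. So $\Delta$ must be an effective $\rho$-\emph{anti}-ample divisor. You propose taking $\Delta$ to be a small multiple of an effective divisor $\Q$-linearly equivalent to a $\rho$-ample $A$. That gives $-(K_X+\Delta)\equiv_Y -\epsilon A$, which is $\rho$-anti-ample. Your justification ``since $Y$ is affine'' also does not produce what is needed. Over an affine base an anti-ample line bundle need not have any sections: take $\mathcal{O}(-1)$ on $\mathbb{P}^1\times Y\to Y$.

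The fix uses birationality, not just affineness. For any line bundle $M$ on $X$ we have $\Gamma(X,M)=\Gamma(Y,\rho_*M)$. Moreover $\rho_*M$ is a nonzero coherent sheaf, since it is invertible over the open set where $\rho$ is an isomorphism. Hence $\Gamma(X,M)\neq 0$ because $Y$ is affine.

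Apply this to $M=\mathcal{O}_X(-mA)$ to get an effective Cartier divisor $E\sim -mA$. Equivalently, realise $\rho$ as the blowup of an ideal sheaf and take the exceptional Cartier divisor. Since $X$ is terminal, $(X,\epsilon E)$ is klt for $0<\epsilon\ll 1$. Also $-(K_X+\epsilon E)\equiv_Y \epsilon m A$ is $\rho$-ample, so \cite[Corollary~1.3.2]{BCHM} applies. With this correction your argument is complete.

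One smaller wording issue in step (3): the morphism that ``contracts no curves'' should be the finite part $Z\to Y$ of the Stein factorisation. It is not $X\to Z$, which contracts every curve contracted by $\rho$.
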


\begin{proof}
As noted in \cite[Lemma~1]{Namikawa3}, this follows from \cite[Corollary~1.3.2]{BCHM}. See also \cite[Example~7.1]{Ohta}. 
\end{proof}

Not only do $\Q$-factorial terminalizations $X \to Y$ of symplectic singularities give rise to relative Mori dream spaces, these are also the maximal crepant projective partial resolutions of $Y$; specifically, if $Z \to X$ is a crepant projective partial resolution with $Z$ normal then $Z \to X$ is an isomorphism; see for instance \cite[Lemma 4.5]{Alberto_Malaney_partial_resolutions}.

  \section{Quotients of crepant resolutions} \label{sec:5.Quotients}

In this section, we consider when the quotient of a crepant partial resolution $X \to Y$ by a finite group $\Gamma$ gives rise to a $\Q$-factorial terminalization $X / \Gamma \to Y / \Gamma$. 

\subsection{Group quotients}

Let $Y$ be a normal affine variety. Assume that the algebraic fiber space $\rho \colon X \to Y$ is a relative Mori dream space. Let $\Gamma$ be a finite group acting on $X$ and $Y$ such that $\rho \colon X \to Y$ is $\Gamma$-equivariant. Let $\pi \colon X \to X/\Gamma$ be the quotient morphism.  

In addition to (1)--(4) of \cref{defn:MDS}, we assume that :
\begin{enumerate}
\item[(5)] the pullback by $\pi$ defines an isomorphism $\pi^{*} \colon \operatorname{Pic}((X/\Gamma) / (Y/\Gamma))_{\mathbb{Q}} \;\xrightarrow{\;\sim\;}\; \operatorname{Pic}(X/Y)_{\mathbb{Q}}$.
\end{enumerate}
This condition is a rephrasing of the \cref{lem:pullbackgroupiso}(iii) applied to the relative setting. In particular it follows for the case of $\Gamma$ being a subgroup of a connected algebraic group action from applying \cref{lem:pullbackgroupiso}(iii) twice; to $X$ and $Y$ respectively. 

Since $N^1(X/Y)$ is finite-dimensional and $\operatorname{Pic}(X/Y)_{\mathbb{Q}} \cong N^1(X/Y)$, assumption (5) implies that $\operatorname{Pic}((X/\Gamma) / (Y/\Gamma))_{\mathbb{Q}}$ is finite-dimensional. We fix line bundles $L_{1}, \dots, L_{n}$ on $X/\Gamma$ such that their classes form a basis of the vector space $\operatorname{Pic}((X/\Gamma) / (Y/\Gamma))_{\mathbb{Q}}$.

\begin{lemma}\label{lem:intlinebundle}
    Let $L$ be a line bundle on $X/\Gamma$. Then $\Gamma(X/\Gamma, L) = \Gamma(X,\pi^* L)^\Gamma$.
\end{lemma}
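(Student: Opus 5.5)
The plan is to reduce to the defining property of the quotient, namely $\mc{O}_{X/\Gamma} = (\pi_* \mc{O}_X)^{\Gamma}$, together with the projection formula. Since $\pi$ is finite and $\pi^*L = \pi^{-1}L \otimes_{\pi^{-1}\mc{O}_{X/\Gamma}} \mc{O}_X$, the group $\Gamma$ acts on $\pi^*L$ through its action on the second factor. This gives a canonical $\Gamma$-equivariant structure on $\pi^*L$, and this is the action used when taking invariants of $\Gamma(X,\pi^*L)$. By definition,
\[
\Gamma(X,\pi^*L) = \Gamma(X/\Gamma, \pi_*\pi^*L),
\]
and the projection formula for the affine morphism $\pi$ with $L$ locally free gives $\pi_*\pi^*L \cong L \otimes_{\mc{O}_{X/\Gamma}} \pi_*\mc{O}_X$. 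This isomorphism is $\Gamma$-equivariant, with $\Gamma$ acting trivially on $L$.

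Next, I would take $\Gamma$-invariants sheafwise. Work on an open cover $\{V_i\}$ of $X/\Gamma$ by affine opens over which $L$ is trivial; their preimages $U_i = \pi^{-1}(V_i)$ are $\Gamma$-stable affines with $\mc{O}(U_i)^\Gamma = \mc{O}(V_i)$. Over $V_i$ the sheaf $L \otimes \pi_*\mc{O}_X$ is isomorphic to $\pi_*\mc{O}_X$, equivariantly with respect to a trivialising section $s_i$. Its invariant sections are therefore $\mc{O}(V_i) s_i = L(V_i)$. Because $\Gamma$ is finite and we are in characteristic zero, the invariants subsheaf $(\pi_*\pi^*L)^\Gamma$ is the image of the Reynolds projector $e = \frac{1}{|\Gamma|}\sum_{g} g$. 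It follows that $(\pi_*\pi^*L)^\Gamma \cong L$, the local identifications being compatible on overlaps since they are all induced by the unit $L \to \pi_*\pi^*L$.

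Finally, global sections commute with taking $\Gamma$-invariants: a section is invariant if and only if all its local restrictions are. Hence
\[
\Gamma(X,\pi^*L)^\Gamma = \Gamma(X/\Gamma, (\pi_*\pi^*L)^\Gamma) = \Gamma(X/\Gamma, L),
\]
and the identification is the pullback map $s \mapsto \pi^*s$.

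The only potential subtlety is being careful about which $\Gamma$-linearisation is placed on $\pi^*L$, and checking that the unit map is $\Gamma$-equivariant. The rest of the argument is formal.
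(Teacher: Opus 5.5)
Your proof is correct and takes essentially the same approach as the paper: both compute locally over affine opens $V \subset X/\Gamma$, where $\Gamma(\pi^{-1}(V),\pi^*L)^\Gamma = \bigl(\mathcal{O}(\pi^{-1}(V))\otimes_{\mathcal{O}(V)} L(V)\bigr)^\Gamma = L(V)$, and then glue. The only difference is packaging: you glue through the projection formula and the invariant subsheaf $(\pi_*\pi^*L)^\Gamma \cong L$, while the paper takes $\Gamma$-invariants of the \v{C}ech complex of an affine cover of $X/\Gamma$.
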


\begin{proof}
If $U \subset X/\Gamma$ is an affine open subset then note that $\pi^{-1}(U)$ is also affine because $\pi$ is an affine morphism. Then $\Gamma(\pi^{-1}(U),\pi^* L) = \mathcal{O}(\pi^{-1}(U)) \otimes_{\mathcal{O}(U)} L(U)$ and hence the space of invariant global sections $\Gamma(\pi^{-1}(U),\pi^* L)^\Gamma$ equals $L(U)$. Then $\Gamma(X,\pi^* L)$ can be computed via the \v{C}ech complex associated to an affine open cover of $X/\Gamma$. Taking invariants gives the \v{C}ech complex for $\Gamma(X/\Gamma,L)$. 
\end{proof}

\begin{theorem}\label{thm:XWMDS}
$X/\Gamma$ is a relative Mori dream space over $Y/\Gamma$.
\end{theorem}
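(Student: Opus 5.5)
We will use Ohta's criterion recalled after \cref{defn:MDS}. An algebraic fiber space over an affine base, with $\Q$-factorial source and $\Pic_{\Q} = N^1_{\Q}$, is a relative Mori dream space if and only if its Cox ring is finitely generated. So the plan is to check, for $\bar\rho \colon X/\Gamma \to Y/\Gamma$, first the three hypotheses of the criterion and then finite generation of the Cox ring.

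\emph{Preliminary properties.}
\begin{itemize}
\item \emph{Affine base and fiber space.} $Y/\Gamma$ is affine, since $Y$ is normal affine. The identity $\bar\rho_* \mc{O}_{X/\Gamma} = \mc{O}_{Y/\Gamma}$ follows by taking $\Gamma$-invariants of $\rho_*\mc{O}_X = \mc{O}_Y$ over $\Gamma$-stable affine opens.
\item \emph{Projectivity.} $\bar\rho$ is projective. If $L$ is $\rho$-ample then $\bigotimes_{\gamma} \gamma^*L$ is $\Gamma$-linearizable after taking a power, and it descends to a $\bar\rho$-ample bundle. Here we use that $\pi$ is finite, together with \cref{lem:pullbackgroupiso}(i).
\item \emph{$\Q$-factoriality.} This is \cref{lem:Qfactorialquotient}.
\item \emph{$\Pic_{\Q} = N^1_{\Q}$.} Let $M$ be a fractional line bundle on $X/\Gamma$ that is numerically trivial over $Y/\Gamma$. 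For any curve $\ell \subset X$ contracted by $\rho$, the projection formula gives $\deg(\pi^*M|_\ell) = \deg(\pi|_\ell)\deg(M|_{\pi(\ell)}) = 0$. Every contracted curve in $X/\Gamma$ is the image of a contracted curve in $X$. Hence $\pi^*M$ is numerically trivial, so by (2) for $X$ it is zero in $\Pic(X/Y)_{\Q}$. By (5), $M = 0$.
\end{itemize}

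\emph{Finite generation of the Cox ring.} Fix the basis $L_1,\dots,L_n$ of $\Pic((X/\Gamma)/(Y/\Gamma))_{\Q}$. By (5), the classes $\pi^*L_1,\dots,\pi^*L_n$ form a basis of $\Pic(X/Y)_{\Q}$. Set $\Lb = \{L_i\}$ and $\pi^*\Lb = \{\pi^*L_i\}$. Because the $\pi^*L_i$ are pulled back from $X/\Gamma$, they carry canonical $\Gamma$-linearizations compatible with tensor products. Therefore $\Gamma$ acts on $R(X,\pi^*\Lb)$ by graded algebra automorphisms, and \cref{lem:intlinebundle} gives, degree by degree,
\[
R(X/\Gamma,\Lb) = R(X,\pi^*\Lb)^{\Gamma}.
\]
The Cox ring of $X$ over $Y$ is finitely generated, but it is computed with respect to some basis, which need not be $\pi^*\Lb$. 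So we argue in three steps:
\begin{enumerate}
\item Finite generation of section rings does not depend on the chosen basis of $\Pic(X/Y)_{\Q}$. Passing between two bases amounts to taking a Veronese-type subring along a finite-index sublattice, and possibly adjoining finitely many torsion-related pieces. Ohta's equivalence holds for any basis, so $R(X,\pi^*\Lb)$ is finitely generated.
\item By Noether's theorem for finite groups acting on finitely generated $\C$-algebras, $R(X,\pi^*\Lb)^{\Gamma}$ is finitely generated.
\item Hence $R(X/\Gamma,\Lb)$ is finitely generated, and Ohta's criterion shows that $X/\Gamma \to Y/\Gamma$ is a relative Mori dream space.
\end{enumerate}

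\emph{Main obstacle.} I expect the main obstacle to be step (1): the independence of finite generation from the choice of basis, that is, applying Ohta's criterion to the specific family $\pi^*\Lb$. I would first try to show that if $\Lb'$ is a basis with $\Z\Lb' \supset \Z\pi^*\Lb$ of finite index, then $R(X,\pi^*\Lb)$ is the invariant ring of $R(X,\Lb')$ under a finite diagonalizable group. This would give finite generation by the same Noether argument. Alternatively, one can deduce finite generation directly from the Mori dream space structure of $X$, as in \cite{HuKeel}.
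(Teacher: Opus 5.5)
Your proposal is correct and follows the paper's proof. The paper likewise gets $\Q$-factoriality from \cref{lem:Qfactorialquotient}, uses $R(X/\Gamma,\mathcal{L})=R(X,\pi^*\mathcal{L})^{\Gamma}$ via \cref{lem:intlinebundle} plus finite generation of invariants, proves $\operatorname{Pic}_{\Q}=N^1_{\Q}$ by the same degree computation on contracted curves together with assumption (5), and then applies Ohta's criterion. The step you flag as the main obstacle is settled in the paper by citing \cite[Proposition~6.10]{Ohta}, the relative analogue of Hu--Keel's result that every section ring on a Mori dream space is finitely generated. So your second fallback is the right one and the lattice/Veronese argument is unnecessary; your extra checks of projectivity and $\bar\rho_*\mathcal{O}_{X/\Gamma}=\mathcal{O}_{Y/\Gamma}$ are correct details the paper leaves implicit.
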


\begin{proof}
First, we note that $X/\Gamma$ is $\mathbb{Q}$-factorial by \cref{lem:Qfactorialquotient} because $X$ is $\mathbb{Q}$-factorial.  

By \cref{lem:intlinebundle}, the Cox ring
\begin{equation}\label{eq:Coxinvariant}
      R(X/\Gamma, \mathcal{L}) 
  = R\!\left(X, \pi^{*}\mathcal{L}\right)^{\Gamma}
\end{equation}
is finitely generated because $R(X, \pi^{*}\mathcal{L})$ is finitely generated by \cite[Proposition~6.10]{Ohta}.  

Finally, we must show that $\operatorname{Pic}((X/\Gamma) / (Y/\Gamma))_{\mathbb{Q}} = N^{1}((X/\Gamma) / (Y/\Gamma))_{\mathbb{Q}}$. In other words, if $L$ is a numerically trivial line bundle on $X/\Gamma$ then $[L] = 0$ in $\operatorname{Pic}((X/\Gamma) / (Y/\Gamma))_{\mathbb{Q}}$. Since we assume (5) that $\pi^* \colon \operatorname{Pic}((X/\Gamma) / (Y/\Gamma))_{\mathbb{Q}} \;\xrightarrow{\;\sim\;}\; \operatorname{Pic}(X/Y)_{\mathbb{Q}}$ and we know from the equality $\operatorname{Pic}(X / Y)_{\mathbb{Q}} = N^{1}(X / Y)_{\mathbb{Q}}$ that if $M$ is a numerically trivial line bundle on $X$ that $[M]=0$ in $\operatorname{Pic}(X / Y)_{\mathbb{Q}}$, it suffices to show that if $L$ is a numerically trivial line bundle on $X/\Gamma$ then $\pi^* L$ is numerically trivial on $X$. Let $\ell \subset X$ be a (proper) curve contracted by $\rho$. Let $\widetilde{\ell}$ be the image of $\ell$ in $X/\Gamma$. Since $\pi$ is a finite morphism, $\widetilde{\ell}$ is a curve. We have a commutative diagram 
\begin{equation}\label{eq:curvemap}
    \begin{tikzcd}
    \ell \ar[r,hook] \ar[d,"\pi_{\ell}"'] & X \ar[d,"\pi"] \\
    \widetilde{\ell} \ar[r,hook] & X/\Gamma
\end{tikzcd}
\end{equation}
where $\pi_{\ell}$ is finite because $\pi$ is a finite morphism. Then the line bundle $(\pi^* L) |_{\ell} = \pi_{\ell}^* (L |_{\widetilde{\ell}})$ has degree $e \deg (L |_{\widetilde{\ell}})$ by \cite[Lemma~0AYZ]{stacks}, where $e$ is the degree of the morphism $\pi_{\ell}$. Therefore, if $\deg (L |_{\widetilde{\ell}}) = 0$ then $\deg (\pi^* L) |_{\ell} = 0$ too and thus $\pi^* L$ is numerically trivial. 

It follows that $X/\Gamma \to Y/\Gamma$ is a relative Mori dream space by \cite[Theorem~1.1]{Ohta}.
\end{proof}

Let $T$ be the torus associated to the relative Mori dream space $X \to Y$ as in \cref{sec:birationaldefn}. Then $T$ acts on the Cox ring of $X/Y$. \cref{thm:LCNefMori} says that the linearization map $L_C \colon \mathbb{X}^{\bullet}(T)_{\Q} \to \operatorname{Pic}(X/Y)_{\mathbb{Q}}$ is an isomorphism. 

\begin{lemma}\label{lem:LCLCWcomm}
There is a commutative diagram
    \[
\begin{tikzcd}
    & \mathbb{X}^{\bullet}(T)_{\mathbb{Q}} \ar[dr,"L_C"] \ar[dl,"L_C^\Gamma"'] & \\
  \ar[rr,"{\pi^{*}}"',"\sim"] \operatorname{Pic}((X/\Gamma) / (Y/\Gamma))_{\mathbb{Q}} & & \operatorname{Pic}(X/Y)_{\mathbb{Q}} .
\end{tikzcd}
\] 
\end{lemma}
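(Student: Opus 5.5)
The plan is to compare the two linearisation maps through a single GIT picture in which the Cox ring of $X/\Gamma$ is the ring of $\Gamma$-invariants of the Cox ring of $X$. Set $\Lb = \{L_1,\dots,L_n\}$, the basis of $\operatorname{Pic}((X/\Gamma)/(Y/\Gamma))_{\Q}$ fixed above. By assumption (5), $\pi^*\Lb$ is a basis of $\operatorname{Pic}(X/Y)_{\Q}$, so $V := \Spec R(X,\pi^*\Lb)$ is a Cox ring of $X$ over $Y$ with torus $T$. By \eqref{eq:Coxinvariant}, $V/\Gamma = \Spec R(X/\Gamma,\Lb)$ is a Cox ring of $X/\Gamma$ over $Y/\Gamma$. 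The $\Z^n$-grading is preserved by $\Gamma$, so the same torus $T$ acts on $V/\Gamma$ and $T \times \Gamma$ acts on $V$. This justifies using the same character space $\mathbb{X}^{\bullet}(T)_{\Q}$ on both sides of the diagram.

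Next, fix $\theta \in C$. Then $\theta$ corresponds to a $\rho$-ample class, and $X = V\git_\theta T$ by \eqref{eqn:variety_reconstruction_from_line_bundle}. By \cref{prop:GITfansthesame}, the GIT fans of $V$ and $V/\Gamma$ coincide, so $C$ is also a chamber for $V/\Gamma$. By \cref{lem:GITtheWTequal}, $(V/\Gamma)^\theta = V^\theta/\Gamma$. I also need $(V/\Gamma)\git_\theta T \cong X/\Gamma$ compatibly. This follows because the ring of $\theta$-semi-invariants of $V/\Gamma$ is the $\Gamma$-invariant part of that of $V$, so \cref{lem:grouprojquotient} identifies its $\Proj$ with $X/\Gamma$. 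Equivalently, I would check that $\pi^*$ of the ample class for $X/\Gamma$ given by $\theta$ is $\rho$-ample, since pullback under a finite morphism preserves ampleness. The outcome is a commutative square
\[
\begin{tikzcd}
V^\theta \ar[r,"q_\theta"] \ar[d,"p"'] & X \ar[d,"\pi"] \\
V^\theta/\Gamma \ar[r,"q^\Gamma_\theta"] & X/\Gamma
\end{tikzcd}
\]
with $p$ the quotient map. This square commutes because all maps are induced by the inclusions of graded rings $R(X/\Gamma,\Lb)_{(\theta)} \subset R(X,\pi^*\Lb)_{(\theta)}$.

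Now let $\zeta \in \mathbb{X}^{\bullet}(T)_{\Q}$, and choose $m$ sufficiently divisible for both descent statements. Set $L = L^\Gamma_C(\zeta)$, so that $(q^\Gamma_\theta)^*L^{\otimes m} \cong \CO_{m\zeta}$ $T$-equivariantly on $V^\theta/\Gamma$. Pulling back along the $T$-equivariant map $p$, and using $p^*\CO_{m\zeta} = \CO_{m\zeta}$, gives
\[
q_\theta^*\pi^*L^{\otimes m} \cong p^*(q_\theta^\Gamma)^*L^{\otimes m} \cong \CO_{m\zeta}.
\]
Injectivity of $q_\theta^*$ \cite[Proposition~5.1]{KKV} then yields $\pi^*L = L_C(\zeta)$ in $\operatorname{Pic}(X/Y)_{\Q}$. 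It remains to check that the quotients by $\Pic(Y)$ and $\Pic(Y/\Gamma)$ are compatible; this holds since $\rho\circ$ and $\pi$ commute.

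I expect the main obstacle to be the second step: showing that the GIT quotient of $V/\Gamma$ at $\theta$ really is $X/\Gamma$, with $q^\Gamma_\theta$ the map used to define $L_C^\Gamma$ and the square above commuting. I would handle this by working with the explicit $\Proj$ of semi-invariant rings and invoking \cref{lem:grouprojquotient}.
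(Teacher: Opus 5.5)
Your proposal is correct and follows essentially the same argument as the paper: the commutative square $V^{\theta} \to X$ over $V^{\theta}/\Gamma \to X/\Gamma$, pulling back $\CO_{m\zeta}$ along the quotient map $V^{\theta} \to V^{\theta}/\Gamma$, and concluding by injectivity of $q_\theta^*$. The only difference is that you also justify the identifications $(V/\Gamma)^{\theta} = V^{\theta}/\Gamma$ and $(V/\Gamma)\git_\theta T \cong X/\Gamma$ via \cref{lem:GITtheWTequal} and \cref{lem:grouprojquotient}, which the paper uses without comment here.
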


\begin{proof}
Let $\zeta \in \mathbb{X}^{\bullet}(T)_{\mathbb{Q}}$ and choose $m \in \Z_{> 0}$ such that $m \zeta \in  \mathbb{X}^{\bullet}(T)$ and $m\zeta |_{T_p} = 1$ for all $p \in V^{\theta}$, as was done in the construction of the map \eqref{eq:Lthetamap}. There is a commutative diagram 
\[
\begin{tikzcd}
V^{\theta} \ar[r,"q_{\theta}"] \ar[d,"\tilde{\pi}"'] & V^{\theta} \git T = X \ar[d,"\pi"] \\
V^{\theta} / \Gamma \ar[r,"Q_{\theta}"] & X / \Gamma. 
\end{tikzcd}
\]
Then $L_C^{\Gamma}(m \zeta) = (Q_{\theta}^*)^{-1}(\mathcal{O}_{V^{\theta} / \Gamma} \otimes m \zeta)$ and $L_C(m \zeta) = (q_{\theta}^*)^{-1}(\mathcal{O}_{V^{\theta}} \otimes m \zeta)$; equivalently, 
\[
Q_{\theta}^* \, L_C^{\Gamma}(m \zeta) = \mathcal{O}_{V^{\theta} / \Gamma} \otimes m \zeta \quad \textrm{ and } \quad q_{\theta}^* \, L_C(m \zeta) = \mathcal{O}_{V^{\theta}} \otimes m \zeta.
\]
Note that $\mathcal{O}_{V^{\theta}}  = \tilde{\pi}^* \mathcal{O}_{V^{\theta} / \Gamma} $ as $T$-equivariant bundles and hence $\tilde{\pi}^*(\mathcal{O}_{V^{\theta} / \Gamma} \otimes m \zeta) \cong \mathcal{O}_{V^{\theta}} \otimes m \zeta$. Therefore, 
\begin{align*}
q_{\theta}^* L_C(m \zeta) & = \tilde{\pi}^*(\mathcal{O}_{V^{\theta} / \Gamma} \otimes m \zeta) \\
& = \tilde{\pi}^* (Q_{\theta}^* L_C^{\Gamma}(m \zeta) ) \\
& = q_{\theta}^*(\pi^* L_C^{\Gamma}(m \zeta)). 
\end{align*}
Since $q_{\theta}^* \colon \Pic(X_\theta / Y)_{\Q} \stackrel{\sim}{\rightarrow} (\Pic_T(V^{\theta}) / \Pic(Y))_{\Q}$  is an isomorphism, this implies that $L_C(m \zeta) = \pi^* L_C^{\Gamma}(m \zeta)$, as required. 

\end{proof}

Let $X_1 \ds X_k$ be the small $\Q$-factorial modifications of $X$ over $Y$. 

\begin{theorem}\label{thm:pistarmorifan}
The isomorphism $\pi^{*} \colon \operatorname{Pic}((X/\Gamma) / (Y/\Gamma))_{\mathbb{Q}} \;\xrightarrow{\;\sim\;}\; \operatorname{Pic}(X/Y)_{\mathbb{Q}}$ satisfies: 
\begin{enumerate}[label=(\roman*), ref=(\roman*)]
    \item[(i)] \label{thm:pistarmorifan:i} $\pi^* \mathrm{Nef}((X/\Gamma)/(Y/\Gamma)) = \mathrm{Nef}(X/Y)$; 
    \item[(ii)] \label{thm:pistarmorifan:ii} $\pi^* \mathrm{Mov}((X/\Gamma)/(Y/\Gamma)) = \mathrm{Mov}(X/Y)$; 
    \item[(iii)] \label{thm:pistarmorifan:iii} $\pi^*$ identifies the Mori fan of $X/\Gamma$ over $Y/\Gamma$ with the Mori fan of $X$ over $Y$; and  
    \item[(iv)] \label{thm:pistarmorifan:iv} The small $\Q$-factorial modifications of $X/\Gamma$ over $Y/\Gamma$ are $X_i / \Gamma$ for $i = 1 \ds k$. 
\end{enumerate}
\end{theorem}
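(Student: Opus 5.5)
The plan is to transport everything through the Cox ring and GIT. Let $V = \Spec R(X,\pi^*\mathcal{L})$, with $\mathcal{L} = \{L_1,\dots,L_n\}$ the fixed basis of $\operatorname{Pic}((X/\Gamma)/(Y/\Gamma))_{\Q}$. By (5), $\pi^*\mathcal{L}$ is a basis of $\operatorname{Pic}(X/Y)_{\Q}$, so $V$ is the Cox ring spectrum of $X$ over $Y$, with its torus $T=\Gm^n$. By \eqref{eq:Coxinvariant}, the Cox ring of $X/\Gamma$ over $Y/\Gamma$ is $R(X,\pi^*\mathcal{L})^\Gamma$, with spectrum $V/\Gamma$ and the same torus $T$. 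The $\Gamma$-action on $R(X,\pi^*\mathcal{L})$ preserves the $\Z^n$-grading, since $\pi^*L_j$ carries a canonical $\Gamma$-linearization. Hence $T\times\Gamma$ acts on $V$, and \cref{prop:GITfansthesame} gives $\Sigma(V)=\Sigma(V/\Gamma)$, with the same chambers.

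Next I fix a chamber $C$ containing some $\theta$. \cref{lem:GITtheWTequal} gives $(V/\Gamma)^\theta = V^\theta/\Gamma$. I then use that GIT quotients commute with finite quotients, which follows from \cref{lem:grouprojquotient} applied to $\bigoplus_n (R_{n\theta})^\Gamma$. Together these identify $(V/\Gamma)\git_\theta T \cong (V\git_\theta T)/\Gamma = X_\theta/\Gamma$ over $Y/\Gamma$. In particular, if $C$ is the chamber with $L_C(\overline C)=\mathrm{Nef}(X/Y)$, then the quotient in the chamber $C$ for $V/\Gamma$ recovers $X/\Gamma$. So $\theta$ corresponds to a relatively ample class on $X/\Gamma$, i.e.\ $C$ is also the ample chamber for $X/\Gamma$. \cref{thm:LCNefMori} applied to $X/\Gamma$ (a relative Mori dream space by \cref{thm:XWMDS}) then gives $L_C^\Gamma(\overline C)=\mathrm{Nef}((X/\Gamma)/(Y/\Gamma))$. \cref{lem:LCLCWcomm} says $\pi^*\circ L_C^\Gamma = L_C$, which yields (i).

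For (ii) and (iv), the step I expect to be the main obstacle is showing that the chambers whose GIT quotients are small $\Q$-factorial modifications are the same upstairs and downstairs. I would characterize $\operatorname{Mov}$ as the union of closures of the chambers $C'$ for which the unstable locus $V\setminus V^{\theta'}$ has codimension $\geq 2$ (equivalently, $V^{\theta'}\git T$ is a small modification in Ohta's description). Since $V\to V/\Gamma$ is finite and $(V/\Gamma)^{\theta'}=V^{\theta'}/\Gamma$, codimensions of unstable loci agree, so the same chambers qualify. The corresponding quotients are then $X_i/\Gamma$ by the argument above. Alternatively, I can argue directly: $\pi^*$ preserves movability, since sections of $L^{\otimes m}$ on $X/\Gamma$ pull back to invariant sections and base loci are $\pi^{-1}$ of base loci. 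Conversely, a movable class on $X$ is nef on some $X_i$, and $X_i/\Gamma$ is a $\Q$-factorial small modification of $X/\Gamma$ by \cref{lem:Qfactorialquotient}, since $\pi$ is finite so codimension is preserved.

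Finally, the Mori fan is the subfan of the GIT fan supported on $\operatorname{Mov}$ (\cref{thm:LCNefMori}). Since the GIT fans coincide and $\pi^*\circ L^\Gamma_C=L_C$ with the same chamber $C$, (ii) gives (iii). Each cone of the fan corresponds to the nef cone of the respective quotient, which completes (iv).
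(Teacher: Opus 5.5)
Your proposal is correct, and it reaches (iii) and (iv) exactly as the paper does. Those two parts use equality of GIT fans (\cref{prop:GITfansthesame}), the relation $\pi^*\circ L_C^\Gamma=L_C$ from \cref{lem:LCLCWcomm}, the subfan description in \cref{thm:LCNefMori}, and \cref{lem:grouprojquotient} to identify $\Proj\bigl(\bigoplus_n R_{n\theta_i}\bigr)^\Gamma$ with $X_i/\Gamma$. You differ from the paper on (i) and (ii).

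\textbf{Part (i).} The paper does not use the Cox ring here. For a contracted curve $\ell$ it uses $\deg(\pi^*L)|_\ell=e\,\deg(L|_{\pi(\ell)})$, and it notes that every contracted curve on $X/\Gamma$ is the image of a component of its preimage, which is contracted on $X$. Your chamber argument is sound, but all of its content sits in the step ``$C$ is also the ample chamber for $X/\Gamma$''. The sentence ``the quotient recovers $X/\Gamma$'' does not by itself give that step. The cleanest justification is that a line bundle $M$ on $X/\Gamma$ is relatively ample if and only if $\pi^*M$ is, since $\pi$ is finite and surjective. That fact already yields (i) directly by taking closures of ample cones, so the GIT detour adds nothing there.

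\textbf{Part (ii), first option.} Your first option characterises $\mathrm{Mov}$ by the codimension of unstable loci. You do not prove this characterisation, and it is not among the facts recorded from Ohta in \cref{thm:LCNefMori}. It would need its own argument in this relative, $\mathrm{Pic}$-graded setting, so rely on your second option.

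\textbf{Part (ii), second option, forward direction.} This needs only the easy inclusion $B_Y(\pi^*L)\subset\pi^{-1}B_{Y/\Gamma}(L)$, as in the paper. The equality of base loci you assert is not needed for this direction. It also only holds for powers divisible by $|\Gamma|$.

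\textbf{Part (ii), converse direction.} The paper's argument is local and short. Given a section of $\pi^*L$ not vanishing at $x$, take a generic $t$ in the span of its $\Gamma$-translates. Then $\prod_{g\in\Gamma}g\cdot t$ is an invariant section of $\pi^*L^{\otimes|\Gamma|}$ that is nonzero on $\Gamma\cdot x$. Hence $B_Y(\pi^*L^{\otimes m})=\pi^{-1}B_{Y/\Gamma}(L^{\otimes m})$ whenever $|\Gamma|$ divides $m$.

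Your converse goes through the small modifications instead. You then have to check the following:
\begin{enumerate}
\item $\Gamma$ acts on $X_i=V\git_{\theta_i}T$ compatibly with $X\dashrightarrow X_i$.
\item Assumption (5), and hence (i), holds for each $X_i$. This is true because $\mathrm{Pic}_{\Q}=\mathrm{Cl}_{\Q}$ is unchanged by small $\Q$-factorial modifications, compatibly with $\pi^*$.
\item $X/\Gamma\dashrightarrow X_i/\Gamma$ is still an isomorphism in codimension one, using a $\Gamma$-stable domain of isomorphism and finiteness of $\pi$.
\item Nef classes on $X_i/\Gamma$ pull back to movable classes on $X/\Gamma$. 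This follows because $X_i/\Gamma$ is itself a relative Mori dream space by \cref{thm:XWMDS}, so nef classes there are semiample.
\end{enumerate}
All of these hold, so your argument goes through. However, it effectively proves half of (iv) before (ii), and it is noticeably longer than the paper's product-of-translates trick.
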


\begin{proof}
Part (i). This is very similar to the proof of \cref{thm:XWMDS}. Let $L \in \operatorname{Pic}((X/\Gamma) / (Y/\Gamma))$ be $(\rho/\Gamma)$-nef and let $\ell \subset X$ be a curve contracted by $\rho$. Then, for $\tilde{\ell}$ as in diagram \eqref{eq:curvemap}, $\deg (\pi^* L) |_{\ell} = e \deg (L |_{\tilde{\ell}}) \ge 0$ and hence $\pi^* L$ is $\rho$-nef. Conversely, let $\pi^* L$ be $\rho$-nef and $\tilde{\ell}$ a curve contracted by $\rho/\Gamma$. We choose an irreducible component $\ell \subset \pi^{-1}(\tilde{\ell})$. Since $\pi$ is finite, $\ell$ is a curve with $\pi(\ell) = \tilde{\ell}$. Hence, as before, $0 \le \deg (\pi^* L) |_{\ell} = e \deg (L |_{\tilde{\ell}})$ and hence $L$ is $(\rho/\Gamma)$-nef.

Part (ii). Since the movable cone is defined to be the closure of the set of movable line bundles and $\pi^*$ is an isomorphism on relative Picard groups, it suffices to show for $L \in \operatorname{Pic}((X/\Gamma) / (Y/\Gamma))$ that $L$ is movable if and only if $\pi^* L$ is movable. Let $\eta := \rho/\Gamma \colon X/\Gamma \to Y/\Gamma$ so that $\rho = \eta \circ \pi$. If $L$ is a line bundle on $X/\Gamma$, we let $B_{Y/\Gamma}(L)$ denote the set of points in $X/\Gamma$ where the counit of adjunction $\eta^* \eta_* L \to L$ is not surjective; this is the relative base locus of $L$. Similarly, if $M$ a line bundle on $X$ then $B_Y(M)$ is the relative base locus of $M$. We claim that for every line bundle $L$ on $X/\Gamma$, $B_Y(\pi^* L) \subset \pi^{-1} B_{Y/\Gamma}(L)$ and for all $m$ divisible by $|\Gamma|$, $B_Y(\pi^* L^{\otimes m}) = \pi^{-1} B_{Y/\Gamma}(L^{\otimes m})$. 

Both claims are local on $Y$. Therefore, we may assume that $Y = \Spec A$ is affine and $Y/\Gamma = \Spec A^{\Gamma}$. Then $\rho^* \rho_* (\pi^* L) \cong \mathcal{O}_X \otimes_A \Gamma(X,\pi^* L)$ such that the morphism $\rho^* \rho_* (\pi^* L) \to \pi^* L$ equals the multiplication map $\mathcal{O}_X \otimes_A \Gamma(X,\pi^* L) \to \pi^* L$. In particular, $B_Y(\pi^* L)$ is the (closed, since $\rho$ is projective) set of all points in $X$ where all global sections $s \in \Gamma(X,\pi^* L)$ vanish. Certainly, $A \otimes_{A^{\Gamma}} \Gamma(X/\Gamma,L) \subset \Gamma(X,\pi^* L)$. Therefore, if $x'\notin B_{Y/\Gamma}(L)$ then there exists $s \in \Gamma(X/\Gamma,L)$ with $s(x') \neq 0$ and hence $(1 \otimes s)(x) \neq 0$ for all $x \in \pi^{-1}(x')$. This implies that $B_Y(\pi^* L) \subset \pi^{-1} B_{Y/\Gamma}(L)$. 

For the inclusion $\pi^{-1} B_{Y/\Gamma}(L^{\otimes m}) \subset B_Y(\pi^* L^{\otimes m})$, let $x \in B_Y(\pi^* L)$; this means that we can choose $s \in \Gamma(X,\pi^* L)$ such that $s(x) \neq 0$. Then, for each $g \in \Gamma$, $(g \cdot s)(g \cdot x) = s(x) \neq 0$. Let 
\[
E = \mathrm{Span}_{\Q}\{ g \cdot s \, | \, g \in \Gamma \} \subset \Gamma(X,\pi^* L).
\]
Then for each $z \in G \cdot x$ the functional $\mathrm{ev}_z \colon E \to \C$, $\mathrm{ev}_z(t) = t(z)$ has kernel a hyperplane. Since $\Gamma \cdot x$ is a finite set we can therefore pick $t \in E \setminus \bigcup_{z \in \Gamma \cdot x} \Ker \mathrm{ev}_z$. Then $\tilde{t} = \prod_{g \in \Gamma} g \cdot t$ is a global section of $\pi^* L^{ \otimes |\Gamma|}$ that is $\Gamma$-invariant and doesn't vanish on any point in $\Gamma \cdot x$. Lemma~\ref{lem:intlinebundle} says that $\tilde{t} \in \Gamma(X,\pi^* L)^{\Gamma} = \Gamma(X/\Gamma,L)$. Since $\tilde{t}(\pi(x)) \neq 0$, this shows that $\pi^{-1} B_{Y/\Gamma}(L^{\otimes |\Gamma|}) \subset B_Y(\pi^* L^{\otimes |\Gamma|})$. 

Part (iii). If $V = \Spec R(X,\mathcal{L})$ and $\theta \in \mathbb{X}^{\bullet}(T)$ such that $X = V \git_{\theta} T$ then \eqref{eq:Coxinvariant} shows that $X/\Gamma = (V/\Gamma) \git_{\theta} T$. Moreover, \cref{prop:GITfansthesame} says that we have an equality of GIT cones $\Sigma(V) = \Sigma(V/\Gamma)$. Therefore, \cref{thm:LCNefMori} says in this case that:
\[
\mathcal{M}(X/Y) = \{ \sigma \in L_C(\Sigma(V)) \, | \, \sigma \subset \mathrm{Mov}(X/Y) \},
\]
\[
\mathcal{M}((X/\Gamma)/(Y/\Gamma)) = \{ \sigma \in L_C^{\Gamma}(\Sigma(V)) \, | \, \sigma \subset \mathrm{Mov}((X/\Gamma)/(Y/\Gamma)) \}.
\]
Therefore, part (iii) follows from \cref{lem:LCLCWcomm} and part (ii). 

Since $X/\Gamma \to Y/\Gamma$ is a relative Mori dream space, the small $\Q$-factorial modifications of $X/\Gamma$ over $Y/\Gamma$ are the spaces $Z_i = \Proj \bigoplus_{n \ge 0} \Gamma(X/\Gamma,M_i^{\otimes n})$, where $M_i$ is an element in the interior of the Mori chamber $\sigma_i$ of the Mori fan of $X/ \Gamma \to Y/\Gamma$. But part (iii) says that $\pi^* \sigma_i$ are precisely the Mori chambers for $X \to Y$. Thus, up to relabelling, $X_i = \Proj \bigoplus_{n \ge 0} \Gamma(X,(\pi^* M)_i^{\otimes n})$. Since $\Gamma$ acts on each space $\Gamma(X,(\pi^* M)_i^{\otimes n})$, it acts by algebra automorphisms on $\bigoplus_{n \ge 0} \Gamma(X,(\pi^* M)_i^{\otimes n})$ and hence on $X_i$ too. Moreover, by \cref{lem:grouprojquotient}, we have  $Z_i = X_i /\Gamma$, as required.  
\end{proof}

\subsection{Quotients of symplectic singularities}

Throughout this section, let $Y$ be a symplectic singularity. 

\begin{lemma}\label{lem:surfaceblowup}
    Let $S$ be a surface with a Kleinian singularity. 
    \begin{enumerate}
        \item[(i)] If $p \colon S' \to S$ is the blowup of the reduced singular locus then $p$ is a crepant projective partial resolution of $S$. 
        \item[(ii)] The repeated blowup of the reduced singular locus eventually terminates in the minimal resolution $\widetilde S\to S$. 
    \end{enumerate}
\end{lemma}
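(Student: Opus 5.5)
The plan is to work with the explicit local models of Kleinian singularities. The problem is local, and a Kleinian surface singularity is analytically (indeed étale-locally) isomorphic to $\C^2/G$ for a finite $G \subset SL_2(\C)$. Each such singularity has a normal form as a hypersurface in $\C^3$ of type $A_n$, $D_n$, $E_6$, $E_7$ or $E_8$. Since blowing up a reduced point commutes with étale base change and with completion, it suffices to treat the isolated singular point $0$ of one of these hypersurfaces $S = \{f=0\} \subset \C^3$. For such a point the reduced singular locus is $\{0\}$ with ideal $\mf{m}=(x,y,z)$, so $S'$ is the strict transform of $S$ in $\mathrm{Bl}_0 \C^3$.

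For part (i), the main input is the adjunction computation for the blowup $\beta\colon \widetilde{\C^3} \to \C^3$ at the origin. Let $E \cong \mathbb{P}^2$ be the exceptional divisor. Then $K_{\widetilde{\C^3}} = \beta^*K_{\C^3} + 2E$. Since $f$ has multiplicity exactly $2$ at $0$ in every ADE normal form, we have $\beta^*S = S' + 2E$.

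Adjunction then gives
\[
K_{S'} = (K_{\widetilde{\C^3}} + S')|_{S'} = \beta^*(K_{\C^3}+S)|_{S'} + (2E - 2E)|_{S'} = p^*K_S .
\]
So $p$ is crepant. Here $S'$ is a Cartier divisor in a smooth threefold, hence Gorenstein. It is normal by Serre's criterion: it is $S_2$ as a hypersurface, and $R_1$ will follow from the chart computation below, which shows its singularities are isolated. Moreover $p$ is projective, birational, an isomorphism over $S\setminus\{0\}$, and satisfies $p_*\mc{O}_{S'}=\mc{O}_S$ because $S$ is normal. Hence $p$ is a crepant projective partial resolution.

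To check isolatedness, and to set up (ii), I would compute in the standard charts. The exceptional curve $p^{-1}(0)$ is the conic $\{f_2=0\}\subset\mathbb{P}^2$, where $f_2$ is the quadratic part of $f$. The singular points of $S'$ are again Kleinian: for example $A_n$ yields a single $A_{n-2}$ point, and $D_n$ yields $A_1$ and $D_{n-2}$ points. A cleaner uniform argument is to say that $S'$ is a normal Gorenstein surface with a crepant map to a rational double point. Then any resolution of $S'$ composed with $p$ is a resolution of $S$ with non-positive discrepancies, so $S'$ has canonical Gorenstein, i.e.\ Kleinian, singularities. This avoids the case-by-case check, although one would still want isolatedness from the charts.

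For part (ii), I would argue by induction on an invariant of the singularities. Let $\mu(S)$ be the number of exceptional $(-2)$-curves in the minimal resolution $\widetilde S$, summed over the singular points. By part (i), $S'$ has Kleinian singularities. Because $p$ is crepant and $\widetilde S$ dominates every crepant partial resolution, the minimal resolution of $S'$ is $\widetilde S$ itself. Indeed, the minimal resolution $\widetilde{S'} \to S' \to S$ is crepant, hence it is the minimal resolution of $S$.

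Since $p$ is not an isomorphism, at least one exceptional curve of $\widetilde S\to S$ is not contracted in $S'$. Therefore $\mu(S')<\mu(S)$. Iterating, $\mu$ reaches $0$ after finitely many steps, at which point the surface is smooth and equals $\widetilde S$.

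The main obstacle I expect is rigorously justifying that $S'$ is normal with only Kleinian singularities, and that the relevant identifications are compatible with the local reduction. Specifically, the minimal resolution must factor through $S'$ and must be the minimal resolution of $S'$. The factoring holds because $\widetilde S$ is the unique crepant resolution, and every crepant partial resolution is dominated by it since the minimal model is unique for surfaces. If the uniform argument proves slippery, the fallback is the explicit chart computation for the five ADE families.
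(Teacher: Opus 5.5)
\textbf{Verdict: correct, but by a genuinely different route from the paper.} The paper proves nothing directly here. It cites Artin--Lipman for normality and crepancy of the blowup of the reduced singular locus, and Artin--Lipman \S 6 together with the Stacks project for termination in the minimal resolution. Your argument is self-contained:
\begin{itemize}
\item crepancy comes from adjunction in $\mathrm{Bl}_0\mathbb{C}^3$, where the ambient discrepancy $2$ cancels the multiplicity $2$ of the double point;
\item $S'$ has Kleinian singularities because it is crepant over a canonical singularity;
\item part (ii) follows because a crepant resolution of a surface has no $(-1)$-curves, so it is \emph{the} minimal resolution. Hence $\widetilde S\to S$ factors through $S'$, and the number of exceptional curves strictly drops at each blowup.
\end{itemize}
This makes the mechanism transparent, and it avoids tracking which singularity types appear after each blowup. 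The paper's route is shorter. It leans on the general theorem (Tjurina, Lipman) that blowing up the maximal ideal of a rational surface singularity gives a normal surface, which is exactly the step your argument handles least completely.

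Two points need tidying.

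First, your normality step ($R_1$, i.e.\ isolatedness of singularities of $S'$) is only checked for $A_n$ and $D_n$. The $E_6,E_7,E_8$ cases are routine. In the $z$-chart the strict transforms are $x_1^2+y_1^3z+z^2$, $x_1^2+y_1^3z+y_1z^2$ and $x_1^2+y_1^3z+z^3$, giving $A_5$, $D_6$ and $E_7$. Along the reduced exceptional line $x_1=z=0$ one has $\partial_z=y_1^3\neq 0$ away from the origin, and the $y$-chart is smooth.

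Alternatively, you can argue uniformly. Suppose $S'$ were not normal. Since $S'$ is Gorenstein, its normalization $\nu\colon S^\nu\to S'$ satisfies $K_{S^\nu}=\nu^*K_{S'}-D$, where $D>0$ is the conductor divisor. $D$ is supported over $0$, because $p$ is an isomorphism over the normal locus $S\setminus\{0\}$. A component of $D$ is then a curve exceptional over $S$ with negative discrepancy, contradicting that Kleinian singularities are canonical.

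Second, ``non-positive discrepancies'' should read ``non-negative''. The point is that discrepancies over $S'$ coincide with those over $S$, and the latter are $\ge 0$.
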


\begin{proof}

Part (i). Crepancy (in slightly different terms) and normality of the blowup are explained for example in \cite[Prop. 5.1]{ArtinLipman1986}. Blowing up is a projective and birational morphism, hence it is a crepant projective partial resolution.

Part (ii) is explained in \cite[\S 6]{ArtinLipman1986} (see also \cite[\S 54.12]{stacks}).


\end{proof}

For a variety $Y$ with symplectic singularities,  we define (compare Reid \cite[1.14]{Reid3fold}) $Y_1 \subset Y$ to be the set of points $y$ such that either $Y$ is smooth at $y$ or the analytic neighbourhood of $y$ in $Y$ is isomorphic to 
\begin{equation}\label{lem:functorial-resolution-codim-two.1}
        U_y    \simeq B\times S,
\end{equation}
where $B$ is a polydisk and $S$ a Kleinian singularity with singular point $0$. The complement $\mathrm{Diss}(Y)$ to $Y_1$ in $Y$ is called the \emph{dissident locus} in loc. cit.. The following folklore result is stated in several places, but we do not know of any proof in the literature. We included a proof since the result is key to the proof of \cref{prop:Gactionlifts}. 

\begin{lemma}\label{lem:functorial-resolution-codim-two}
Let $Y$ be a variety with symplectic singularities and $\rho\colon X\to Y$ be a crepant partial resolution such that $X$ has terminal singularities. Let $X_1:=\rho^{-1}(Y_1)$. Then the following hold.
\begin{enumerate}[label=(\roman*), ref=(\roman*)]
    \item \label{lem:functorial-resolution-codim-two:i} The complement $Y\setminus Y_1$ has codimension at least four.

    \item \label{lem:functorial-resolution-codim-two:ii}Starting from $Y_1^{(0)}:=Y_1$ and defining inductively
    \[
        Y_1^{(i+1)}
        :=
        \mathrm{Bl}_{Z_i}(Y_1^{(i)}),
        \qquad
        Z_i:=\mathrm{Sing}(Y_1^{(i)})_{\mathrm{red}},
    \]
    whenever $Y_1^{(i)}$ is singular, the process terminates after
    finitely many steps in a smooth variety $\widetilde Y_1$. The
    resulting morphism $\pi_1\colon\widetilde Y_1\to Y_1$ is a projective crepant, hence symplectic, resolution.

    \item \label{lem:functorial-resolution-codim-two:iii} The morphism $\pi_1$ is the unique projective crepant
    resolution of $Y_1$, up to a unique isomorphism over $Y_1$.

    \item \label{lem:functorial-resolution-codim-two:iv}The restriction $\rho_1:=\rho|_{X_1}\colon X_1\rightarrow Y_1$ is a projective crepant resolution. Consequently there is a unique
    isomorphism $\alpha\colon X_1\xrightarrow{\sim}\widetilde Y_1$ over $Y_1$.

    \item \label{lem:functorial-resolution-codim-two:v}Every algebraic group acting algebraically on $Y$ preserves
    $Y_1$, and its action lifts uniquely to an algebraic action on $X_1$, for which $\rho_1$ is equivariant.
\end{enumerate}
\end{lemma}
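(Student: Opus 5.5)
For part (i), we will use Kaledin's stratification theorem. It says that a symplectic singularity $Y$ has finitely many symplectic leaves, all of even dimension. It also says that the transversal slice to any leaf $L$ is itself a conic symplectic singularity, so that analytically near a point of $L$ we have $Y \simeq B \times S_L$. Leaves of codimension two have two-dimensional symplectic slices, and these are Kleinian singularities. Hence every point of a leaf of codimension $\le 2$ lies in $Y_1$. Since the codimension of every leaf is even, the complement $Y \setminus Y_1$ is a union of leaves of codimension $\ge 4$.

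For part (ii), we will work analytically locally. Blowing up the reduced singular locus commutes with flat base change, and in particular with analytic localisation. On $U_y \simeq B \times S$ the reduced singular locus is $B \times \{0\}$, and its blowup is $B \times \mathrm{Bl}_0 S$. By \cref{lem:surfaceblowup}(i) this is again of the form smooth $\times$ (a surface with Kleinian singularities), so the property of lying in $Y_1$ propagates to each $Y_1^{(i)}$. \cref{lem:surfaceblowup}(ii) shows that the process stops locally after a number of steps bounded by the number of exceptional curves. Finiteness globally then follows because $Y_1$ is quasi-compact and only finitely many slice types occur, one per leaf. Each step is projective, and it is crepant by \cref{lem:surfaceblowup}(i) together with the analytic-local nature of discrepancy. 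Therefore $\pi_1$ is a projective crepant resolution, and it is symplectic by the remark after \cref{dfn:partial_symplectic_resolution}.

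For part (iii), let $\pi' \colon Z \to Y_1$ be another projective crepant resolution. Both $Z$ and $\widetilde Y_1$ are then crepant resolutions, so the birational map between them is an isomorphism in codimension one: crepant exceptional divisors are exactly the divisors of discrepancy zero, as in the proof of \cref{lem:Qfactorialterminaliffsmallmodification}. To upgrade this to an isomorphism, I would argue analytically locally over $B \times S$. There the exceptional fibres are configurations of curves over the Kleinian point. A crepant resolution of $B \times S$ is relatively a crepant resolution of $S$ in families, and the minimal resolution of a surface is unique, so the local relative ample cones agree. More concretely, the strict transform of a $\pi_1$-ample line bundle is $\pi'$-nef, because there are no flopping curves: the curves in $Z$ over $Y_1$ sweep out divisors, so all small contractions over $Y_1$ are trivial. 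This gives a morphism $Z \to \widetilde Y_1$ that is finite and birational onto a normal variety, hence an isomorphism; it is unique because the varieties are separated and the morphism is birational. I expect this to be the main obstacle: making the "no flops over surface-type singularities" argument rigorous. The first approach I would try is the locally-trivial analytic structure combined with the fact that exceptional curves over $Y_1$ are $\mathbb{P}^1$'s of self-intersection $-2$ on the slice.

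For part (iv), $X_1$ is terminal and symplectic, by \cite[Lemma 4.3]{Alberto_Malaney_partial_resolutions}. By \cref{thm:defn_terminal} its singular locus has codimension $\ge 4$. Over $B \times S$, however, any singular point of $X_1$ would lie in a fibre of dimension $\le 1$ over a codimension-two stratum. Such singularities would form a locus of codimension $\le 2$ in $X_1$, which is impossible. Hence $X_1$ is smooth, and (iii) gives $\alpha$.

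For part (v), an algebraic group acting algebraically on $Y$ preserves its symplectic leaves up to the slice type, because analytic germs are preserved. So it preserves $Y_1$ and the reduced singular loci $Z_i$. Blowups of invariant reduced centres carry induced actions, algebraic in the group variable since blowup commutes with flat base change along $G \times Y_1 \to Y_1$. This gives an action on $\widetilde Y_1$, which we transport to $X_1$ via $\alpha$. Uniqueness follows from (iii), or simply because two lifts agree on the dense open set where $\rho_1$ is an isomorphism.
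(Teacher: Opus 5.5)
Parts (i), (ii) and (v) follow essentially the paper's argument. Part (iv), however, has a genuine gap. The sentence ``such singularities would form a locus of codimension $\le 2$ in $X_1$'' carries the whole content of (iv), and nothing you say implies it. Your fibre-dimension bound only gives $\dim \mathrm{Sing}(X_1)\le \dim \rho(\mathrm{Sing}(X_1))+1$, which is an \emph{upper} bound. To contradict $\operatorname{codim}_X \mathrm{Sing}(X)\ge 4$ you need a \emph{lower} bound: that $\rho(\mathrm{Sing}(X_1))$, if nonempty, contains an entire codimension-two leaf. Absent further input, nothing prevents it from being a single point of a leaf, which would be perfectly compatible with terminality. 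The local product decomposition $B\times S$ is known for $Y$, not for the unknown $X$, so it does not force singularities of $X$ to spread along the leaf.

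The missing idea, used in the paper, is Poisson-theoretic. $\mathrm{Sing}(X)$ is a Poisson subvariety of the symplectic variety $X$ (Kaledin), and $\rho$ is a proper Poisson morphism. Hence $T=\rho(\mathrm{Sing}(X)\cap X_1)$ is a closed Poisson subvariety of $Y_1$, i.e.\ a union of symplectic leaves. It misses $Y_{\mathrm{reg}}$, so if nonempty it contains a leaf $\mathcal{L}$ with $\dim\mathcal{L}=\dim Y-2$. Then $\dim \mathrm{Sing}(X)\ge \dim\mathcal{L}=\dim X-2$, a contradiction.

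For (iii) the paper simply cites Kaplan--Schedler \cite{KaplanSchedler_crepant_gluing_stratified} (Corollaries 3.38, 3.39). Your alternative is a reasonable strategy, but as you suspect it is not yet a proof, and it has the same defect. The claim ``the curves in $Z$ over $Y_1$ sweep out divisors'' cannot be read off from the product structure of $Y_1$, because the structure of $Z$ over $B\times S$ is exactly what is unknown: special fibres over a leaf could a priori acquire extra components that do not deform.

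One way to close it is as follows.
\begin{itemize}
\item The strict transform $L'$ of a $\pi_1$-ample bundle is $\pi'$-movable, since $Z$ and $\widetilde Y_1$ agree off codimension two.
\item So any contracted curve $C$ with $L'\cdot C<0$ lies in the relative base locus of every $L'^{\otimes m}$.
\item Such a $C$ is rational, being a component of a fibre of dimension $\le 1$ of a resolution of rational singularities.
\item By the Ran/Wierzba--Wi\'sniewski estimate, $C$ moves in a family of dimension $\ge \dim Z-2$ of contracted, numerically equivalent curves. This family sweeps out a divisor inside that base locus, which is a contradiction.
\end{itemize}
Hence $L'$ is $\pi'$-nef. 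The relative basepoint-free theorem (with $K_Z=0$) then makes $L'$ semiample and yields a morphism $Z\to\widetilde Y_1$ over $Y_1$. This morphism is an isomorphism because it is small onto a smooth variety (van der Waerden purity). Your claim that it is ``finite'' is not justified as stated.
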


\begin{proof}

 \ref{lem:functorial-resolution-codim-two:i} \hspace{0.5pt} This follows immediately from the fact that $Y$ has a finite stratification by symplectic leaves, which are even dimensional and Kaledin's formal neighbourhood theorem \cite{Kaledinsympsingularities} which implies that $\mathrm{Diss}(Y)$ is the union of all leaves of codimension at least four.



\medskip

\noindent
\ref{lem:functorial-resolution-codim-two:ii} \hspace{0.5pt}
Set $Y_1^{(0)}:=Y_1$. If $Y_1^{(i)}$ is singular, let $Z_i:=\mathrm{Sing}(Y_1^{(i)})_{\mathrm{red}}$ and define
\begin{equation}\label{lem:functorial-resolution-codim-two.3}
        \beta_i\colon
    Y_1^{(i+1)}
    :=
    \mathrm{Bl}_{Z_i}(Y_1^{(i)})
    \to
    Y_1^{(i)}.
\end{equation}
In the analytic product decomposition \eqref{lem:functorial-resolution-codim-two.1}, the center of the first
blowup is $\mathrm{Sing}(U_y)_{\mathrm{red}}=B\times\{0\}$. Since $\mathrm{Bl}_{B\times\{0\}}(B\times S)
    \simeq    B\times\mathrm{Bl}_0(S)$, the reduced singular locus of $\mathrm{Bl}_{B\times\{0\}}(B\times S)$ is $B\times
    \mathrm{Sing}(\mathrm{Bl}_0(S))_{\mathrm{red}}$. Therefore, \cref{lem:surfaceblowup} implies statement \ref{lem:functorial-resolution-codim-two:ii}.
\medskip

\noindent
\ref{lem:functorial-resolution-codim-two:iii} \hspace{0.5pt} This claim follows from \cite[Corollaries 3.38, 3.39]{KaplanSchedler_crepant_gluing_stratified}. 

\medskip

\noindent
\ref{lem:functorial-resolution-codim-two:iv} \hspace{0.5pt}
Since $X$ is a terminal symplectic variety, $\mathrm{codim}_X \, \mathrm{Sing}(X)~\geq~4$. We claim that $\mathrm{Sing}(X)\cap X_1=\emptyset$. Suppose otherwise, and consider the closed subset
\[
    T:=
    \rho\bigl(\mathrm{Sing}(X)\cap X_1\bigr)
    \subseteq Y_1.
\]
It is closed because $\rho$ is proper.

The varieties $X$ and $Y$ are Poisson and $\rho$ is a Poisson morphism. Moreover, the singular locus of a symplectic variety is a Poisson subvariety \cite{Kaledinsympsingularities}. It follows that $T$ is a Poisson subvariety of $Y_1$. Consequently $T$ is a union of symplectic leaves of $Y_1$. As shown in the proof of part \ref{lem:functorial-resolution-codim-two:i}, every symplectic leaf of $Y_1$ has codimension either zero or two in $Y$. If $T$ is nonempty, it therefore contains a symplectic leaf $\mc{L}$ satisfying $\dim \mc{L} \geq\dim Y-2$. Since $\mc{L} \subseteq \rho\bigl(\mathrm{Sing}(X)\cap X_1\bigr)$, we obtain
\[
    \dim\mathrm{Sing}(X)     \geq    \dim \mc{L}   \geq     \dim X-2,
\]
contradicting $\mathrm{codim}_X \, \mathrm{Sing}(X)\geq4$. This proves that $\mathrm{Sing}(X)\cap X_1=\emptyset$. Therefore $X_1$ is smooth. Since $\rho$ is crepant, its restriction $\rho_1 \colon X_1 \to Y_1$ is a projective crepant resolution.

\medskip

\noindent
\ref{lem:functorial-resolution-codim-two:v}
To obtain a $G$-action on $X_1$ we will lift the $G$-action from $Y_1$ to $\widetilde{Y}_1$ inductively and use the isomorphism in step \ref{lem:functorial-resolution-codim-two:iv}. Every automorphism of $Y$ preserves the local analytic type of the singularity. Thus, for a group $G$ acting on $Y$, the open set $Y_1$ is $G$-stable. 

Suppose that the $G$-action lifts to $Y_1^{(i)}$. The reduced singular locus $Z_i$ that we were blowing up in step (ii) is $G$-stable. This means that the corresponding ideal sheaf $\mathcal{I}_i$ is $G$-equivariant and hence so too is the Rees algebra 
\[
\mathcal{R}(\mathcal{I}_i) = \bigoplus_{m \ge 0} \mathcal{I}_i^m.
\]
 Since $Y_1^{(i+1)}=\on{Proj}_{Y_1^{(i)}}\mathcal{R}(\mathcal{I}_i)$, the $G$-action lifts to an action on
$Y_1^{(i+1)}$ and the blowup $\beta_i\colon Y_1^{(i+1)} \to Y_1^{(i)}$ is $G$-equivariant.
Iterating gives a $G$-action on $\widetilde Y_1$ for which $\pi_1\colon\widetilde Y_1\rightarrow Y_1$ is $G$-equivariant. By part \ref{lem:functorial-resolution-codim-two:iv}, we get a $G$-action on $X_1$ with $\rho_1$ being $G$-equivariant.

Finally, we note that the lift is unique. Indeed, over the dense open subset $Y_{1,{\mathrm{reg}}}$, the morphism $\rho_1$ is an isomorphism, so the action lifts uniquely. Since $X_1$ is separated, any two lifts must agree everywhere.
\end{proof}

Recall that a morphism $\rho\colon X\to Y$ is \textit{semismall} if there is no irreducible subvariety $V \subseteq X$ such that $2\dim(V)-\dim(\rho(V))>\dim(X)$. 

\begin{proposition}\label{prop:Gactionlifts}
    Let $\rho\colon X \to Y$ be a crepant partial resolution of an affine symplectic singularity $Y$, and let $G$ be a connected algebraic group acting on $Y$. Suppose that $X$ has terminal singularities.
    Then the action of $G$ lifts uniquely to an
action $G\times X\to X$ for which $\rho$ is $G$-equivariant.
\end{proposition}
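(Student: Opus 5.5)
The plan is to first lift the action over the open set $X_1 = \rho^{-1}(Y_1)$ using \cref{lem:functorial-resolution-codim-two}, and then extend it across the small complement. By \cref{lem:functorial-resolution-codim-two}\ref{lem:functorial-resolution-codim-two:v}, $G$ preserves $Y_1$ and there is a unique algebraic action $G \times X_1 \to X_1$ making $\rho_1$ equivariant. Uniqueness of any lift to all of $X$ is immediate: $\rho$ is an isomorphism over $Y_{\mathrm{reg}}$, so two lifts agree on a dense open set, hence everywhere since $X$ is separated. The work is existence, i.e. extending the action from $X_1$ to $X$.

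To control the complement, I use that $\rho$ is semismall. This is standard for crepant partial resolutions of symplectic singularities with terminal source: Kaledin's argument applies on the smooth locus and the leaves are even-dimensional. By \cref{lem:functorial-resolution-codim-two}\ref{lem:functorial-resolution-codim-two:i}, $Y\setminus Y_1$ has codimension at least four. Semismallness then gives
\[
\dim \rho^{-1}(Y\setminus Y_1) \le \tfrac{1}{2}\bigl(\dim X + \dim (Y\setminus Y_1)\bigr) \le \dim X - 2.
\]
So $X\setminus X_1$ has codimension at least two in $X$.

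For the extension I use projectivity of $\rho$ and the normality of $X$. Since $Y$ is affine and $\rho$ is projective, $X = \Proj_Y \bigoplus_{n\ge 0} \rho_*L^{\otimes n}$ for a $\rho$-ample line bundle $L$. Because $X$ is normal and $\mathrm{codim}(X\setminus X_1)\ge 2$, we have $\Gamma(X,L^{\otimes n}) = \Gamma(X_1, L^{\otimes n}|_{X_1})$. By \cite[Proposition~2.4]{KKLV}, after replacing $L$ by a power, $L|_{X_1}$ admits a $G$-linearization; this uses that $G$ is connected and $X_1$ is normal. It then remains to see that this linearization extends, or equivalently that $G$ acts on the graded ring $\bigoplus_n \Gamma(X_1,L^{\otimes n})$, rationally and compatibly with the action on $\mathcal{O}(Y)$. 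The action on each graded piece is rational: each piece is a finite $\mathcal{O}(Y)$-module, and sections on $X_1$ form a rational $G$-module. The ring $\bigoplus_n \Gamma(X,L^{\otimes n})$ is therefore a graded $\mathcal{O}(Y)$-algebra with a compatible $G$-action, and taking $\Proj$ yields a $G$-action on $X$ over $Y$. This action restricts to the given one on $X_1$.

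I expect the main obstacle to be the linearization step. One must check that $G$ preserves the class of $L|_{X_1}$, which holds because $G$ is connected and the Picard group is discrete. One must also check that the resulting action on sections over $X_1$ is algebraic and not merely abstract. If applying \cite{KKLV} on $X_1$ is awkward, the fallback is to run the argument with $G$-stable divisors. Take $L = \mathcal{O}_X(D)$ with $D$ a Weil divisor supported away from codimension-two loci. The exceptional divisors of $\rho$ all meet $X_1$, and are permuted trivially by the connected group $G$, hence are $G$-stable. One can then choose $D$ as a combination of exceptional divisors and pullbacks, which is $\Q$-Cartier because $X$ is $\Q$-factorial. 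The action on $\mathcal{O}(Y)$ together with the $G$-stability of $D$ then directly gives a $G$-action on $\bigoplus_n \Gamma(X,\mathcal{O}(nD))$.
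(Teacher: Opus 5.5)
Your main argument is correct and is essentially the paper's proof. Like the paper, you lift the action to $X_1=\rho^{-1}(Y_1)$ via \cref{lem:functorial-resolution-codim-two}, get $\mathrm{codim}_X(X\setminus X_1)\ge 2$ from semismallness, $G$-linearize a power of $L|_{X_1}$ for a $\rho$-ample $L$, identify $\Gamma(X,L^{\otimes n})$ with $\Gamma(X_1,L^{\otimes n}|_{X_1})$ by normality, and take $\Proj$.

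You should drop the fallback via $G$-stable divisors. It uses $\Q$-factoriality, which is not a hypothesis. Worse, for a small contraction such as $T^*\mathbb{P}^{n-1}\to\overline{\mc{O}}_{\mr{min}}$ with $n\ge 3$ and $G=GL_n$, there are no exceptional divisors and no nonzero $G$-stable divisors at all, so no $\rho$-ample $\mathcal{O}(D)$ of that form exists.
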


\begin{proof}
Recall from \cref{lem:functorial-resolution-codim-two} that $Y_1:=Y\setminus\mathrm{Diss}(Y)$ is the complement of the dissident locus, and $X_1:=\rho^{-1}(Y_1)$. 
The open subset $Y_1$ is $G$-stable and the action of $G$ on $Y_1$ lifts uniquely to
the action on $X_1$ for which $ \rho_1:=\rho|_{X_1} \colon X_1 \to Y_1$ is $G$-equivariant. Moreover, $\mathrm{codim}_Y(Y\setminus Y_1)\geq 4$. We first observe that
\begin{equation} \label{eqn:codim_of_preimage_of_dissident_locus}
    \mathrm{codim}_X(X\setminus X_1)\geq2.
\end{equation}
Indeed, since $\rho$ is crepant, it is semismall by \cite[Proposition 3.5]{BertiniGrossiMauriMazzon2025}. Let $E$ be an irreducible component of
$X\setminus X_1$ and set
$ Z:=\overline{\rho(E)}.$
We know that
    $\mathrm{codim}_Y Z\geq 4.$
Then \eqref{eqn:codim_of_preimage_of_dissident_locus} follows from the definition of semismallness. 

Since crepant partial resolutions are, by definition, projective we can choose a $\rho$-ample line bundle $\mathcal L$ on $X$ and set $\mathcal L_1:=\mathcal L|_{X_1}$. 
Since $G$ is connected and reductive, the Sumihiro linearization theorem (see \cite[Prop.~2.4]{KKV} and citations therein) implies that some power $m$ of $\mathcal L_1$ admits a $G$-linearization. Replacing $\mathcal{L}$ with $\mathcal{L}^m$ we will assume that $\mathcal L_1$ itself is $G$-linearized. The $G$-linearization of $\mathcal L_1$ induces a $G$-action on the graded ring
   $ R(\mathcal{L}_1)  := \bigoplus_{n\geq0} \Gamma (X_1, \mathcal{L}_1^{\otimes n})$
by acting on each summand. 
This action is compatible with multiplication of sections, making
$R(\mathcal{L}_1)$ a graded $G$-algebra.

We now use normality of $X$ and
\eqref{eqn:codim_of_preimage_of_dissident_locus}. Denote by $j$ the open embedding
$
    j\colon X_1\hookrightarrow X
$. Since $\mathcal L^{\otimes n}$ is locally
free, hence reflexive, and the complement of $X_1$ has codimension at
least two, the usual extension property for reflexive sheaves on a
normal variety gives
\begin{equation*}
    \mathcal L^{\otimes n}
    \iso
    j_*\bigl(\mathcal L_1^{\otimes n}\bigr)
    \qquad (n\geq0).
\end{equation*}
Taking global sections provides canonical isomorphisms
\begin{equation*}
    \Gamma(X,\mathcal L^{\otimes n})
    \iso
    \Gamma(X_1,\mathcal L_1^{\otimes n})
    \qquad (n\geq0).
\end{equation*}
These isomorphisms are compatible with multiplication giving rise to an isomorphism of section rings
\begin{equation} \label{eqn:section_ring_on_all_X}
    R(\mathcal L) := \bigoplus_{n\geq0} \Gamma(X,\mathcal L^{\otimes n}) \osi
    \bigoplus_{n\geq0} \Gamma (X_1, \mathcal{L}_1^{\otimes n}) = R(\mathcal{L}_1).  
\end{equation}
This makes $R(\mathcal L)$ into a $G$-algebra. Since $\mathcal{L}$ is ample, there is an isomorphism $X \iso \mathrm{Proj} \, R(\mathcal{L})$ \cite[Lemma~0C6J]{stacks} such that the structure morphism
   $ \mathrm{Proj} \, R(\mathcal{L}) \to \mathrm{Spec} \, R(\mathcal{L})_0$
corresponds to $\rho\colon X\to Y$. Therefore, this endows $X$ with a $G$-action such that $\rho$ is $G$-equivariant.


%

\end{proof}

\begin{remark}
    The proof of \cref{prop:Gactionlifts} shows that even if $G$ is not connected, the action will lift to $X$ if there exists an ample line bundle $L$ in $\mathrm{Pic}(X)$ such that $L |_{X_1} \in \mathrm{Pic}(X_1)^G$. 
\end{remark}
\begin{remark}
    One can generalise \cref{prop:Gactionlifts} to any partial crepant resolution $\rho \colon X\to Y$. To do so, picking a $\Q$-factorial terminalization $Z \stackrel{\pi}{\to} X \to Y$ and a $\rho$-ample line bundle $L$ in $\Pic(X/Y)$. Then replacing $L$ by a multiple if necessary, we may assume that $\pi^* L$ admits a $G$-equivariant structure. Identifying 
    \[
    X \stackrel{\sim}{\rightarrow} \mathrm{Proj} \, \bigoplus_{m \ge 0} \Gamma(Z,\pi^* L^{\otimes m})
    \]
    over $Y$ gives an action of $G$ on $X$. 
\end{remark}

The following lemma describes the situation where we will look to apply \cref{thm:pistarmorifan}. 

\begin{lemma}\label{lem:symplectic_pic_isomorphism_from_lifting}
    If $Y$ is a conic symplectic singularity and $G$ a connected algebraic group acting on $Y$ then the action of any finite subgroup $\Gamma$ of $G$ lifts to any $\Q$-factorial terminalization $X \to Y$ and the quotient map satisfies  
    \[
    \pi^{*} \colon \operatorname{Pic}((X/\Gamma) / (Y/\Gamma))_{\mathbb{Q}} \iso \operatorname{Pic}(X/Y)_{\mathbb{Q}}.
    \]
\end{lemma}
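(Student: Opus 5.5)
The plan is to first lift the action and then deduce the Picard statement from \cref{lem:pullbackgroupiso}(iii), applied once to $X$ and once to $Y$.

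\emph{Lifting.} Let $\rho \colon X \to Y$ be a $\Q$-factorial terminalization. By definition it is a crepant projective partial resolution, and $X$ has terminal singularities. Since $Y$ is a conic symplectic singularity, it is affine, so \cref{prop:Gactionlifts} applies to the connected group $G$. It gives a unique $G$-action on $X$ making $\rho$ equivariant. Restricting this action to $\Gamma \subset G$ yields the required $\Gamma$-action on $X$ lifting the one on $Y$, and $\rho$ is $\Gamma$-equivariant. In particular the quotients $X/\Gamma$, $Y/\Gamma$ and the induced morphism $X/\Gamma \to Y/\Gamma$ all exist.

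\emph{Picard groups.} Write $\pi_X \colon X \to X/\Gamma$ and $\pi_Y \colon Y \to Y/\Gamma$ for the quotient maps. Both $X$ and $Y$ are normal and the $\Gamma$-action is restricted from the connected group $G$. Hence \cref{lem:pullbackgroupiso}(iii) gives isomorphisms
\[
\pi_X^* \colon \Pic(X/\Gamma)_{\Q} \iso \Pic(X)_{\Q}, \qquad \pi_Y^* \colon \Pic(Y/\Gamma)_{\Q} \iso \Pic(Y)_{\Q}.
\]
These are compatible with pullback along $\rho$ and along $\rho/\Gamma$, because $\pi_Y \circ \rho = (\rho/\Gamma) \circ \pi_X$.

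Tensoring with $\Q$ is exact, so the relative Picard group satisfies $\Pic(X/Y)_{\Q} = \Pic(X)_{\Q} / \rho^*\Pic(Y)_{\Q}$, and similarly for the quotients. Thus we have a map of short exact sequences whose middle and left vertical maps are isomorphisms. The induced map on cokernels, which is exactly $\pi^*$ on relative Picard groups, is then an isomorphism.

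The shortcut is that $Y$ and $Y/\Gamma$ are normal affine cones: the $\Gm$-action on the conic singularity commutes with $\Gamma$, because it can be assumed to commute with $G$. By \cref{lem:triviallinebundlesoncone} both Picard groups then vanish, so the relative groups coincide with the absolute ones. I would nonetheless present the cokernel argument, since it does not need that commutativity.

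\emph{Main obstacle.} The only delicate point is the lifting step, and it rests entirely on \cref{prop:Gactionlifts}. One must check that a $\Q$-factorial terminalization satisfies its hypotheses: it is a crepant partial resolution in the paper's sense, it is projective, and $X$ is terminal. These hold by \cref{dfn:q-factorial_terminalisation}. Note also that \cref{prop:Gactionlifts} uses a linearization result for reductive groups. If $G$ is not reductive, I would either replace $G$ by the Zariski closure of $\Gamma$'s connected hull, or invoke \cite[Prop.~2.4]{KKLV}, which handles linearization for connected linear algebraic groups on normal varieties.
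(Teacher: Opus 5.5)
Your proposal is correct and follows the paper's proof. You lift the $G$-action via \cref{prop:Gactionlifts}, then apply \cref{lem:pullbackgroupiso}(iii) to both $X$ and $Y$ and pass to relative Picard groups, which is exactly the ``apply it twice'' argument the paper indicates at the start of \cref{sec:5.Quotients}. For your aside on reductivity, the KKLV linearization result (already used for \cref{lem:pullbackgroupiso}(iii)) is the right fix, whereas passing to a ``connected hull'' of $\Gamma$ makes no sense, since the Zariski closure of a finite group is finite.
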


\begin{proof}
By \cref{prop:Gactionlifts}, the action of $G$ lifts to $X$ making $X \to Y$ equivariant. Since $G$ connected and $X$ is normal, the isomorphism follows from \cref{lem:pullbackgroupiso}(iii).
\end{proof}

\begin{corollary} \label{cor:bijection_between_qterm_of_variety_and_quotient}
   Suppose conditions of \cref{lem:symplectic_pic_isomorphism_from_lifting} hold.    If $\Gamma$ acts freely on an open subset $U$ of $X$ with $\mathrm{codim}_X (X \setminus U) \ge 4$ then 
 \[
 (X_i \to Y) \mapsto (X_i / \Gamma \to Y/\Gamma)
 \]
 is a bijection between the $\Q$-factorial terminalizations of $Y$ and those of $Y/\Gamma$.  
\end{corollary}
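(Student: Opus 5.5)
We fix one $\Q$-factorial terminalization $X \to Y$. Since $Y$ is an affine conic symplectic singularity, \cref{thm:XYMDS} makes $X \to Y$ a relative Mori dream space. By \cref{lem:symplectic_pic_isomorphism_from_lifting}, the $\Gamma$-action lifts to $X$ and condition (5) holds. \Cref{thm:XWMDS} then shows that $X/\Gamma \to Y/\Gamma$ is a relative Mori dream space. \Cref{thm:pistarmorifan}(iv) shows that its small $\Q$-factorial modifications are exactly the $X_i/\Gamma$, where $X_1, \dots, X_k$ are the small $\Q$-factorial modifications of $X$ over $Y$. By \cref{lem:Qfactorialterminaliffsmallmodification}, these $X_i$ are precisely the $\Q$-factorial terminalizations of $Y$. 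Each $X_i$ carries a $\Gamma$-action by \cref{prop:Gactionlifts}, which agrees with the one used in \cref{thm:pistarmorifan}(iv) by uniqueness of lifts. Also, distinct $X_i$ give distinct $X_i/\Gamma$, because the Mori chambers correspond bijectively under $\pi^*$ by \cref{thm:pistarmorifan}(iii). So the proof reduces to one claim: $X/\Gamma \to Y/\Gamma$ is itself a $\Q$-factorial terminalization. Once this holds, \cref{lem:Qfactorialterminaliffsmallmodification} applied over $Y/\Gamma$ identifies the $\Q$-factorial terminalizations of $Y/\Gamma$ with the small $\Q$-factorial modifications $X_i/\Gamma$, and the map is a bijection.

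To prove the claim, note first that $X/\Gamma$ is $\Q$-factorial by \cref{lem:Qfactorialquotient}, and $X/\Gamma \to Y/\Gamma$ is projective by \cref{lem:grouprojquotient}. Next we check that $Y/\Gamma$ is a symplectic singularity; this is standard since $\Gamma \subset G$ preserves the Poisson structure, by Beauville's result on finite quotients. The induced morphism $X/\Gamma \to Y/\Gamma$ is proper, birational and Poisson, and $X/\Gamma$ is normal. \Cref{prop:Poissomorphismsympsing} therefore shows that $X/\Gamma$ has symplectic singularities and that $X/\Gamma \to Y/\Gamma$ is a crepant partial resolution.

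It remains to check terminality, which by \cref{thm:defn_terminal} amounts to showing $\mathrm{codim}\, \mathrm{Sing}(X/\Gamma) \ge 4$. Write $\mathrm{Sing}(X/\Gamma) \subset \pi(\mathrm{Sing}(X)) \cup \pi(X_{\mathrm{reg}} \cap (X\setminus U))$. The first piece has codimension at least $4$ because $X$ is terminal, again by \cref{thm:defn_terminal}, and $\pi$ is finite. On $U \cap X_{\mathrm{reg}}$ the action is free, so the quotient is smooth there; \cref{lem:finitequotientsing} gives this, though étaleness of free quotients suffices. Hence the second piece lies in $\pi(X \setminus U)$, which has codimension at least $4$ by hypothesis and finiteness of $\pi$.

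The main obstacle I expect is the compatibility bookkeeping rather than any estimate. Applying \cref{lem:Qfactorialterminaliffsmallmodification} downstairs requires knowing that every $\Q$-factorial terminalization of $Y/\Gamma$ is induced by a birational map from $X/\Gamma$ over $Y/\Gamma$; this is automatic, since all of them are birational to $Y/\Gamma$. One must also verify that the notion of isomorphism over the base matches on both sides, which follows from the equality of Mori fans in \cref{thm:pistarmorifan}(iii).
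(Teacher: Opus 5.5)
Your proposal is correct and follows essentially the paper's route. You show that $X/\Gamma \to Y/\Gamma$ is a crepant (via \cref{prop:Poissomorphismsympsing}), terminal (singular locus of codimension at least four) $\Q$-factorial partial resolution, then apply \cref{lem:Qfactorialterminaliffsmallmodification} over $Y/\Gamma$ together with \cref{thm:pistarmorifan}(iv); your separate handling of $\pi(\mathrm{Sing}(X))$ is slightly more careful than the paper, which invokes \cref{lem:finitequotientsing} even though that lemma is stated only for smooth varieties.
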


\begin{proof}
    The variety $X$ has symplectic singularities since $X \to Y$ is a crepant partial resolution. Since $X / \Gamma \to Y/\Gamma$ is Poisson, it is also crepant by \cref{prop:Poissomorphismsympsing}; one can also see directly that the canonical bundle is trivial on both $X / \Gamma$ and $Y/\Gamma$. Therefore, by \cref{lem:finitequotientsing} the assumption that $\Gamma$ acts freely on an open subset $U$ of $X$ with $\mathrm{codim}_X (X \setminus U) \ge 4$ implies that the singular locus of $X/\Gamma$ has codimension at least four and hence is a $\Q$-factorial terminalization.  In this case, Lemma~\ref{lem:Qfactorialterminaliffsmallmodification} says that the small $\Q$-factorial modifications of $X/\Gamma \to Y/\Gamma$ are precisely the $\Q$-factorial terminalizations of $Y/\Gamma$. 
    Therefore, the claim follows from \cref{thm:pistarmorifan} (iv). 
\end{proof}

\begin{remark}
Notice that if $Y$ admits a symplectic resolution then by \cite[Corollary 31]{NamikawaPoissondeformations} every $\Q$-factorial terminalization of $Y$ is also smooth. In this case \cref{cor:bijection_between_qterm_of_variety_and_quotient} identifies symplectic resolutions of $Y$ with $\Q$-factorial terminalizations of $Y/\Gamma$.
\end{remark}

  \section{Non-minimal nilpotent orbits} \label{sec:6.Main-theorem}
  In this section we consider the action of a finite group $\Gamma$ by inner automorphisms on the closure of a nilpotent orbit $\mc{O}$ in $\mf{sl}_n$.  The group $G := \GL_n$ acts on $\overline{\mc{O}}_{\lambda}$ via the adjoint action on $\mf{sl}_n$. We say that $\Gamma$ is a group acting by inner automorphisms of $\overline{\mc{O}}$ if there is a group homomorphism $\acthom: \Gamma \to G$ such that the action of $\Gamma$ on $\mc{O}$ comes via this homomorphism. Recall  that all nilpotent orbit closures in type $A$ are normal \cite{KraftProcesi1979} and that they are labelled by the set $\{ \lambda \}$ of partitions  of $n$ \cite{CollingwoodMcGovern}. We denote by ${\mc{O}}_{\lambda}$ the corresponding orbit. The orbit closure $\overline{\mc{O}}_{\lambda}$ has symplectic singularities and all projective symplectic resolutions of $\overline{\mc{O}}_{\lambda}$ are of the form of the cotangent bundle $T^*(G/P_{\lambda}) \to \overline{\mc{O}}_{\lambda}$ \cite{FU_nilpotent_orbits_III}, where $P_{\lambda}$ is a parabolic subgroup of $G$, corresponding to $\lambda$. The group $G$ acts on $T^*(G/P_{\lambda})$ making $T^*(G/P_{\lambda}) \to \overline{\mc{O}}_{\lambda}$ equivariant. Notice that conjugacy classes of $P_{\lambda}$ are in one-to-one correspondence with  compositions of $n$ corresponding to $\lambda^{t}$. Let $T \subset G$ be the maximal torus of diagonal matrices and assume that $P_{\lambda} \supset T$ is a standard parabolic subgroup of $G$ which contains the Borel subgroup of upper-triangular matrices.

Notice that the action of $\Gamma$ on the lift $T^*(G/P_{\lambda})$ is never free. Indeed, every element of $\Gamma$ has finite order, thus is diagonalisable.  We can find a fixed partial flag of the form corresponding to $G/P_{\lb}$ and take zero covector at this point. By \cref{lem:finitequotientsing} the varieties $T^*(G/P_{\lambda})/\Gamma$ are singular.
The main theorem of this section is:
\begin{thm} \label{thm:non-minimal_quotients_are_q-factorial_and_terminal}
    All quotients $T^*(G/P_{\lambda})/ \Gamma$ are $\Q$-factorial and all of them, except possibly for the minimal nilpotent orbit quotient (case $\lambda = (2, 1^{n-2})$), are terminal. 
\end{thm}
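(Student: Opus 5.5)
$\Q$-factoriality is immediate. $X := T^*(G/P_{\lambda})$ is smooth, hence $\Q$-factorial (indeed locally factorial), so \cref{lem:Qfactorialquotient} shows that $X/\Gamma$ is $\Q$-factorial and normal. The real content is terminality. For this I would use Namikawa's criterion \cref{thm:defn_terminal}. Note that $X/\Gamma$ has symplectic singularities, because $\Gamma$ acts symplectically on the smooth symplectic variety $X$ through $G$. So it suffices to show that $\mathrm{Sing}(X/\Gamma)$ has codimension at least four. By \cref{lem:finitequotientsing}, this singular locus is $\pi(Z)$, where $Z=\bigcup_{1\neq g\in\Gamma} X^g$. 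Here faithfulness holds because $\Gamma\cap Z(G)=1$ and $G/Z(G)$ acts faithfully on $T^*(G/P_\lambda)$. It is therefore enough to prove $\mathrm{codim}_X X^g\ge 4$ for every $1\ne g\in\Gamma$.

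Fix such a $g$. It has finite order, so it is semisimple, and since it is not central it has at least two distinct eigenvalues. Let $C=Z_G(g)=\prod_j \GL(V_j)$ be its centralizer, where $\C^n=\bigoplus_j V_j$ is the eigenspace decomposition. The fixed locus $(G/P)^g$ is a finite union of $C$-orbits, namely the $g$-stable partial flags of type $\mu=\lambda^t$. Each such flag is obtained by choosing a partial flag in every $V_j$ with compatible dimension vectors. For a point $x\in(G/P)^g$ we have $T_x X=T_x(G/P)\oplus T^*_x(G/P)$ as $g$-modules, and by Cartan's lemma $T_{(x,\xi)}X^g=(T_{(x,\xi)}X)^g$. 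Hence $X^g$ is the union of the cotangent-type bundles over the components $F$ of $(G/P)^g$, with fibres $(T^*_x(G/P))^g$. It follows that
\[
\mathrm{codim}_X X^g = 2\bigl(\dim G/P - \dim (T_x (G/P))^g\bigr) = 2\dim\bigl(\mf{g}/\mf{p}\bigr)_{\neq 1},
\]
where $(\cdot)_{\neq1}$ denotes the sum of the nontrivial $g$-eigenspaces, and $x=hP$ ranges over fixed points with $g$ acting through $h^{-1}gh\in P$. So the claim reduces to showing $\dim(\mf g/\mf p)_{\ne1}\ge 2$ at every fixed point. This is exactly the codimension-two condition on $G/P$ mentioned in the introduction.

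For the linear algebra, conjugate so that $g\in T$ and the fixed flag is standard. Then $\mf g/\mf p\cong\bigoplus_{a<b}\mathrm{Hom}(W_a,W_b)$, where $W_1,\dots,W_k$ are the successive quotients of the flag, with $\dim W_a=\mu_a$, and $(\mf g/\mf p)_{\ne1}$ is spanned by the root vectors $E_{rs}$ with $r$ in a later block than $s$ and $g_r\ne g_s$. Since $g$ is non-central, there exist indices $r,s$ lying in different blocks with $g_r\ne g_s$; otherwise $g$ would be scalar, because each block is $g$-stable and all cross-block eigenvalues would agree. This gives one such root, and I must find a second. If some block has size at least two, or $k\ge3$, then a second root is available. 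Indeed, fix an eigenvalue pair with $g_r\ne g_s$ across blocks $a<b$. If $|W_a|\ge2$ or $|W_b|\ge2$, either another index in the same block has a different eigenvalue from the opposite index, giving a second pair, or it shares the eigenvalue, in which case it still produces a second root. If instead there is a third block $W_c$, its indices differ in eigenvalue from at least one of $r,s$, producing another pair. The only failing case is $k=2$ with both blocks of size one, i.e.\ $n=2$. The case $k=2$ in general corresponds to $\mu=(1,n-1)$ or $(n-1,1)$, i.e.\ $G/P=\mathbb P^{n-1}$ and $\lambda=(2,1^{n-2})$.

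I expect the main obstacle to be this combinatorial case check, which must be done carefully. For $\mathbb P^{n-1}$ one genuinely finds codimension-two fixed loci, for example a fixed line with $g$ a reflection-type element, so the minimal orbit must be excluded. For all other $\lambda$ one has $k\ge3$ or a two-block composition with both parts at least two. In the latter situation the argument above must be verified with care: when $r$ is the only index of its eigenvalue in block $a$, the second root comes from pairing $s$ with another index of block $a$, or $r$ with another index of block $b$. Combining the bound $\mathrm{codim}_X X^g\ge4$ over all $g\ne 1$ with \cref{lem:finitequotientsing} and \cref{thm:defn_terminal} completes the proof.
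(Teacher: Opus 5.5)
Your reduction matches the paper's: $\Q$-factoriality from the quotient lemma, Namikawa's codimension-four criterion combined with the description of $\mathrm{Sing}(M/\Gamma)$, the doubling $\mathrm{codim}_X X^g=2\dim(\mathfrak g/\mathfrak p)_{\neq 1}$, and then counting root vectors $E_{rs}$ ($r$ in a later block than $s$) with $g_r\neq g_s$. Your third-block argument for $k\ge 3$ is correct. It is also more direct than the paper's route, which passes to a maximal parabolic $\mathfrak p_{max}\supset\mathfrak p$ avoiding $(n-1,1)$ and $(1,n-1)$, and then has to treat $(1,k,1)$ and $n=3$ by hand. The problem is the two-block case. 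This is exactly the step you flagged as the crux, and your case analysis there is wrong as written.

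First, your claim that a second root exists once some block has size at least two is false. For $\mu=(1,n-1)$ and $g=\mathrm{diag}(1,\xi,1,\dots,1)$, the only nontrivial root vector in $\mathfrak g/\mathfrak p$ is $E_{21}$. So the branch ``it shares the eigenvalue, in which case it still produces a second root'' has no justification, and it fails whenever the opposite block is a singleton. Relatedly, $k=2$ covers all Grassmannians, not just $\mu=(1,n-1),(n-1,1)$. Also, ``the only failing case is $n=2$'' contradicts your own later, correct, remark that $\mathbb{P}^{n-1}$ fails.

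Second, in the case that matters, $\mu=(n_1,n_2)$ with $n_1,n_2\ge 2$, the recipe in your last paragraph misses a configuration. Take $\mu=(2,2)$, $g=\mathrm{diag}(\alpha,\beta,\beta,\alpha)$ with $\alpha\neq\beta$, $r=1$, $s=3$. Here $r$ is the only index of its eigenvalue in its block. Yet ``$s$ with another index of block $a$'' gives $E_{32}$ and ``$r$ with another index of block $b$'' gives $E_{41}$, and both are $g$-fixed. The second nontrivial root is $E_{42}$.

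The correct argument takes $r,r'$ in one block and $s,s'$ in the other and looks at all four vectors $E_{sr},E_{sr'},E_{s'r},E_{s'r'}$. If $E_{sr}$ were the only nontrivial one, then $g_s=g_{r'}=g_{s'}=g_r$, contradicting $g_r\neq g_s$. This is essentially the paper's claim for maximal parabolics with both parts at least two. With it inserted, and the false intermediate claims deleted, your proof goes through.
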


%
Fix a projective symplectic resolution $X = T^* (G/P) \to \overline{\mc{O}} = Y$ and let $\pi \colon X \to X / \Gamma$. \cref{lem:symplectic_pic_isomorphism_from_lifting} then provides us with the isomorphism:
  \begin{equation*}
    \pi^{*} \colon \operatorname{Pic}((X/\Gamma) / (Y/\Gamma))_{\mathbb{Q}} \;\xrightarrow{\;\sim\;}\; \operatorname{Pic}(X/Y)_{\mathbb{Q}}.
    \end{equation*}
 Note that $ \operatorname{Pic}(\overline{\mc{O}}/\Gamma) = \operatorname{Pic}(\overline{\mc{O}}) = 0$. Applied in this setting, \cref{thm:pistarmorifan} yields the following:

\begin{corollary}\label{cor:pistarmorifannip}
   Assume that $\mc{O} \neq \mc{O}_{\mr{min}}$. The isomorphism $\pi^{*} \colon \operatorname{Pic}(X/\Gamma)_{\mathbb{Q}}  \iso  \operatorname{Pic}(X)_{\mathbb{Q}}$ satisfies $\pi^* \mathrm{Nef}((X/\Gamma)/ (\overline{\mc{O}}/\Gamma)) = \mathrm{Nef}(X/\overline{\mc{O}})$ and identifies the Mori fan of $X/\Gamma$ over $\overline{\mc{O}}/\Gamma$ with the Mori fan of $X$ over $\overline{\mc{O}}$.
\end{corollary}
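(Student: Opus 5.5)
This is a direct application of \cref{thm:pistarmorifan}, once its hypotheses are checked in the present setting. Take $Y = \overline{\mc{O}}$ and $X = T^*(G/P) \to Y$. Then $X$ is smooth, hence $\Q$-factorial and terminal, so $X \to Y$ is a $\Q$-factorial terminalization of the conic symplectic singularity $Y$. By \cref{thm:XYMDS}, $X \to Y$ is a relative Mori dream space, which gives hypotheses (1)--(4).

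The group $\Gamma$ acts through $\acthom \colon \Gamma \to G$, and $G = \GL_n$ is connected. So \cref{lem:symplectic_pic_isomorphism_from_lifting} applies: the action lifts (indeed it is the natural $G$-action on $T^*(G/P)$), and we obtain assumption (5), namely $\pi^*\colon \Pic((X/\Gamma)/(Y/\Gamma))_{\Q} \iso \Pic(X/Y)_{\Q}$. Strictly, $\Gamma$ should act through its image $\acthom(\Gamma)$; since the kernel acts trivially, we replace $\Gamma$ by its image throughout.

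Next we pass from relative to absolute Picard groups. Since $Y = \overline{\mc{O}}$ is a normal affine cone, \cref{lem:triviallinebundlesoncone} gives $\Pic(Y) = 0$. The quotient $Y/\Gamma = \Spec \C[\overline{\mc{O}}]^{\Gamma}$ is again a normal graded affine cone with degree-zero part $\C$, so $\Pic(Y/\Gamma) = 0$ as well. Hence $\Pic(X/Y)_{\Q} = \Pic(X)_{\Q}$ and $\Pic((X/\Gamma)/(Y/\Gamma))_{\Q} = \Pic(X/\Gamma)_{\Q}$, and $\pi^*$ becomes the stated isomorphism $\Pic(X/\Gamma)_{\Q} \iso \Pic(X)_{\Q}$.

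Finally, \cref{thm:pistarmorifan}(i) gives the equality of nef cones, and \cref{thm:pistarmorifan}(iii) gives the identification of Mori fans. The hypothesis $\mc{O} \neq \mc{O}_{\mr{min}}$ is not used in this argument; it only matters for the interpretation of $X/\Gamma$ as a $\Q$-factorial terminalization, via \cref{thm:non-minimal_quotients_are_q-factorial_and_terminal}. The only point needing any care is that $\Pic(Y/\Gamma)=0$, which reduces to checking that the invariant ring is still a normal positively graded algebra.
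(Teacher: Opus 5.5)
Your proposal is correct and follows the paper's argument. The paper likewise takes the relative isomorphism from \cref{lem:symplectic_pic_isomorphism_from_lifting}, uses $\operatorname{Pic}(\overline{\mc{O}}/\Gamma) = \operatorname{Pic}(\overline{\mc{O}}) = 0$ to pass to absolute Picard groups, and reads off the nef-cone and Mori-fan statements from \cref{thm:pistarmorifan}. Your additional checks are accurate: that $T^*(G/P)\to\overline{\mc{O}}$ is a relative Mori dream space by \cref{thm:XYMDS}, that the invariant ring is a normal graded algebra with degree-zero part $\C$, and that the hypothesis $\mc{O}\neq\mc{O}_{\mr{min}}$ is not used for this particular statement.
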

 The key part of the proof of \cref{thm:non-minimal_quotients_are_q-factorial_and_terminal} below shows that the group $\Gamma$ acts freely on an open subset with complement of codimension at least $4$. This means that \cref{cor:bijection_between_qterm_of_variety_and_quotient} applies in this situation and we obtain the following corollary. 
\begin{corollary}\label{cor:non-minimal_quotients_are_q-factorial_and_terminal}
For $\lambda \neq (2, 1^{n-2})$, the $\Q$-factorial terminalizations of $\overline{\mc{O}}_{\lambda} / \Gamma$ are precisely the quotients $T^*(G/P_{\lambda})/ \Gamma$.
\end{corollary}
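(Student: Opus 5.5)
The plan is to deduce the corollary from \cref{cor:bijection_between_qterm_of_variety_and_quotient}, applied with $Y = \overline{\mc{O}}_{\lambda}$, the connected group $G = \GL_n$ and $X = T^*(G/P_{\lambda})$. By \cite{FU_nilpotent_orbits_III} every projective symplectic resolution of $\overline{\mc{O}}_{\lambda}$ has this form. By the remark following \cref{cor:bijection_between_qterm_of_variety_and_quotient}, every $\Q$-factorial terminalization of $\overline{\mc{O}}_\lambda$ is smooth, hence equal to some $T^*(G/P')$ with $P'$ running over the parabolics attached to compositions rearranging $\lambda^t$. The hypotheses of \cref{lem:symplectic_pic_isomorphism_from_lifting} hold:
\begin{itemize}
\item $\overline{\mc{O}}_\lambda$ is a conic symplectic singularity;
\item $\Gamma$ acts through $\acthom \colon \Gamma \to G$;
\item the action of $\Gamma$ on $\overline{\mc{O}}_\lambda$ factors through $\acthom(\Gamma)/(\acthom(\Gamma)\cap Z(G))$, and since the centre acts trivially we may replace $\Gamma$ by this image, which is finite and acts faithfully.
\end{itemize}
The only remaining input is the freeness hypothesis: we must find a $\Gamma$-stable open $U \subset T^*(G/P)$ on which $\Gamma$ acts freely, with complement of codimension at least $4$. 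This is needed for every $P$ in the relevant family.

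For this I would show that, for every $1 \neq g \in \Gamma$, the fixed locus $(T^*(G/P))^g$ has codimension at least $4$. Then $U$ is the complement of the finite union of these fixed loci. The element $g$ is semisimple, so we may write $g$ diagonal with eigenspace decomposition $\C^n = \bigoplus_j V_j$, $\dim V_j = m_j$, with at least two eigenvalues since $g$ is not central. The fixed locus is the cotangent-type variety
\[
(T^*(G/P))^g = \bigsqcup_{F \in (G/P)^g} (T^*_F(G/P))^g,
\]
and its components are $T^*$ of components of $(G/P)^g$. The latter are products of partial flag varieties $\prod_j \mathrm{Fl}(V_j)$ of types determined by how the flag splits across the eigenspaces. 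Since $\dim T^*(G/P)^g = 2\dim (G/P)^g$, the codimension in $T^*(G/P)$ is $2\,\mathrm{codim}_{G/P}(G/P)^g$. Hence it suffices to prove that $\mathrm{codim}_{G/P} C \ge 2$ for every component $C$ of $(G/P)^g$, i.e. that no component is a divisor.

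To compute this, let $P$ correspond to the composition $(d_1,\dots,d_k)$. A $g$-fixed flag amounts to choosing, for each step $i$, a decomposition $d_i = \sum_j d_{ij}$ compatible with the $m_j$. The codimension is
\[
\dim G/P - \sum_j \dim \mathrm{Fl}(V_j) = \sum_{j<j'} \sum_{i<i'} d_{ij}\, d_{i'j'},
\]
the count of pairs of ``blocks'' in different eigenspaces placed in a different order; equivalently it is $\dim G/P - \dim L_g/(L_g\cap P^w)$. The main obstacle is showing this is never $1$ when $P$ is neither trivial nor the maximal parabolic giving $\mathbb{P}^{n-1}$ or its dual, i.e. when $\lambda \neq (2,1^{n-2})$ and $\lambda \neq (1^n)$. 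The case $\lambda=(n)$ is trivial since then $G/P$ is a point and $\Gamma$ acts trivially.

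I would first reduce to two eigenspaces. Merging eigenspaces only decreases the codimension, and the merged element has the same fixed components up to refinement, so it suffices to treat $\C^n = V_1 \oplus V_2$. Then the codimension is a sum $\sum_{i<i'}(d_{i1}d_{i'2} + \dots)$ over pairs, and a value of $1$ forces a single pair of $1$-dimensional pieces to be swapped. Concretely this means one of $V_1, V_2$ meets the flag in exactly one one-dimensional piece, sitting adjacent to a step of size $1$ in the other. This configuration can be checked to occur only for $\mathrm{Gr}(1,n)$ or $\mathrm{Gr}(n-1,n)$ (with $V_1$ a line or hyperplane). In every other case, either two pairs are swapped or a swapped block has dimension at least $2$, giving codimension at least $2$. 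Carrying out this combinatorial case analysis carefully, for all flag types and all ways of splitting, is the step I expect to be hardest. Once it is done, \cref{cor:bijection_between_qterm_of_variety_and_quotient} gives the bijection, and so the $\Q$-factorial terminalizations of $\overline{\mc{O}}_\lambda/\Gamma$ are exactly the quotients $T^*(G/P_\lambda)/\Gamma$.
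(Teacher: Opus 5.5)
Your architecture matches the paper's. You apply \cref{cor:bijection_between_qterm_of_variety_and_quotient}, use $(T^*(G/P))^g\cong T^*((G/P)^g)$ to reduce freeness off codimension $4$ to $\operatorname{codim}_{G/P}(G/P)^g\ge 2$, and count $g$-nontrivial directions of $\mathfrak{g}/\mathfrak{p}$ in an eigenbasis adapted to the flag. The gap is the decisive step itself. That this codimension is never $1$ except for $\mathbb{P}^{n-1}$ and its dual is only asserted (``can be checked''), and the formula you would check it with is wrong. The left side $\dim G/P-\sum_j\dim\mathrm{Fl}(V_j)$ equals $\sum_{i<i'}\sum_{j\neq j'}d_{ij}d_{i'j'}$. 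Your right side $\sum_{j<j'}\sum_{i<i'}d_{ij}d_{i'j'}$ only counts pairs in which the earlier block carries the smaller eigenspace index. For a cocharacter separating the eigenspaces, that means it sees the normal weights of only one sign. For $\mathbb{P}^1$ at the fixed point $F=V_2$ it gives $0$ instead of $1$. For $G/B$ with $n=3$, $m_1=2$, $m_2=1$ and $F_1=V_2$ it gives $0$ instead of $2$. So the ``inversion'' picture (``placed in a different order'') behind your analysis of the value $1$ is not the right invariant, and that analysis is not yet an argument.

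With the correct count the step is short. For two eigenspaces
\[
\operatorname{codim}_{G/P}C=\sum_{i\neq i'}d_{i1}d_{i'2}=m_1m_2-\sum_i d_{i1}d_{i2},
\]
which is the number of pairs $(v,w)$ of adapted eigenvectors $v\in V_1$, $w\in V_2$ lying in different blocks. This is at least $1$ as soon as there are two blocks. Suppose it equals $1$, with $(v,w)$ the unique such pair, and $m_1,m_2\ge 2$. Then any $v'\neq v$ lies in the block of $w$ and any $w'\neq w$ lies in the block of $v$, so $(v',w')$ is a second pair, a contradiction. Hence, say, $V_1=\mathbb{C}v$, and every eigenvector other than $w$ lies in the block of $v$. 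This forces exactly two blocks, of sizes $n-1$ and $1$. Merging eigenspaces only discards pairs, so this gives $\operatorname{codim}\ge 2$ for all $g\neq 1$ and all $P$ of type $\lambda^t$ with $\lambda\neq(2,1^{n-2}),(1^n)$. Completed this way, your route is uniform in $P$ and replaces the paper's reduction to maximal parabolics and its separate $(1,k,1)$ and $n=3$ cases.

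There are also two slips. First, $G/P$ is a point for the zero orbit $\lambda=(1^n)$, not for $\lambda=(n)$; the latter gives $G/B$ and must be covered by the general count. Second, freeness is needed on only one $X$, since \cref{lem:Qfactorialterminaliffsmallmodification} and \cref{thm:pistarmorifan}(iv) then produce the remaining terminalizations.
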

Before proving \cref{thm:non-minimal_quotients_are_q-factorial_and_terminal} we would like to formulate the following useful criterion.
\begin{criterion} \label{crt:terminality_from_tangent_space}
     Suppose $\Gamma$ is a finite group acting symplectically and faithfully on a smooth symplectic variety $M$. Then $M / \Gamma$ is terminal if and only if for each $\gamma \neq e$, and for all  $x \in M^{\gamma}$
\begin{equation*}
   \codim (T_x M)^{\gamma} = \dim_{\C} T_x M - \dim_{\C} (T_x M)^{\gamma} \ge  4.
\end{equation*}
\end{criterion}
\begin{proof}
    Recall that by \cref{thm:defn_terminal} a symplectic variety is terminal if and only if its singular locus has codimension at least $4$. Therefore, we need to compute the codimension of the singular locus of $M / \Gamma$. By \cref{lem:finitequotientsing}, the singular locus of $M / \Gamma$ is the image of the points $M^{\mr{fix}}$ in $M$ with non-trivial stabilizer. Since the quotient map is finite, the codimension of the singular locus equals the codimension of the set  $M^{\mr{fix}}$ in $M$.  

Since $\Gamma$ is finite, $M^{\mr{fix}} = \bigcup_{\gamma \in \Gamma \setminus \{ 1\} } M^{\gamma}$ and hence for each $x \in M^{\mr{fix}}$, 
\[
\dim_x M^{\mr{fix}} = \max_{\gamma} \dim_{\C} (T_x M)^{\gamma},
\]
where the max is taken over all $\gamma \in \Gamma \setminus \{ e \}$ that fix $x$.
\end{proof}

\begin{proof}[Proof of \cref{thm:non-minimal_quotients_are_q-factorial_and_terminal}]
Let $M = T^*(G/P_{\lambda})$ and $Y = M / \Gamma$. The quotient $Y$ is $\Q$-factorial by \cref{lem:Qfactorialquotient}. To show that $Y$ is terminal we would like to apply \cref{crt:terminality_from_tangent_space}.  
We thus study fixed loci of elements of our finite group $\Gamma$ acting on $T^*\left ( G/P_{\lambda} \right )$ and look for the corresponding codimensions. 
To do so we need first to describe the corresponding tangent spaces.

 Let $p := (gP_{\lambda},\alpha)  \in T^*\left ( G/P_{\lambda} \right )$ be a fixed point of $\gamma$ where $g \in G$ and $\alpha \in T^*_{gP_{\lambda}}(G/P_{\lambda})$. In particular, $\gamma \in G_p$, the stabilizer in the group $G$ of the point $p$. Let $\cdot$ denote the conjugation action.
   It is a standard fact that $T_{gP_{\lambda}}(G/P_{\lambda}) \simeq \mathfrak{g}/g \cdot \mathfrak{p}_{\lambda}$ is the adjoint representation of $G_p \subset G_{gP_{\lambda}}$
 and $T_{gP_{\lambda}}^*(G/P_{\lambda}) \simeq (\mathfrak{g}/g \cdot \mathfrak{p}_{\lambda})^* \simeq (g \cdot\lie{p}_\lambda)^{\bot}$ is the dual coadjoint representation.
  In particular for these two vector spaces the dimension (and, respectively, codimension) of subspaces of fixed points under the action of $\gamma$ is the same.
  These two spaces fit into the (split) exact sequence of $G_p$-modules  
 \begin{equation*}
    0 \rightarrow T_{gP_{\lambda}}^*(G/P_{\lambda}) \rightarrow T_p \left( T^*\left ( G/P_{\lambda} \right ) \right ) \rightarrow T_{gP_{\lambda}}(G/P_{\lambda}) \rightarrow 0 .
\end{equation*}
  This sequence stays exact after taking $\gamma$-invariants
     \begin{equation*}
    0 \rightarrow \left (T_{gP_{\lambda}}^*(G/P_{\lambda}) \right ) ^{\gamma} \rightarrow \left ( T_p \left( T^*\left ( G/P_{\lambda} \right ) \right ) \right )^{\gamma} \rightarrow \left ( T_{gP_{\lambda}}(G/P_{\lambda})\right )^{\gamma} \rightarrow 0 .
\end{equation*}
  This implies that we have the \say{doubling dimension} property 
   \begin{equation} \label{eq:cotangent}
        \dim T_p \left( T^*\left ( G/P_{\lambda} \right ) \right )^{\gamma} = \dim  \, \left( (\lie{g}/ g \cdot \mf{p}_{\lambda})^{\gamma} \oplus \left(\left ( \lie{g}/ g \cdot \mf{p}_{\lambda} \right )^*\right)^{\gamma} \right) = 2 \dim (\lie{g}/  g \cdot \mf{p}_{\lambda})^{\gamma}.
    \end{equation}
and can restrict ourselves to the action of $\gamma$ on $\mathfrak{g}/g \cdot \mathfrak{p}_{\lambda}$.

In our computations, we can always use the $G$-action to move $p \in T^*\left ( G/P_{\lambda} \right )$ to $(P_{\lambda},v)$ for some $v$. 
Note that after that move $\gamma$ acts via $g \gamma g^{-1}$ for the appropriate $g \in G$. We will call this element $\gamma$ as the remaining proof only uses the finiteness of its order. 

For $p=(P_{\lambda},v)$ $\gamma(p) = p$ implies that $\gamma \in P_{\lambda}$ since the normaliser $N_G(P_{\lambda}) = P_{\lambda}$. Then \eqref{eq:cotangent} says that $\dim T_p \left( T^*\left ( G/P_{\lambda} \right ) \right )^{\gamma} = 2 \dim (\lie{g}/  \mf{p}_{\lambda})^{\gamma}$ and it suffices to prove that $\dim (\lie{g}/ \mf{p}_{\lambda})^{\gamma} \le \dim \lie{g}/ \mf{p}_{\lambda} - 2$ for all $\lambda \neq (2,1^{n-2}), (1^n)$.

Recall that the partial flag in $V=\C^n $ corresponding to $P_{\lb}$ is a collection of vector spaces $0 \subset V_1 \subset \ldots \subset V_k \subset V $ with $\on{dim} V_i = \sum_{\sigma(j) \leq i} \lb_{j}^{t}$, such that $g(V_i) \subseteq V_i $ for all $g \in P_{\lb}$. Here $\sigma$ is the permutation of $\lambda_i$ that indexes the chosen parabolic.  
 The following is immediate.
    \begin{claim}
    \label{clm:basis-for-parabolic}
        There is a basis $e_1, \ldots, e_n$ of $V$ consisting of eigenvectors for $\gamma$ such that every subspace in the partial flag stabilized by $P_{\lambda}$ has a basis consisting of a subset of these eigenvectors. 
    \end{claim}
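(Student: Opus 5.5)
The plan is to treat the claim as a simultaneous diagonalisation statement for two commuting pieces of structure: the semisimple element $\gamma$ and the flag $0 \subset V_1 \subset \cdots \subset V_k \subset V$ it preserves. First, since $\gamma$ has finite order it is semisimple, so $V = \bigoplus_{\mu} V(\mu)$ decomposes into its eigenspaces. Second, we have already reduced to the case $\gamma \in P_{\lambda}$, so $\gamma(V_i) \subseteq V_i$ for every $i$. A $\gamma$-stable subspace $W$ of a semisimple operator decomposes as $W = \bigoplus_{\mu} (W \cap V(\mu))$. To see this, write $w \in W$ as $w = \sum_\mu w_\mu$; each component $w_\mu = f_\mu(\gamma) w$ is obtained by applying the Lagrange interpolation polynomial $f_\mu$ in $\gamma$, and $W$ is stable under polynomials in $\gamma$, so each $w_\mu$ lies in $W$. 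Applied to each $V_i$, this gives
\[
V_i = \bigoplus_{\mu} \bigl(V_i \cap V(\mu)\bigr),
\]
and these decompositions are nested in $i$.

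Next, we build the basis eigenspace by eigenspace. Within a fixed eigenspace $V(\mu)$ we get a chain of subspaces
\[
0 \subseteq V_1 \cap V(\mu) \subseteq V_2 \cap V(\mu) \subseteq \cdots \subseteq V_k \cap V(\mu) \subseteq V(\mu).
\]
Choose a basis of $V(\mu)$ adapted to this chain: start with a basis of $V_1 \cap V(\mu)$ and extend it successively to each subsequent term, repeated terms being harmless. Every vector chosen lies in $V(\mu)$, so it is an eigenvector of $\gamma$. Taking the union over all $\mu$ gives a basis $e_1, \ldots, e_n$ of $V$. By the decomposition of $V_i$ above, each $V_i$ is spanned by the union over $\mu$ of the basis vectors of $V_i \cap V(\mu)$, and this union is a subset of $\{e_1, \ldots, e_n\}$.

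There is no real obstacle here. The only point needing care is the observation that a $\gamma$-stable subspace splits along the eigenspaces, and this requires semisimplicity of $\gamma$, which comes from $\gamma$ having finite order over $\C$.

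In the rest of the proof, the claim lets us write $\gamma$ as a diagonal matrix in this basis, with $P_{\lambda}$ still a standard block-upper-triangular parabolic after reordering. Then $\mathfrak{g}/\mathfrak{p}_{\lambda}$ is spanned by the images of the elementary matrices $E_{ij}$ lying below the diagonal blocks, and these are $\gamma$-eigenvectors with eigenvalues $\mu_i \mu_j^{-1}$. This reduces the codimension estimate to counting such pairs $(i,j)$.
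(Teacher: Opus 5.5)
Your argument is correct and is essentially the paper's: the paper takes an eigenbasis of $V_1$ and extends it step by step along the flag, and your eigenspace-by-eigenspace construction is the same induction. Your splitting $V_i = \bigoplus_{\mu} (V_i \cap V(\mu))$, which uses that $\gamma$ has finite order and lies in $P_{\lambda}$, supplies the justification for the extension step that the paper leaves implicit.
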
 
Indeed, one constructs such basis by induction by taking a basis of $\gamma$-eigenvectors in $V_1$ and completing it step-by-step to a complete basis of $V$.
With respect to this basis, $\gamma$ belongs to the maximal torus of diagonal matrices, which is contained in $P_{\lambda}$. We denote by $\xi_i$ the corresponding eigenvalues $\gamma \cdot e_i = \xi_i e_i$.  If $e_1, \dots, e_n$ is the standard basis of $V$ then $P_{\lambda}$ is block upper-triangular. 

Let $e_1^*, \dots, e_n^*$ be the dual basis in $V^*$. A basis in $\lie{g}/ \mf{p}_{\lambda}$ is given by $e_i \otimes e_j^*$, where the entry $(i,j)$ is outside the upper triangular part corresponding to $P_{\lb}$. The element $e_i \otimes e_j^* \in \lie{g}/ \mf{p}_{\lambda}$ is an eigenvector for $\gamma \in \Gamma$ with eigenvalue $\xi_i\xi_j^{-1}$.
    
    
    \begin{example}
        For $\lambda = (2,2,1)$ we have $\lambda^t = (3,2)$, and the tangent space $\lie{g}/\lie{p_{\lambda}}$ can be realised by matrices as in the picture below,  where every $*$ denotes an arbitrary number, with the red part corresponding to $\lie{p_{\lambda}}$. 
    \end{example}

\begin{center}

 \begin{tikzpicture}
 \matrix [matrix of math nodes,left delimiter=(,right delimiter=)] (m)
{
0 & 0 & 0& 0 &0 \\
0 & 0 & 0& 0 &0 \\
0 & 0 & 0& 0 &0 \\
* & * & *& 0 &0 \\
* & * & *& 0 &0\\
};

\draw[
fill=red,
opacity=0.5] (m-3-1.south west) rectangle (m-1-3.north east);
\draw[
fill=red, 
opacity=0.5] (m-5-4.south west) rectangle (m-4-5.north east);
\draw[
fill=red,
opacity=0.5] (m-3-4.south west) rectangle (m-1-5.north east);
\end{tikzpicture}
\end{center}

 \cref{crt:terminality_from_tangent_space} explains that the quotient $M/\Gamma$ is terminal iff for each element $\gamma \in \on {Stab}_{\Gamma}(x)$, $\gamma \ne e$, we have $4 \le \dim_{\C} T_x M - \dim_{\C} (T_x M)^{\gamma}.$ For the quotient  $T^*(G/P_{\lambda})/\Gamma$ from the above description of the tangent space the difference $\dim_{\C} T_x M - \dim_{\C} (T_x M)^{\gamma}$ translates to twice the number of elements $e_{ij}:= e_i \otimes e_j^*  \notin \mf{p}_{\lambda}$ such that $\xi_i \xi_j^{-1} \neq 1$ i.e., $\xi_i \neq \xi_j$. In particular, we will show that there always exist at least two such pairs - we will call this the \textit{terminality condition} for $\gamma$ at $x$. The following is a reformulation of \cref{crt:terminality_from_tangent_space} applied to $T^*(G/P_{\lambda})/\Gamma$.
  \begin{claim}
      If the terminality condition is satisfied for each element $\gamma \in \Gamma$ ($\gamma \ne e$) and each point in fixed point locus of $\gamma$ then the quotient variety $M / \Gamma$ is terminal.
  \end{claim}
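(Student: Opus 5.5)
The claim follows by chaining the reductions already made in the proof of \cref{thm:non-minimal_quotients_are_q-factorial_and_terminal} into \cref{crt:terminality_from_tangent_space}. The criterion applies: $M = T^*(G/P_{\lambda})$ is a smooth symplectic variety, and $\Gamma$ acts symplectically because the $G$-action on a cotangent bundle preserves the canonical symplectic form. The action is also faithful, since $\Gamma \cap Z(G)$ is trivial and the kernel of the $G$-action on $G/P_\lambda$ is the centre. Hence $M/\Gamma$ is terminal if and only if, for every $\gamma \neq e$ and every $x \in M^{\gamma}$, we have $\operatorname{codim}(T_xM)^{\gamma} \ge 4$.

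The plan is to convert the terminality condition into exactly this inequality. Fix $\gamma\neq e$ and $x\in M^\gamma$. Using the $G$-action, move $x$ to $p=(P_\lambda,v)$ and replace $\gamma$ by a conjugate. This changes neither the fixed-point dimensions nor the fact that $\gamma\ne e$, and it puts $\gamma \in P_\lambda$. Next use \cref{clm:basis-for-parabolic} to choose a $\gamma$-eigenbasis adapted to the flag, so that $\gamma$ is diagonal with eigenvalues $\xi_i$ and $P_\lambda$ is standard block upper triangular.

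By the doubling formula \eqref{eq:cotangent},
\[
\dim T_pM - \dim (T_pM)^{\gamma} = 2\bigl(\dim \mathfrak g/\mathfrak p_\lambda - \dim(\mathfrak g/\mathfrak p_\lambda)^{\gamma}\bigr).
\]
The quotient $\mathfrak g/\mathfrak p_\lambda$ has basis the $e_{ij}\notin\mathfrak p_\lambda$, each of which is a $\gamma$-eigenvector with eigenvalue $\xi_i\xi_j^{-1}$. The bracketed quantity is therefore the number of pairs $(i,j)$ with $e_{ij}\notin\mathfrak p_\lambda$ and $\xi_i\neq\xi_j$.

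The terminality condition says this number is at least $2$, so the codimension is at least $4$ at every fixed point of every nontrivial $\gamma$. Then \cref{crt:terminality_from_tangent_space} gives that $M/\Gamma$ is terminal.

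The only point needing care is that the exact sequence used in \eqref{eq:cotangent} is one of $\gamma$-modules, and that taking invariants preserves exactness because $\gamma$ has finite order, so its action is semisimple. Both facts were noted above, so the argument is essentially a bookkeeping combination of those reductions.
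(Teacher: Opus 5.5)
Your argument is correct and is essentially the paper's: the claim is just \cref{crt:terminality_from_tangent_space} combined with the doubling formula \eqref{eq:cotangent} and the eigenvalue count on the basis $e_{ij}\notin\mathfrak{p}_\lambda$, so at least two pairs with $\xi_i\neq\xi_j$ give codimension at least $4$. Your explicit check that $\Gamma$ acts faithfully and symplectically on $M$, so that the criterion applies, supplies hypotheses the paper leaves implicit.
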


We will reduce the problem to the extreme case of maximal parabolic subalgebras and check this condition there. Indeed, if the condition is satisfied for some $\lie{p}_{max} \supset \lie{p}$ then it is satisfied for $\lie{p}$ as well, because
\begin{equation*}
    \on{dim} (G/P)^{\gamma}  \leq \on{dim} (G/P_{max})^{\gamma}+ \on{dim} (P_{max}/P), 
\end{equation*}
    which implies that 
    $  \on{codim}_{G/P}(G/P)^{\gamma} \geq \on{codim}_{G/P_{max}}(G/P_{max})^{\gamma}.
$ 

\begin{lemma}
    If $\lie{p} \subset \lie{sl}_n$ is a parabolic Lie subalgebra which is not maximal, does not correspond to a composition $(1, k, 1)$ and $n>3$ then $\lie{p}$ lies in a maximal parabolic $\lie{p}_{max}$ that is not a $\mf{p}_{\lambda}$ for $\lambda = (2, 1^{n-2})$.
\end{lemma}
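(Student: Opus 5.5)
Parabolic subalgebras $\mf{p}\subset\lie{sl}_n$ containing the standard Borel are, up to conjugacy, indexed by compositions $(c_1,\dots,c_k)$ of $n$, the block sizes of the diagonal. I would work with these compositions. The maximal parabolics containing $\mf{p}$ are exactly the two-block compositions obtained by merging consecutive blocks: choose a cut index $1\le r\le k-1$ and take $(c_1+\dots+c_r,\; c_{r+1}+\dots+c_k)$. Such a maximal parabolic is the stabilizer of a subspace of dimension $d=c_1+\dots+c_r$, so $G/P_{max}$ is a Grassmannian $\mathrm{Gr}(d,n)$. The parabolics of the form $\mf{p}_{\lambda}$ with $\lambda=(2,1^{n-2})$ correspond to $\lambda^t=(n-1,1)$, so they are the compositions $(1,n-1)$ and $(n-1,1)$. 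These are the two maximal parabolics with $G/P\cong\mathbb{P}^{n-1}$ or its dual. The lemma therefore reduces to the following: if $\mf{p}$ is not maximal (so $k\ge 3$), $n>3$, and the composition is not $(1,m,1)$, then some cut gives $d\notin\{1,n-1\}$.

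To find such a cut, I would split into cases according to the end blocks. Take the cut $r=1$, which gives $d=c_1$. This fails only if $c_1=1$, since $c_1\le n-2$ because $k\ge3$. Similarly the cut $r=k-1$ gives $n-d=c_k$, and this fails only if $c_k=1$. So the remaining case is $c_1=c_k=1$.

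Since the composition is not $(1,m,1)$, this case forces $k\ge 4$. Now take the cut $r=2$, which gives $d=1+c_2\ge 2$. On the other side,
\[
n-d = c_3+\dots+c_k \ge c_3+c_k \ge 2,
\]
so $d\le n-2$, and the resulting maximal parabolic is not of minimal-orbit type. As a check, the hypothesis $n>3$ ensures the excluded family behaves as stated. For $n=3$ every non-maximal parabolic is the Borel, with composition $(1,1,1)$, which is of the form $(1,k,1)$ and so already excluded.

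There is no serious obstacle here. The only point needing care is the identification in the first step, namely that exactly the compositions $(1,n-1)$ and $(n-1,1)$ correspond to $\lambda=(2,1^{n-2})$ under the stated convention relating $P_\lambda$ to $\lambda^t$. I would verify this directly: $T^*\mathbb{P}^{n-1}$ resolves $\overline{\mc{O}}_{\mr{min}}$, and $\dim \mc{O}_{\mr{min}} = 2(n-1)$.
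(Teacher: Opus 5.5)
Your proof is correct, and it takes a different route from the paper's.

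The paper argues by iterated merging. Starting from the composition $(n_1,\dots,n_i)$ with $i>2$, it repeatedly replaces a smallest block and its smallest neighbour by their sum until two blocks remain. It then asserts that the result is never $(n-1,1)$ or $(1,n-1)$. You instead name one cut outright: the first cut, the last cut, or, when $c_1=c_k=1$ (which forces $k\ge 4$), the second cut. Each case is checked by a one-line inequality.

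This gives a complete and easily checked argument. It also avoids a tie-breaking issue in the paper's procedure as literally stated. For the Borel of $\lie{sl}_4$, with composition $(1,1,1,1)$, the rule permits merging the two middle blocks. That gives $(1,2,1)$, after which every merge produces $(3,1)$ or $(1,3)$.

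The paper's approach produces a chain of intermediate parabolics. The later argument, however, only uses the final inclusion $\mf{p}\subset\mf{p}_{max}$, via $\mathrm{codim}_{G/P}(G/P)^{\gamma}\ge\mathrm{codim}_{G/P_{max}}(G/P_{max})^{\gamma}$. So your direct choice loses nothing.

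Your remark that $n>3$ is redundant once $(1,k,1)$ is excluded is correct. Like the paper, which writes ``Suppose that $i>2$'', you tacitly read ``not maximal'' as also excluding $\mf{p}=\lie{sl}_n$ itself. That is the intended reading.
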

\begin{proof}
    For $\lie{sl}_n$  each parabolic $\lie{p}$ after conjugation has block upper-triangular form, corresponding to some composition $\lambda_{\lie{p}}= (n_1, \ldots, n_i)$. Suppose that $i>2$. We can then choose a smallest block $n_j$ with a smallest neighbouring block $n_{j \pm 1}$ and consider the composition $\lambda'$ which is obtained from $\lambda_{\lie{p}}$ by replacing these two parts with their sum. The composition $\lambda' = \lambda_{\lie{p'}}$ corresponds to a parabolic subalgebra ${\lie{p'}}\supset {\lie{p}}$.
    We repeat this procedure until we only have two blocks. Unless $n=3$ or $\lambda_{\lie{p}} = (1, k, 1)$  we will always end up with a composition that is not $(n-1, 1)$ or $(1, n-1)$.
\end{proof}

\begin{claim} \label{claim:2-2_terminality}
   Consider a maximal parabolic, corresponding to a composition $(n_1, n_2)$ with $n_1, n_2 \ge 2$. Suppose a (finite) group element $\gamma\in \Gamma$ acts non-trivially on $T^*(G/P_{max})$ with a fixed point $p =(gP_{max},v)$. Then the terminality condition holds for $\gamma$ acting on the tangent space 
      $\lie{g}/ g \cdot \lie{p}_{max}\oplus (\lie{g}/g \cdot \lie{p}_{max})^*$
      at $p$.
\end{claim}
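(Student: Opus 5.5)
The plan is to reduce the claim to a count of eigenvalue pairs, using the description of $\lie{g}/g\cdot\lie{p}_{max}$ set up just before the claim. First I move the fixed point $p$ to $(P_{max},v)$ using the $G$-action, replacing $\gamma$ by a conjugate. As explained above, $\gamma$ then lies in $P_{max}$, since $N_G(P_{max})=P_{max}$.

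Next I apply \cref{clm:basis-for-parabolic}. It gives a $\gamma$-eigenbasis $e_1,\dots,e_n$ of $V$, with $\gamma e_i=\xi_i e_i$, such that $V_1=\mathrm{span}(e_1,\dots,e_{n_1})$ is the subspace stabilised by $P_{max}$. Set $A=\{1,\dots,n_1\}$ and $B=\{n_1+1,\dots,n\}$, so $|A|=n_1\ge 2$ and $|B|=n_2\ge 2$. A basis of $\lie{g}/\lie{p}_{max}$ is $\{e_i\otimes e_j^* : i\in B,\ j\in A\}$, and $e_i\otimes e_j^*$ is an eigenvector of $\gamma$ with eigenvalue $\xi_i\xi_j^{-1}$. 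By the doubling property \eqref{eq:cotangent}, the terminality condition therefore amounts to finding two distinct pairs $(i,j)\in B\times A$ with $\xi_i\neq\xi_j$.

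The key input is that $\gamma$ is not a scalar matrix. Indeed, $\gamma$ acts non-trivially on $T^*(G/P_{max})$ by hypothesis, and scalars act trivially through the adjoint action. (Alternatively, $\Gamma$ meets the centre trivially and $\gamma\neq e$.) Hence not all $\xi_k$ are equal.

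First, some cross pair $(i_0,j_0)\in B\times A$ has $\xi_{i_0}\neq\xi_{j_0}$. If not, every $\xi_i$ with $i\in B$ would equal every $\xi_j$ with $j\in A$. Since $A$ and $B$ are both nonempty, all eigenvalues would then coincide, contradicting that $\gamma$ is not scalar.

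Second, I produce another pair. Pick $j_1\in A\setminus\{j_0\}$, which exists since $n_1\ge 2$.
\begin{itemize}
\item If $\xi_{i_0}\neq\xi_{j_1}$, then $(i_0,j_1)$ is a second pair.
\item Otherwise $\xi_{j_1}=\xi_{i_0}\neq\xi_{j_0}$, so $\xi_{j_0}\neq\xi_{j_1}$. Pick $i_1\in B\setminus\{i_0\}$, using $n_2\ge 2$. Then $\xi_{i_1}$ cannot equal both $\xi_{j_0}$ and $\xi_{j_1}$, so $(i_1,j_0)$ or $(i_1,j_1)$ is a pair with distinct eigenvalues. It differs from $(i_0,j_0)$ because $i_1\neq i_0$.
\end{itemize}

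This gives two eigenvectors in $\lie{g}/\lie{p}_{max}$ with non-trivial eigenvalue. Hence $\dim \lie{g}/\lie{p}_{max}-\dim(\lie{g}/\lie{p}_{max})^{\gamma}\ge 2$, and by \eqref{eq:cotangent} the codimension of the $\gamma$-fixed subspace of the tangent space at $p$ is at least $4$. This is exactly the terminality condition.

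The main obstacle is justifying that $\gamma$ is non-scalar. This is what excludes the degenerate situation, and the hypothesis $n_1,n_2\ge 2$ is used precisely in the second step. For $(n-1,1)$, a reflection-like $\gamma$ with a single distinct eigenvalue placed in the block of size one gives only one pair. This matches the exclusion of $\mc{O}_{\mr{min}}$.
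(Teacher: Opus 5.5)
Your proof is correct and follows essentially the same route as the paper. Both pass to a $\gamma$-eigenbasis adapted to the flag, use that $\gamma$ is non-scalar, and run a short pigeonhole case analysis over the lower-left block using $n_1,n_2\ge 2$; you start from a cross pair, which treats the two blocks symmetrically, whereas the paper splits on whether two distinct eigenvalues occur inside a single block.

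There is one small slip in your closing aside, which does not affect the proof. For the composition $(n-1,1)$, the bad $\gamma$ has its odd eigenvalue in the block of size $n-1$, not in the block of size one. For example, $\gamma=\mathrm{diag}(-1,1,\dots,1)$ gives exactly one pair, while putting the $-1$ in the singleton block gives $n-1$ pairs.
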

\begin{proof}
The action of $\gamma$  on $\C^n$ is diagonalisable. Since the inner automorphism induced by $\gamma$ is non-trivial, $\gamma$ does not lie in the center of $\SL_n$ and therefore has at least two distinct eigenvalues.

Consider an eigenbasis for $\gamma$ and $P_\lambda = g \cdot P_{max}$ defined in \cref{clm:basis-for-parabolic}.
Let $e_k, e_l$ be eigenvectors of $\gamma$ with distinct eigenvalues. 
   If they belong to the same block we can consider   a four-dimensional space spanned by $e_k, e_l, e_i, e_j$ where $e_i$ and $e_j$ are any two eigenvectors from another block ($i,j > k,l$). 
   If the eigenvalue $\xi_i$ is different from both $\xi_k$ and $\xi_l$ then $\gamma$ acts non-trivially on $e_{ik}$ and $e_{il}$. Otherwise we can assume that $\xi_i=\xi_k$. If $\xi_j=\xi_k$ then $\gamma$ acts non-trivially on $e_{il}$ and $e_{jl}$. If $\xi_j \ne \xi_k$ then $\gamma$ acts non-trivially on $e_{il}$ and $e_{jk}$.

If two eigenvectors with different eigenvalues always belong to different blocks this means that each block has a distinguished eigenvalue. We can then take $e_{1},e_{2}$ from the first block and $e_n$ from the second one. Then $\gamma$ acts non-trivially on $e_{n1}$ and $e_{n2}$ and thus the terminality condition holds in this case as well.

\end{proof}

     \cref{claim:2-2_terminality} does not include $n=3$ case and the case $\lambda_{\lie{p}} = (1, k, 1)$. 

    The quotient is terminal for the case $\lambda_{\lie{p}} = (1, k, 1)$. Indeed, take $\gamma$ inducing a non-identity automorphism of $G/P_{\lambda}$ and consider the action on $\lie{g}/\lie{p}_{\lambda}$. Suppose that $\xi_1\ne \xi_n$. Then $\gamma$ acts non-trivially on $e_{n1}$ and either $e_{21}$ or $e_{n2}$. If $\xi_1 = \xi_n$ then there exists $\xi_i\ne \xi_1$ and $\gamma$ acts non-trivially on $e_{i1}$ and $e_{ni}$.
The case $n=3$ follows by taking $k=1$.

    This completes the proof of \cref{thm:non-minimal_quotients_are_q-factorial_and_terminal}.
\end{proof}
\begin{remark}
    Notice that if all non-identity elements $\gamma \in \Gamma$ have at least three different eigenvalues or two eigenvalues with at least two eigenvectors each in the vector space $V$ then the proof of \cref{thm:non-minimal_quotients_are_q-factorial_and_terminal} works for the minimal nilpotent orbit and the corresponding quotient is terminal. For example, this means that a quotient by the alternating group $A_n$ will be terminal. 
\end{remark}


\printbibliography

\end{document}